\renewcommand*\env@matrix[1][\arraystretch]{%
	\edef\arraystretch{#1}%
	\hskip -\arraycolsep
	\let\@ifnextchar\new@ifnextchar
	\array{*\c@MaxMatrixCols c}}
\def\zz{\mathbb{Z}}
\def\qq{\mathbb{Q}}
\def\cc{\mathbb{C}}
\def\pp{\mathbb{P}}
\newcommand{\homoldeg}[3]{H_{#1}\left(#2; #3\right)}
\newcommand{\cohomoldeg}[3]{H^{#1}\left(#2; #3\right)}
\newcommand{\Par}[1]{\left(#1\right)}
\newcommand{\BPar}[1]{\left[#1\right]}
\newcommand{\WBPar}[1]{\left\{#1\right\}}
\newcommand{\Abs}[1]{\left|#1\right|}
\newcommand{\Bracket}[1]{\left\langle #1\right\rangle}
\theoremstyle{plain}
\newtheorem{thm}{Theorem}[section]
\theoremstyle{definition}
\newtheorem{cor}[thm]{Corollary}
\newtheorem{prop}[thm]{Proposition}
\newtheorem{lem}[thm]{Lemma}
\title{Monodromy of the families of del Pezzo and $K3$ surfaces branching over smooth quartic curves}
\author{Ad\'an Medrano Mart\'in del Campo}
\date{\today}
\begin{document}
	\maketitle
	\begin{abstract}
		Two families of surfaces arise from considering cyclic branched covers of $\mathbb{P}^{2}$ over smooth quartic curves. These consist of degree 2 del Pezzo surfaces with a $\mathbb{Z}/2\mathbb{Z}$ action and $K3$ surfaces with a $\mathbb{Z}/4\mathbb{Z}$ action. We compute the monodromy groups of both families. In the first case, we obtain the Weyl group $W\left(E_{7}\right)$, corresponding to the automorphisms of the $56$ lines contained in a degree $2$ del Pezzo surface. In the second case we obtain an arithmetic lattice: the unitary group $U\left(h_{L_{-}}\right)$ of a type $\left(1, 6\right)$ quadratic form over $\mathbb{Z}\left[i\right]$ by building on results of Kondo and Allcock, Carlson, Toledo.
	\end{abstract}
	\section{Introduction}\label{Intro}
	
	Given a smooth degree $d$ curve $C$ in $\pp^{2}$ and an integer $k\mid d$, we may associate to it a degree $k$ cyclic branch cover $S$ of $\pp^{2}$ branching over it. A natural, often difficult, question that arises from this construction is to determine the monodromy representation of the family of surfaces it induces. This has been extensively studied in the case of universal families of a given type of varieties. By considering subfamilies of the universal ones and computing their monodromy representation, one obtains a better global picture of the one associated to the universal family, as well as of its acting fundamental group (see, e.g. \cite{mcmullen}). In this paper we compute the monodromy of branched covers of $\pp^{2}$ whose branch locus is a smooth quartic curve.
	
	The parameter space of homogeneous degree $d$ polynomials  in variables $x, y, z$ is given by
	\[\pp\Par{\text{Sym}^{d}\Par{\cc^{3}}}=\pp^{N\Par{d}}\quad\text{where }N\Par{d}=\frac{d\Par{d+3}}{2}.\]
	The \emph{vanishing locus} of $f\in \pp^{N\Par{d}}$ is defined as the set $V\Par{f}=\WBPar{P\in \pp^{2}\mid f\Par{P}=0}$. The \emph{discriminant locus} is the subset $\Delta_{d}\subset \pp^{N\Par{d}}$ consisting of polynomials whose vanishing locus is singular. The parameter space of \emph{smooth} degree $d$ plane curves in $\pp^{2}$ is therefore defined as
	\[\mathcal{U}_{d}=\pp^{N\Par{d}}\setminus \Delta_{d}.\]
	Let $f\in \mathcal{U}_{d}$ and let $C=V\Par{f}$ be its vanishing locus. Then $\BPar{C}=d \BPar{H}\in \homoldeg{2}{\pp^{2}}{\zz}$, where $\BPar{H}$ is the hyperplane class in $\pp^{2}$. The curve $C$ is a complex codimension $1$ submanifold of $\pp^{2}$, so there exists a cyclic $k$-fold branched cover $X$ of $\pp^{2}$ with branched locus equal to $C$ if and only if $\BPar{C}$ is a multiple of $k$ of $\BPar{H}$ (see \cite{morita}, Proposition 4.10) and this is equivalent to $k\mid d$. It is a classical result (see \cite{zariski}) that the fundamental group of the complement of a smooth degree $d$ curve in $\pp^{2}$ is cyclic of order $d$. In light of this, for $f\in \mathcal{U}_{4}$ consider the cyclic $2$-fold and $4$-fold covers branched over $V\Par{f}$:
	
	\begin{center}
		\begin{tikzcd}
			& X_{f} \arrow[dl] \arrow[dd, "\zz/4\zz"] & \\
			\mathcal{P}_{f} \arrow[dr, "\zz/2\zz"']  & & \\
			& \pp^{2}\arrow[r, hookleftarrow] & V\Par{f} 
		\end{tikzcd}
	\end{center}
	The surface $\mathcal{P}_{f}$ is a degree $2$ del Pezzo surface, and $X_{f}$ is a $K3$ surface. We define the \emph{universal $(2, 4)$-branched and $(4, 4)$-branched covers of $\pp^{2}$} as the fiber bundles
	\[\mathcal{E}_{2, 4}=\WBPar{\Par{P, f}\in\mathcal{P}_{f}\times \mathcal{U}_{4}\mid P\in  \mathcal{P}_{f}}\quad\quad\quad \mathcal{E}_{4, 4}=\WBPar{\Par{P, f}\in X_{f}\times \mathcal{U}_{4}\mid P\in  X_{f}}\]
	given respectively by the fibrations
	
	\begin{center}
		\begin{tikzcd}
			\mathcal{P}_{f}\arrow[r, hookrightarrow] & \mathcal{E}_{2, 4}\arrow[d] & \Par{P, f} \arrow[d, mapsto] & & X_{f}\arrow[r, hookrightarrow] & \mathcal{E}_{4, 4}\arrow[d] & \Par{P, f} \arrow[d, mapsto]\\
			& \mathcal{U}_{4} & f & & & \mathcal{U}_{4} & f
		\end{tikzcd}
	\end{center}
	where the fibers $\mathcal{P}_{f}$ and $X_{f}$ are diffeomorphic to a degree $2$ del Pezzo surface and a $K3$ surface, respectively. Fixing a base point $f\in \mathcal{U}_{4}$, we simply refer to these fibers by $\mathcal{P}$ and $X$. The action of $\pi_{1}\Par{\mathcal{U}_{4}}$ on $\cohomoldeg{2}{\mathcal{P}}{\zz}$ and $\cohomoldeg{2}{X}{\zz}$ induces monodromy homomorphisms
	\[\rho_{2}:\pi_{1}\Par{\mathcal{U}_{4}}\to \text{Aut}\Par{\cohomoldeg{2}{\mathcal{P}}{\zz}}\quad\quad\quad \rho_{4}:\pi_{1}\Par{\mathcal{U}_{4}}\to \text{Aut}\Par{\cohomoldeg{2}{X}{\zz}}.\]
	The images of $\rho_{2}$ and $\rho_{4}$ can be further restricted by noting that the respective intersection forms in $\cohomoldeg{2}{\mathcal{P}}{\zz}$ and $\cohomoldeg{2}{X}{\zz}$ remain invariant under the actions  of $\rho_{2}$ and $\rho_{4}$.
	
	\begin{figure}[!htb]
		\centering
		\includegraphics[scale=0.20]{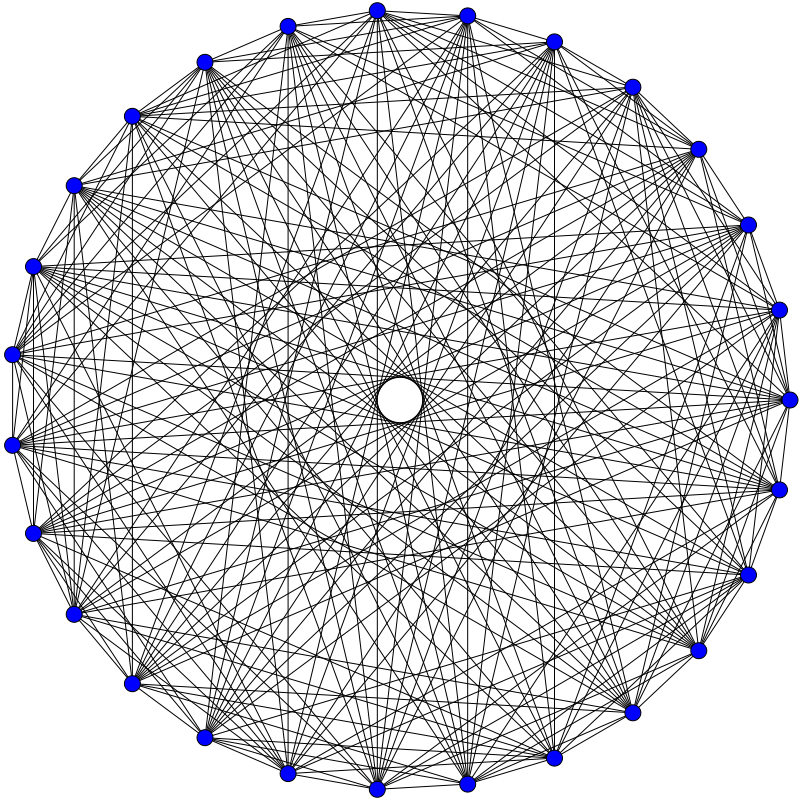}\quad\quad\quad\quad
		\includegraphics[scale=0.20]{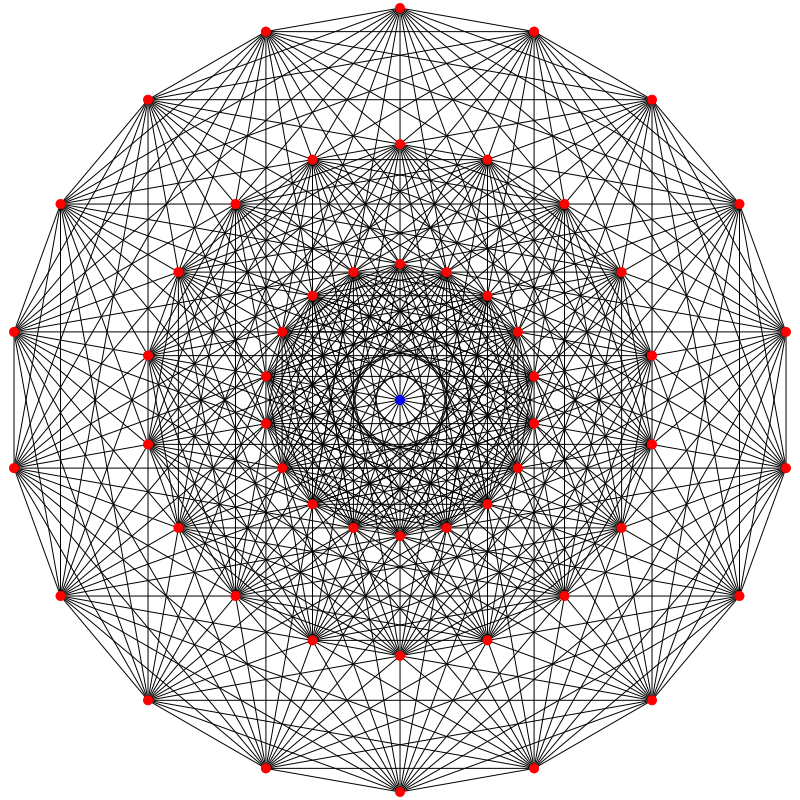}
		\caption{The Schl\"afli graph (left) and Gosset graph (right).}
		\label{SchlafliGosset}
	\end{figure}
	
	As explained in \Cref{delPezzo}, this implies that the image of $\rho_{2}$ is contained in the automorphism group of the $56$ lines contained in $\mathcal{P}$. The intersection pattern of these lines is given by the Gosset graph (see \Cref{SchlafliGosset}), whose automorphism group is $W\Par{E_{7}}$, the Weyl group of $E_{7}$. This fact restricts the image of $\rho_{2}$, so that:
	\[\rho_{2}:\pi_{1}\Par{\mathcal{U}_{4}}\to W\Par{E_{7}}.\]
	This cannot be further restricted, as this map is onto. Namely, we have the following:
	
	\begin{thm}\label{TheoremDelPezzo}
		The monodromy representation
		\[\rho_{2}:\pi_{1}\Par{\mathcal{U}_{4}}\to W\Par{E_{7}}\]
		of the universal $(2, 4)$-branched cover of $\pp^{2}$ is surjective.
	\end{thm}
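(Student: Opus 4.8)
The plan is to exploit the standard direct product decomposition $W\Par{E_{7}}\cong \WBPar{\pm 1}\times \mathrm{Sp}_{6}\Par{\ff_{2}}$, in which $-1$ acts as $-\mathrm{id}$ on the rank $7$ root lattice $K^{\perp}\cong E_{7}$. Since $-1$ has determinant $\Par{-1}^{7}=-1$, the sign homomorphism $\det:W\Par{E_{7}}\to \WBPar{\pm1}$ meets the centre trivially, and its kernel is a complement isomorphic to $\mathrm{Sp}_{6}\Par{\ff_{2}}$. Writing $G=\mathrm{Im}\Par{\rho_{2}}$ and $p:W\Par{E_{7}}\to W\Par{E_{7}}/\WBPar{\pm1}\cong \mathrm{Sp}_{6}\Par{\ff_{2}}$ for the quotient, I would reduce surjectivity to two independent inputs: that $p\circ\rho_{2}$ is onto $\mathrm{Sp}_{6}\Par{\ff_{2}}$, and that $G$ contains an element of determinant $-1$. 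Granting both, the conclusion is purely group-theoretic. If $-1\notin G$, then $\ker p\cap G=\WBPar{\pm1}\cap G$ is trivial, so $p$ restricts to an isomorphism $G\cong \mathrm{Sp}_{6}\Par{\ff_{2}}$; but then $G$ is a finite nonabelian simple group admitting the surjection $\det|_{G}:G\to \WBPar{\pm1}$, which is impossible since such a group has no subgroup of index $2$. Hence $-1\in G$, and comparing orders $\Abs{G}=2\Abs{\mathrm{Sp}_{6}\Par{\ff_{2}}}=\Abs{W\Par{E_{7}}}$ forces $G=W\Par{E_{7}}$.

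For the surjectivity of $p\circ\rho_{2}$ I would invoke the classical monodromy of bitangents. The covering involution of $\mathcal{P}\to \pp^{2}$ is the Geiser involution, which fixes $K$ and acts as $-\mathrm{id}$ on $K^{\perp}$; it therefore pairs the $56$ lines into $28$ orbits, each orbit being the preimage of a bitangent line to the quartic $V\Par{f}$. Consequently the $W\Par{E_{7}}$-action on the $56$ lines descends along $p$ to the $\mathrm{Sp}_{6}\Par{\ff_{2}}$-action on the $28$ bitangents (equivalently on the odd theta characteristics of $V\Par{f}$), and the composite $p\circ\rho_{2}$ is precisely the monodromy permutation of $\pi_{1}\Par{\mathcal{U}_{4}}$ on the bitangents of the universal smooth quartic. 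That this representation is onto $\mathrm{Sp}_{6}\Par{\ff_{2}}$ is the classical theorem on the Galois group of the $28$ bitangents, going back to Jordan and established in this form by Harris.

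For the existence of a determinant $-1$ element I would run a Picard--Lefschetz argument along the discriminant $\Delta_{4}$. The generic point of $\Delta_{4}$ parametrizes a one-nodal quartic; in local branch coordinates the smoothing is modeled by $w^{2}=xy-t$, so as $f$ degenerates the del Pezzo surface acquires an ordinary double point carrying a vanishing $\Par{-2}$-sphere $\delta$. Since the monodromy fixes $K$, the class $\delta$ is orthogonal to $K$ and hence a root of $E_{7}$, and the local monodromy around this branch of $\Delta_{4}$ is the Picard--Lefschetz reflection $x\mapsto x+\Bracket{x,\delta}\delta$. This is an order-two element of $G$ with $\det=-1$, which is exactly the second input needed above.

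The main obstacle is the first input, the surjectivity of $p\circ\rho_{2}$ onto $\mathrm{Sp}_{6}\Par{\ff_{2}}$: beyond citing the classical bitangent result, the real work is aligning the lattice-theoretic $W\Par{E_{7}}$-action on the $56$ lines with the geometric monodromy on bitangents — namely verifying that the covering (Geiser) involution indeed realizes the central $-1$ of $W\Par{E_{7}}$ and that the induced representation on the $28$ bitangents is the faithful $\mathrm{Sp}_{6}\Par{\ff_{2}}$-action. By contrast, the determinant $-1$ step is routine Picard--Lefschetz, and the final assembly is elementary once the simplicity of $\mathrm{Sp}_{6}\Par{\ff_{2}}$ is at hand.
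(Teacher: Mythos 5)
Your proof is correct, and it diverges from the paper's at the decisive step. Both arguments share the first half: reducing $\mathrm{Im}\Par{\rho_{2}}$ to one of $W\Par{E_{7}}^{+}=\ker\det$ or $W\Par{E_{7}}$ by identifying the induced action on the $28$ dual pairs of lines with the bitangent monodromy and invoking Harris's theorem that the latter is all of $\mathrm{Sp}_{6}\Par{\ff_{2}}$ — this is the paper's \Cref{LowerBound} and \Cref{2Possibilities}, and the "alignment" you flag as the real work (the Geiser involution is the central $-1$, the $56$ lines lie in dual pairs over the bitangents) is exactly what \Cref{GossetSchlafli} records. Where you part ways is in ruling out $W\Par{E_{7}}^{+}$. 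The paper does this combinatorially: it computes the stabilizer of a line to be $W\Par{E_{6}}$ (again via Harris), applies orbit--stabilizer to see the orbit of a line has size $28$ or $56$, and uses the valence of the Schl\"afli graph to force size $56$, hence $\Abs{G}=\Abs{W\Par{E_{7}}}$. You instead exhibit a single determinant $-1$ element as the Picard--Lefschetz reflection in the vanishing $\Par{-2}$-cycle of the $A_{1}$-degeneration over a generic point of $\Delta_{4}$; since $W\Par{E_{7}}^{+}=\ker\det$ is the unique index-$2$ subgroup (as $\mathrm{Sp}_{6}\Par{\ff_{2}}$ is simple), this suffices, and your case analysis on whether $-1\in G$ can be compressed to that one observation. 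Your route trades the stabilizer computation and the transitivity lemmas for a local degeneration argument; it is shorter and more geometric, yielding an explicit reflection in the monodromy. It does, however, silently use that the vanishing cycle $\delta$ is nonzero in $\cohomoldeg{2}{\mathcal{P}}{\zz}$ (otherwise $s_{\delta}=\mathrm{id}$ and $\det=+1$); this is standard — e.g.\ by simultaneous resolution of the $A_{1}$-point, or because the local monodromy already permutes the bitangents nontrivially, a fact inside Harris's proof — but it should be said. The paper's route stays entirely within the finite group theory of the line configuration and, as a byproduct, establishes transitivity on the $56$ lines.
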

	Each fiber of $\mathcal{E}_{2,4}$ comes equipped with a $\zz/2\zz$ deck group action induced by its cyclic branched cover structure. The generator $\tau$ of this action is refered to as the \emph{Geiser involution} on $\mathcal{P}$, it corresponds to the generator of the center of $W(E_{7})$ \cite{dolgachev}. As explained in \Cref{delPezzo}, the $56$ lines contained in $\mathcal{P}$ lie in pairs over each of the $28$ bitangents to the underlying quartic curve $V\Par{f}$. The quotient $W\Par{E_{7}}/\Bracket{\tau}\cong \text{Sp}_{6}\Par{\zz/2\zz}$ corresponds to the Galois group of these $28$ bitangents, as discussed in \cite{harris}.
	
	\begin{cor}\label{Geiser}
		The action of $\tau$ on $\cohomoldeg{2}{\mathcal{P}}{\zz}$  is realized as an element of the monodromy group $\text{Im}\Par{\rho_{2}}$.
	\end{cor}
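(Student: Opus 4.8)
The plan is to deduce the statement directly from the surjectivity established in \Cref{TheoremDelPezzo}, once the action of $\tau$ on $\cohomoldeg{2}{\mathcal{P}}{\zz}$ has been matched with a specific element of $W\Par{E_{7}}$. First I would recall the lattice structure set up in \Cref{delPezzo}: writing $K$ for the canonical class of $\mathcal{P}$, the group $W\Par{E_{7}}$ acts on $\cohomoldeg{2}{\mathcal{P}}{\zz}$ by fixing $K$ and acting through the reflection representation on the orthogonal complement $K^{\perp}$, which carries the $E_{7}$ root lattice. Under this identification the nontrivial central element of $W\Par{E_{7}}$ is the longest element $w_{0}$; since $-\mathrm{id}$ lies in $W\Par{E_{7}}$, this $w_{0}$ acts as $-\mathrm{id}$ on $K^{\perp}$ and fixes $K$, and $\WBPar{1, w_{0}}$ exhausts the center.

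Next I would identify the induced automorphism $\tau^{\ast}$ on $\cohomoldeg{2}{\mathcal{P}}{\zz}$. Because $\tau$ is the deck transformation of the anticanonical double cover $\mathcal{P}\to\pp^{2}$, it preserves $K$ and acts as $-\mathrm{id}$ on the primitive part $K^{\perp}$; this is precisely the statement recalled above from \cite{dolgachev} that $\tau$ corresponds to the generator of the center of $W\Par{E_{7}}$. Hence $\tau^{\ast}$ and $w_{0}$ agree on $\Bracket{K}$ and on $K^{\perp}$, and therefore on all of $\cohomoldeg{2}{\mathcal{P}}{\zz}$, so that $\tau^{\ast}=w_{0}$ as elements of $\text{Aut}\Par{\cohomoldeg{2}{\mathcal{P}}{\zz}}$.

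Finally, \Cref{TheoremDelPezzo} asserts that $\rho_{2}$ surjects onto $W\Par{E_{7}}$, so every element of $W\Par{E_{7}}$—in particular the central element $w_{0}=\tau^{\ast}$—lies in $\text{Im}\Par{\rho_{2}}$. Choosing any $\gamma\in\pi_{1}\Par{\mathcal{U}_{4}}$ with $\rho_{2}\Par{\gamma}=w_{0}$ then realizes the Geiser involution as monodromy, proving the corollary. I do not expect any genuine obstacle here: all of the difficulty is already absorbed into \Cref{TheoremDelPezzo}, and the only substantive point is the identification in the second step. Conceptually, the content of the corollary is that the a priori \emph{intrinsic} symmetry $\tau$, which is present on every fiber and so might be expected to lie outside the monodromy, is in fact visible in the global monodromy precisely because it coincides with the central element of the full group $W\Par{E_{7}}$.
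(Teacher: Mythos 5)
Your proposal is correct and follows the same route as the paper: the paper records (citing \cite{dolgachev}) that the Geiser involution acts on $\cohomoldeg{2}{\mathcal{P}}{\zz}$ as the generator of the center of $W\Par{E_{7}}$, and then deduces the corollary immediately from the surjectivity in \Cref{TheoremDelPezzo}. Your second paragraph merely makes explicit the identification of $\tau^{\ast}$ with the longest element $w_{0}$ (fixing $K$ and acting as $-\mathrm{id}$ on $K^{\perp}$), which the paper leaves to the cited reference.
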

	In order to determine the image of $\rho_{4}$, we make use of the action $T$ on $\cohomoldeg{2}{X}{\zz}$ induced by its $\zz/4\zz$ deck group action. This action determines two sublattices of $\cohomoldeg{2}{X}{\zz}$, namely:
	\[L_{+}=\left\{x\in \cohomoldeg{2}{X}{\zz}\mid T^{2}x=x\right\}\quad\quad\quad L_{-}=\left\{x\in \cohomoldeg{2}{X}{\zz}\mid T^{2}x=-x\right\}.\]
	As discussed in \Cref{Lattices}, these sublattices have discriminant $2^{8}$ and discriminant group (or \emph{glue group} as referred to in \cite{mcmullen2}) isomorphic to $\Par{\zz/2\zz}^{\oplus 8}$ and the action of $T$ on this group is induced by the Geiser involution on $\mathcal{P}$. By considering the branched cover $X\to \mathcal{P}$, we obtain the pullback map
	\[\cohomoldeg{2}{\mathcal{P}}{\zz}\to \cohomoldeg{2}{X}{\zz}\]
	and $L_{+}$ is then generated by the pullbacks of generators of $\cohomoldeg{2}{\mathcal{P}}{\zz}$. As discussed in Section 4 of \cite{artebaniSarti}, the action $T$ is purely non-symplectic, meaning that $T\omega = \pm i\omega$ on the invariant line $\cohomoldeg{2, 0}{X}{\cc}$. The N\'eron-Severi, or Picard, group $\text{NS}\Par{X}:=\cohomoldeg{2}{X}{\zz}\cap \cohomoldeg{1, 1}{X}{\cc}$ of a generic fiber $X$ is then
	\[\text{Pic}\Par{X}\cong \text{NS}\Par{X}\cong L_{+}\cong\Bracket{2}\oplus A_{1}^{\oplus 7}.\]
	This determines the Hodge decomposition on $L_{\pm}\otimes \cc$ since $L_{+}\otimes \cc\subset \cohomoldeg{1, 1}{X}{\cc}$ and therefore $L_{-}\otimes \cc$ contains the lines $\cohomoldeg{2, 0}{X}{\cc}, \cohomoldeg{0, 2}{X}{\cc}$, hence $L_{-}\otimes \cc\cong \cc\oplus \cc^{12}\oplus \cc$.
	
	The monodromy map $\rho_{2}$ then determines the action of $\rho_{4}$ on $L_{+}$, and it remains to study the action of $\rho_{4}$ on $L_{-}$. To do this, we observe that the action of $\rho_{4}$ commutes with the deck group action $T$, therefore having its image lie in the centralizer $C_{L_{-}}\Par{T}$ of $T$ in $L_{-}$. Since $L_{-}$ is equipped with an action $T$ satisfying $T^{2}+I=0$, this centralizer can be regarded as a $\zz\BPar{i}$-module. Following \cite{ACT} and \cite{kondoBook}, we obtain that as a $\zz\BPar{i}$-lattice,
	\[L_{-}\cong\Par{\zz\BPar{i}^{7}, h_{L_{-}}}\]
	where $h_{L_{-}}$ is quadratic form of signature $\Par{1, 6}$ given by (see \Cref{ZiModuleOnL-})
	\[h_{L_{-}}=-2\Par{\Abs{z_{0}}^{2}
		+\Abs{z_{1}}^{2}
		+\Abs{z_{2}}^{2}
		+\Abs{z_{3}}^{2}
		+\Abs{z_{4}}^{2}
		-\Re\Par{{z_{1}\overline{z_{2}}}+{z_{3}\overline{z_{4}}}+{z_{5}\overline{z_{6}}}}
		-\Im\Par{{z_{1}\overline{z_{2}}}+{z_{3}\overline{z_{4}}}+{z_{5}\overline{z_{6}}}}}\]
	Since $\rho_{4}$ preserves the intersection form on $L_{-}$, and therefore the hermitian form $h_{L_{-}}$, it follows that
	\[\text{Im}\Par{\rho_{4}\mid_{L_{-}}}\subseteq U\Par{h_{L_{-}}}.\]
	Using a result by \cite{kondo} on the characterization of the moduli space of smooth quartic curves as a ball complement quotient, we show that these two groups coincide. Analogous results have been used to study the monodromy and fundamental group of cubic surfaces (see, e.g. \cite{libgober}). Furthermore, as explained in \Cref{L+L-}, the lattices $L_{\pm}$ are pairwise orthogonal and therefore primitive within $\cohomoldeg{2}{X}{\zz}$. By virtue of the relation between the actions of $\rho_{4}$ on the discriminant groups of $L_{\pm}$, we obtain that if $\rho_{4}$ acts trivially on $L_{-}$ then it follows that the action on $L_{+}$ is trivial too. This implies that the monodromy map $\rho_{4}$ is completely determined by its action on the sublattice $L_{-}$, namely:
	
	\begin{thm}\label{TheoremK3}
		The monodromy representation
		\[\rho_{4}:\pi_{1}\Par{\mathcal{U}_{4}}\to U\Par{h_{L_{-}}}\]
		of the universal $(4, 4)$-branched cover of $\pp^{2}$ is surjective. In particular, its monodromy  is arithmetic.
	\end{thm}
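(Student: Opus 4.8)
The forward containment $\text{Im}\Par{\rho_4\mid_{L_-}}\subseteq U\Par{h_{L_-}}$ is already in hand, so the plan is to prove the reverse containment; the central input is Kondo's realization of the moduli space of smooth plane quartics (equivalently, of non-hyperelliptic genus three curves) as a ball quotient, built on exactly the $\zz/4\zz$-cover $K3$ surfaces $X_f$ considered here. I would first set up the period map: the Hodge decomposition on $L_-\otimes\cc$ recorded above puts $\cohomoldeg{2,0}{X}{\cc}$ on an $h_{L_-}$-positive line, so the period of $X$ lies in the complex $6$-ball $\mathbb{B}^6\subset\pp\Par{L_-\otimes\cc}$ of $h_{L_-}$-positive lines, on which $U\Par{h_{L_-}}$ acts through its quotient $PU\Par{h_{L_-}}$. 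By construction this period map is $\rho_4$-equivariant.

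Second, I would invoke Kondo's theorem, which makes the induced map an isomorphism
\[\mathcal{M}_4:=\mathcal{U}_4/PGL_3\Par{\cc}\ \xrightarrow{\ \sim\ }\ \Par{\mathbb{B}^6\setminus\mathcal{H}}/\Gamma,\qquad \Gamma=PU\Par{h_{L_-}},\]
where $\mathcal{H}$ is the hyperplane arrangement along which the cover degenerates (the image of the discriminant $\Delta_4$). Since the uniformization $\mathbb{B}^6\setminus\mathcal{H}\to\Par{\mathbb{B}^6\setminus\mathcal{H}}/\Gamma$ presents $\Gamma$ as the deck group and the period map is equivariant, the composite of $\rho_4\mid_{L_-}$ with $U\Par{h_{L_-}}\to PU\Par{h_{L_-}}$ is onto. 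To transport this from $\mathcal{M}_4$ back to $\mathcal{U}_4$, I would use that $PGL_3\Par{\cc}$ is connected and acts with finite stabilizers, so that the orbit map gives a surjection $\pi_1\Par{\mathcal{U}_4}\twoheadrightarrow\pi_1^{\mathrm{orb}}\Par{\mathcal{M}_4}$ while the $\pi_1\Par{PGL_3\Par{\cc}}=\zz/3\zz$ contribution acts trivially on $\cohomoldeg{2}{X}{\zz}$ of a generic fiber. This yields $U\Par{h_{L_-}}=\text{Im}\Par{\rho_4\mid_{L_-}}\cdot Z$, where $Z=\ker\bigl(U\Par{h_{L_-}}\to PU\Par{h_{L_-}}\bigr)$.

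It remains to capture the center $Z$. The scalars preserving $h_{L_-}$ are the norm-one units of $\zz\BPar{i}$, so $Z=\Bracket{iI}\cong\zz/4\zz$, and under the $\zz\BPar{i}$-structure of \Cref{ZiModuleOnL-}---in which $i$ acts as the deck transformation $T$---this generator is precisely $T\mid_{L_-}$. I would realize it by exhibiting the deck transformation as a monodromy operator, in the spirit of \Cref{Geiser}: the tautological $\zz/4\zz$-cover $X_f=\WBPar{w^4=f}$ depends on the defining polynomial $f$ and not merely on its class in $\pp^{N\Par{4}}$, so transporting $X_f$ around the loop $s\mapsto e^{2\pi i s}f$ returns it to itself via the deck map $w\mapsto iw$, which acts on $L_-$ as $iI$. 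This recovers $Z$ and, together with the surjection onto $PU\Par{h_{L_-}}$, gives $\text{Im}\Par{\rho_4\mid_{L_-}}=U\Par{h_{L_-}}$.

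Finally, as in \Cref{L+L-}, an element acting trivially on $L_-$ acts trivially on $L_+$ through the compatibility of the two induced actions on the common discriminant group $\Par{\zz/2\zz}^{\oplus 8}$, so $\rho_4$ is faithfully recorded by $\rho_4\mid_{L_-}$ and is therefore onto $U\Par{h_{L_-}}$; arithmeticity is then immediate, as $U\Par{h_{L_-}}$ is the integral unitary group of a signature $\Par{1,6}$ Hermitian form over $\zz\BPar{i}$, a lattice in $U\Par{1,6}$. I expect the principal difficulty to lie in converting Kondo's moduli-level isomorphism into the assertion that the monodromy image is the \emph{entire} deck group $\Gamma$---controlling the behaviour over $\mathcal{H}$ and the orbifold locus so that no proper finite-index subgroup can slip in---and in making the recovery of the central $\zz/4\zz$ rigorous, since the rescaling loop above is contractible in $\pp^{N\Par{4}}$ and the deck transformation becomes visible only once one monodromies over the affine cone of defining polynomials; reconciling this with the determinants of the Picard--Lefschetz reflections around $\Delta_4$ is the delicate bookkeeping step.
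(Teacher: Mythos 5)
Your overall architecture coincides with the paper's: reduce to the action on $L_{-}$ via the compatibility of the two actions on the common discriminant group (the paper's \Cref{reduction}, resting on \Cref{commutativeDiagram}), use Kondo's ball-quotient description together with the connectedness of the framed moduli space to get surjectivity onto $\mathbb{P}\Par{\Gamma}$ (the paper's \Cref{surjectionpi1UZi}), and handle the passage from $\pi_{1}\Par{\mathcal{U}_{4}}$ to $\pi_{1}\Par{\mathcal{M}_{3}}$ by a codimension-two excision of the locus of curves with extra automorphisms (the paper's \Cref{surjectionpi1U4}; your appeal to finite stabilizers is the informal version of that excision argument). The one step where you genuinely diverge is also the one with a real gap: your mechanism for capturing the center $Z=\Bracket{iI}$. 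The loop $s\mapsto e^{2\pi i s}f$ is the \emph{constant} loop in $\mathcal{U}_{4}\subset \pp^{N\Par{4}}$, since $e^{2\pi i s}f$ and $f$ define the same projective point for every $s$; its monodromy is therefore trivial, and no rescaling loop in the projective parameter space can produce $T$. You acknowledge this yourself, but deferring it to ``delicate bookkeeping'' around $\Delta_{4}$ leaves the surjection onto $U\Par{h_{L_{-}}}$ (as opposed to $\mathbb{P}U\Par{h_{L_{-}}}$) unproved, and this is not a minor issue: a priori the image could be a proper subgroup surjecting onto $\mathbb{P}\Par{\Gamma}$ but meeting $Z$ only in $\pm 1$.

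The paper closes this gap by a different and entirely geometric route, which you would need to import or replace. Since $\text{Im}\Par{\rho_{2}}=W\Par{E_{7}}\cong W\Par{E_{7}}^{+}\times \zz/2\zz$ (\Cref{TheoremDelPezzo}), there is a loop $\gamma$ whose $\rho_{2}$-image is the Geiser involution, i.e.\ the central element interchanging each line with its dual (\Cref{Geiser}). The paper then shows (\Cref{atMost4BitangentsConcur} through \Cref{TinMonodromy}) that because $\gamma$ fixes every bitangent and at most four bitangents concur, $\gamma$ fixes at least nine points on each bitangent, hence fixes each bitangent pointwise, hence induces the identity on $\pp^{2}$; consequently $\gamma$ must act on $\mathcal{P}$ as the deck involution $w\mapsto -w$ and on $X$ as the deck transformation $w\mapsto iw$, so $\rho_{4}\Par{\gamma}=T$ and the scalar $i\cdot I\in U\Par{h_{L_{-}}}$ is realized. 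If you substitute this argument (or any other concrete realization of $T$ in the monodromy) for your rescaling loop, the rest of your proposal goes through as written.
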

	
	\subsection{Acknowledgements}
	
	I would like to thank Benson Farb and Eduard Looijenga for their invaluable guidance, advice and support through the making of and comments on this paper, as well as to Benson for suggesting this problem. I'm grateful to Madhav Nori, Michael Artin, Trevor Hyde, Peter Huxford, Olga Medrano Mart\'in del Campo, Ishan Banerjee, Nathaniel Mayer for helpful conversations, and to the Jump Trading Mathlab Fund for providing a working space where most of said conversations could be held. Finally, I would like to thank Curtis McMullen, Igor Dolgachev and Daniel Allcock for their helpful comments to make this paper more readable.
	
	\section{Degree $2$ del Pezzo Surfaces}\label{delPezzo}
	
	\subsection{General facts}\label{GeneralFactsDelPezzo}\label{GeneralFacts}
	
	Degree $2$ del Pezzo surfaces are realized as blow-ups of $\pp^{2}$ at $7$ points in general position. Given a degree $2$ del Pezzo surface $\mathcal{P}$, its anticanonical linear system $|-K_{\mathcal{P}}|$ defines a rational map $p:\mathcal{P}\to \pp^{2}$ which has as branch locus a smooth quartic curve $V\Par{f}$. Such surface $\mathcal{P}$ contains exactly $56$ exceptional divisors corresponding to the pullbakcs of the $28$ bitangents of $V\Par{f}$. These $56$ divisors can be labeled as follows:
	
	Let $P_{1}, P_{2}, \ldots, P_{7}$ be the seven points at which $\pp^{2}$ is blown up to obtain $\mathcal{P}$ and let $L_{1}, L_{2}, \ldots,L_{7}$ be the exceptional divisors corresponding to the blow-ups at these points. Let $e_{i}$ be the Poincar\'e dual of each divisor $L_{i}$ and $e_{0}=p^{\ast}\Par{\text{PD}\BPar{H}}$ be the Poincar\'e dual of the pullback of the hyperplane class $\BPar{H}$ in $\pp^{2}$. Then the $56$ exceptional divisors contained in $\mathcal{P}$ have corresponding cohomology classes given by
	\begin{align*}
		\BPar{L_{i}}&=e_{i} &&1\leq i\leq 7\\
		\BPar{L_{i}^{\ast}}&=3e_{0}-e_{i}-(e_{1}+e_{2}+\cdots+e_{7}) \\
		\BPar{L_{i, j}}&=e_{0}-e_{i}-e_{j} &&1\leq i<j\leq 7\\
		\BPar{L_{i, j}^{\ast}}&=2e_{0}+e_{i}+e_{j}-(e_{1}+e_{2}+\cdots+e_{7})
	\end{align*}
	These $56$ classes span $\cohomoldeg{2}{\mathcal{P}}{\zz}$, whose intersection form with respect to the basis $\WBPar{e_{0}, e_{1}, \ldots, e_{7}}$ is
	\begin{align*}
		\Bracket{\cdot, \cdot}_{\mathcal{P}}:\cohomoldeg{2}{\mathcal{P}}{\zz}&\times \cohomoldeg{2}{\mathcal{P}}{\zz}\to \zz &
		\Par{\mathbf{a}, \mathbf{b}}&\mapsto \mathbf{a}^{T}\begin{pmatrix}
			1 & 0 \\
			0 & -I_{7} \\
		\end{pmatrix}\mathbf{b}.
	\end{align*}
	
	\subsection{The Gosset and Schl\"afli graphs}\label{GossetSchlafli}
	
	As explained in Section $8$ of \cite{dolgachev}, the classes of the $56$ exceptional divisors of $\mathcal{P}$ are in correspondence with the vertices of the Gosset graph, where two vertices share an edge if and only if the inner product of the corresponding cohomology classes is equal to $1$. Moreover, the induced subgraph given by the vertices adjacent to a given vertex form a graph isomorphic to the Schl\"{a}fli graph, which gives the intersection pattern of the $27$ lines contained in a smooth cubic surface. The automorphim group of the Schl\"afli and Gosset graphs are the Weyl groups $W\Par{E_{6}}$ and $W\Par{E_{7}}$ respectively. A depiction of both graphs is shown in \Cref{SchlafliGosset}.
	
	
	This phenomenon is fundamentally related to the following geometric construction in \cite{harris}. A degree $2$ del Pezzo surface $\mathcal{P}$  may be regarded as a blow-up of a cubic surface $S$ over a point $Q$ not contained in any of its $27$ lines. The projections of the strict transforms of the $27$ lines contained in $S$ and the exceptional divisor over $Q$ via $p:\mathcal{P}\to\pp^{2}$ are in correspondence with the $28$ bitangents of the quartic curve $V\Par{f}\subset \pp^{2}$. Moreover, the $56$ exceptional divisors contained in $\mathcal{P}$ lie over each bitangent of $V\Par{f}$ in pairs. Each pair of such exceptional divisors has inner product $2$ with respect to the intersection form in $\cohomoldeg{2}{\mathcal{P}}{\zz}$ and they are precisely the pairs
	\[\WBPar{\Par{\BPar{L_{i}}, \BPar{L_{i}^{\ast}}}\mid 1\leq i\leq 7}\quad\text{ and }\quad\WBPar{\Par{\BPar{L_{i, j}}, \BPar{L_{i, j}^{\ast}}}\mid 1\leq i<j\leq 7}.\]
	We call each pair of such lines \emph{dual} and we denote the dual of a line $L$ by $L^{\ast}$. Let $L_{Q}$ be the exceptional divisor over $Q$, $\mathcal{S}$ be the set of strict transforms of the $27$ lines in the cubic surface $S$, and $\mathcal{S}^{\ast}$ be the set of dual lines to those in $\mathcal{S}$. A direct computation shows that
	\[\Bracket{\BPar{L_{Q}}, \BPar{L}}_{\mathcal{P}}=\begin{cases}
		0 & L\in \mathcal{S}\\
		1 & L\in \mathcal{S}^{\ast}
	\end{cases}\]
	Moreover, the intersection pattern of the lines in $\mathcal{S}$ and in $\mathcal{S}^{\ast}$ is given by the Schl\"afli graph. Hence, for any line $L$ contained in $\mathcal{P}$ we may define the following sets of $27$ lines associated to $L$:
	\[\mathcal{S}_{L}=\WBPar{L'\subset \mathcal{P}\mid \Bracket{\BPar{L}, \BPar{L'}}_{\mathcal{P}}=0}\quad\quad\quad \mathcal{S}^{\ast}_{L}=\WBPar{L'\subset \mathcal{P}\mid \Bracket{\BPar{L}, \BPar{L'}}_{\mathcal{P}}=1}\]
	where $\mathcal{S}_{L^{\ast}}=\mathcal{S}^{\ast}_{L}$. With this notation, we also have that
	\[L\in \mathcal{S}_{L'}\iff L'\in \mathcal{S}_{L}\quad\text{ and }\quad L\in \mathcal{S}^{\ast}_{L'}\iff L'\in \mathcal{S}^{\ast}_{L}.\]
	
	\section{Computing $\text{Im}\left(\rho_{2}\right)$}\label{Rho2}
	
	In this section we prove \Cref{TheoremDelPezzo}, namely, we show that
	\[\text{Im}\Par{\rho_{2}: \pi_{1}\Par{\mathcal{U}_{4}}\to \text{Aut}\Par{\cohomoldeg{2}{\mathcal{P}}{\zz}}}\cong W\Par{E_{7}}.\]
	The proof consists of $3$ main steps: computing the stabilizer of a line $L$ in $\mathcal{P}$, restricting the $\text{Im}\Par{\rho_{2}}$ and showing that $\text{Im}\Par{\rho_{2}}$ acts transitively on th $56$ lines contained in $\mathcal{P}$.
	
	\subsection{Stabilizer of a line $L$}\label{StabLine}
	
	In \cite{harris}, Harris studies the monodromy groups of the $28$ bitangents of a smooth quartic and the $27$ lines in a smooth cubic surface are studied. It is a classical result of Klein and Jordan (see \cite{harris} for a proof) that the monodromy group of $27$ lines in a cubic surface $S$ is isomorphic to
	\[W\Par{E_{6}}\cong O^{-}_{6}\Par{\zz/2\zz}.\]
	The monodromy group of the $28$ bitangents of a smooth quartic curve $V\Par{f}$ is isomorphic to
	\[W\Par{E_{7}}^{+}\cong O_{6}\Par{\zz/2\zz}\cong \text{Sp}_{6}\Par{\zz/2\zz}\]
	and moreover, we have that $W\Par{E_{7}}\cong W\Par{E_{7}}^{+}\times \zz/2\zz$. It follows that an element of the monodromy group of the $28$ bitangents of a quartic curve that fixes one bitangent is determined by its monodromy action on the $27$ lines on $S$ corresponding to the remaining bitangents. It is then shown in \cite{harris} that automorphisms of the $27$ lines in $S$ preserving a set of $6$ of these lines generate $W\Par{E_{6}}$ and can be realized as the monodromy action given by a path in the parameter space of smooth quartic curves $\mathcal{U}_{4}$, implying the following key lemma.
	\begin{lem}[Harris, \cite{harris}, Section II.4]\label{StabBitangent}
		The stabilizer of any bitangent in the monodromy group of  the $28$ bitangents of a smooth quartic curve is isomorphic to $W\Par{E_{6}}$.
	\end{lem}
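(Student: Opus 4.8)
The plan is to reduce the statement about bitangents directly to the classical monodromy results for the 27 lines on a cubic surface, using the geometric dictionary established in the previous section. Recall that $W(E_7)\cong W(E_7)^{+}\times \zz/2\zz$, where $W(E_7)^{+}\cong \text{Sp}_6(\zz/2\zz)$ is the monodromy group acting on the 28 bitangents, and the $\zz/2\zz$ factor is generated by the Geiser involution $\tau$, which acts trivially on the set of bitangents (it only swaps the two exceptional divisors lying over each bitangent). So the stabilizer of a bitangent $b$ inside the monodromy group of the 28 bitangents is a subgroup of $W(E_7)^{+}\cong \text{Sp}_6(\zz/2\zz)$.

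First I would fix a bitangent $b$ and use the construction from \Cref{GossetSchlafli}: realizing $\mathcal{P}$ as the blow-up of a cubic surface $S$ at a point $Q$ off all 27 lines, the bitangent $b$ corresponds to the exceptional divisor $L_Q$, and the remaining 27 bitangents are in bijection with the 27 lines of $S$. Their intersection pattern is the Schläfli graph, whose automorphism group is $W(E_6)$. Thus any element of $\text{Sp}_6(\zz/2\zz)$ fixing $b$ permutes the remaining 27 bitangents while preserving this incidence structure, giving an inclusion $\text{Stab}(b)\hookrightarrow \text{Aut}(\text{Schläfli})\cong W(E_6)$. This realizes the stabilizer as (a subgroup of) $W(E_6)$; the orbit–stabilizer count $|W(E_7)^{+}|=|\text{Sp}_6(\zz/2\zz)|=1451520=28\cdot 51840=28\cdot |W(E_6)|$ confirms that transitivity of the bitangent-monodromy on the 28 bitangents forces $|\text{Stab}(b)|=|W(E_6)|$, so the inclusion is an equality of groups.

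Next I would establish that the inclusion is surjective onto $W(E_6)$ by exhibiting enough monodromy. This is precisely the content invoked from Harris: one shows that automorphisms of the 27 lines preserving a chosen set of 6 mutually skew lines (a ``sixer'') generate all of $W(E_6)$, and that each such automorphism is realizable as the monodromy of an explicit loop in $\mathcal{U}_4$ that fixes the bitangent $b$. Concretely, degenerating the quartic so that one bitangent is held fixed corresponds to degenerating $S$ in its own parameter space, and the classical Klein–Jordan computation that the 27-line monodromy is all of $W(E_6)\cong O_6^{-}(\zz/2\zz)$ transports back to loops fixing $b$. Combining the upper bound from the cardinality count with this lower bound from explicit monodromy gives $\text{Stab}(b)\cong W(E_6)$.

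\textbf{The main obstacle} is the last step: verifying that the full $W(E_6)$-worth of automorphisms of the 27 lines is genuinely realized by loops in $\mathcal{U}_4$ \emph{that fix the bitangent $b$}, rather than merely by abstract graph automorphisms. This is where one must lean on Harris's analysis rather than a purely combinatorial argument, since transitivity plus the cardinality bound only show the stabilizer has the right order and embeds in $W(E_6)$ — but a priori a proper subgroup of the same order does not exist here, so in fact the order count already forces equality once the embedding is established. The genuinely substantive input, supplied by \Cref{StabBitangent} and the cited work, is that the blow-up correspondence between the bitangent configuration of $V(f)$ and the 27-line configuration of $S$ is compatible with monodromy, so that loops fixing $b$ induce exactly the Klein–Jordan monodromy on the remaining 27 lines. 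Given that compatibility, the identification $\text{Stab}(b)\cong W(E_6)$ follows immediately.
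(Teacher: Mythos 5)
Your argument is correct and follows essentially the same route as the paper, which likewise treats this as a consequence of Harris's results: the monodromy group of the $28$ bitangents is $W\Par{E_{7}}^{+}\cong \text{Sp}_{6}\Par{\zz/2\zz}$, an element fixing a bitangent is determined by its action on the $27$ lines of the associated cubic surface (giving the embedding into $W\Par{E_{6}}$), and Harris's realizability of the $27$-line monodromy supplies the reverse containment. Your orbit--stabilizer count $1451520=28\cdot 51840$ is a clean supplementary way to force equality once the embedding is in place, but both proofs ultimately rest on the same cited input from \cite{harris}.
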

	Now we turn to the monodromy action of interest on the $56$ lines in $\mathcal{P}$, proving the following.
	\begin{prop}\label{StabDelPezzo}
		The stabilizer in $\text{Im}\Par{\rho_{2}}$ of any line in $\mathcal{P}$  is isomorphic to $W\Par{E_{6}}$.
	\end{prop}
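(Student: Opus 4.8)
The plan is to derive the line-stabilizer from Harris's bitangent-stabilizer (\Cref{StabBitangent}) while carefully accounting for the Geiser involution $\tau$. Write $G=\text{Im}\Par{\rho_{2}}\subseteq W\Par{E_{7}}$. Using the splitting $W\Par{E_{7}}\cong W\Par{E_{7}}^{+}\times\Bracket{\tau}$ recorded above, consider the projection $\pi:W\Par{E_{7}}\to W\Par{E_{7}}^{+}\cong\text{Sp}_{6}\Par{\zz/2\zz}$. As explained in the excerpt, this is precisely the action on the $28$ bitangents: since $\tau$ sends each line $L$ to its dual $L^{\ast}$, the quotient by $\Bracket{\tau}$ identifies each dual pair $\WBPar{L,L^{\ast}}$ with its underlying bitangent. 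By Harris, $\pi\Par{G}=\text{Sp}_{6}\Par{\zz/2\zz}$ is the full monodromy of the $28$ bitangents.

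Fix a line $L\subseteq\mathcal{P}$ and set $G_{L}=\text{Stab}_{G}\Par{\BPar{L}}$. As recorded in \Cref{delPezzo}, the inner product $\Bracket{\BPar{L},\BPar{L'}}_{\mathcal{P}}$ equals $2$ exactly when $L'=L^{\ast}$, so the dual is recovered from the class $\BPar{L}$; consequently any $g\in G$ fixing $\BPar{L}$ also fixes $\BPar{L^{\ast}}$, and $g$ preserves the pair $\WBPar{L,L^{\ast}}$ if and only if $\pi\Par{g}$ fixes the corresponding bitangent. First I would show that $\pi\mid_{G_{L}}$ is injective: its kernel is $G_{L}\cap\Bracket{\tau}$, which is trivial because $\tau$ interchanges $L$ and $L^{\ast}$ and hence never fixes $\BPar{L}$. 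Thus $\pi$ embeds $G_{L}$ into $\text{Stab}_{\text{Sp}_{6}}\Par{\text{bitangent}}\cong W\Par{E_{6}}$.

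It remains to prove that $\pi\mid_{G_{L}}$ is onto $W\Par{E_{6}}$, and this is where the work lies. I would invoke Harris's geometric realization directly. Contracting the $\Par{-1}$-curve $L$ exhibits $\mathcal{P}$ as a blow-up $\text{Bl}_{Q}\Par{S}$ of a cubic surface $S$ at a point $Q$, with $L=L_{Q}$ the exceptional divisor and the $27$ lines of $S$ corresponding to the remaining $27$ bitangents. Harris produces, for each generator of the $W\Par{E_{6}}$ acting on these $27$ lines, an explicit loop in $\mathcal{U}_{4}$ realizing it as bitangent-monodromy; these loops deform $S$ while keeping $Q$ and its exceptional divisor $L_{Q}$ fixed throughout, so the induced action on $\cohomoldeg{2}{\mathcal{P}}{\zz}$ fixes $\BPar{L_{Q}}$ (hence $\BPar{L_{Q}^{\ast}}$) rather than swapping the pair. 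Therefore each loop lies in $G_{L}$, their images generate $W\Par{E_{6}}$, and $\pi\Par{G_{L}}=W\Par{E_{6}}$. With injectivity this yields $G_{L}\cong W\Par{E_{6}}$. The argument applies to any line, since every line of $\mathcal{P}$ is a $\Par{-1}$-curve whose contraction presents $\mathcal{P}$ as the blow-up of a cubic surface, and since $\text{Sp}_{6}\Par{\zz/2\zz}$ acts transitively on the $28$ bitangents.

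The hard part is exactly this surjectivity step. \Cref{StabBitangent} only controls stabilizers of bitangents, i.e. of \emph{dual pairs}, so a priori it leaves a $\Bracket{\tau}$-ambiguity: a monodromy element could fix the pair $\WBPar{L,L^{\ast}}$ while interchanging $L$ and $L^{\ast}$, in which case $\pi\mid_{G_{L}}$ would hit only an index-$2$ subgroup of $W\Par{E_{6}}$. Ruling this out — equivalently, certifying that Harris's realizing loops preserve the \emph{individual} exceptional divisor $L_{Q}$ and not merely its bitangent — is the crux, and it is what upgrades the factor-of-two-ambiguous pair statement to the exact line statement $G_{L}\cong W\Par{E_{6}}$.
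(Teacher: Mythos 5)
Your proof is correct and follows essentially the same route as the paper: inject the stabilizer of $L$ into the stabilizer of its underlying bitangent via the duality $L\leftrightarrow L^{\ast}$, and realize all of $W\left(E_{6}\right)$ through Harris's monodromy loops. The one place you go beyond the paper's own (terser) proof is in explicitly isolating and resolving the $\langle\tau\rangle$-ambiguity --- that a loop stabilizing the bitangent could a priori swap $L$ and $L^{\ast}$, yielding only an index-$2$ subgroup --- which you settle by observing that Harris's realizing family of pointed cubic surfaces carries the exceptional divisor $L_{Q}$ continuously and hence fixes its class individually; the paper leaves this step implicit.
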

	\begin{proof}
		Suppose $g\in \pi_{1}\Par{\mathcal{U}_{4}}$ lies in the stabilizer of a line $L$ in $\mathcal{P}$. Then $g$ must lie in the stabilizer of its dual line $L^{\ast}$. Moreover, since the intersection form is preserved by the monodromy action, the set $\mathcal{S}_{L}$  must be permuted, and its permutation completely determines that of $\mathcal{S}^{\ast}_{L}$, hence the action of $g$. The automorphism group of $\mathcal{S}_{L}$ is then isomorphic to the automorphism group of their underlying bitangents, and by \Cref{StabBitangent} this is precisely the stabilizer of the bitangent lying under $L$, which is isomorphic to $W\Par{E_{6}}$.
	\end{proof}
	
	\subsection{Restricting $\text{Im}\Par{\rho_{2}}$}\label{Restriction}
	
	We proceed now to reduce $\text{Im}\Par{\rho_{2}}$ to two possibilities with the aid of the following two lemmas.
	
	\begin{lem}\label{UpperBound}
		$\text{Im}\Par{\rho_{2}}$ is contained in $W\Par{E_{7}}$.
	\end{lem}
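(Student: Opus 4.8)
The plan is to use the two properties of $\rho_{2}$ already isolated in the excerpt: every $\rho_{2}\Par{g}$ is an isometry of $\Par{\cohomoldeg{2}{\mathcal{P}}{\zz}, \Bracket{\cdot,\cdot}_{\mathcal{P}}}$, and it permutes the $56$ classes of exceptional divisors, which span $\cohomoldeg{2}{\mathcal{P}}{\zz}$. The first property is stated in the introduction; the second is geometric, coming from the fact that in the family $\mathcal{E}_{2,4}\to\mathcal{U}_{4}$ the $56$ lines of each fiber assemble into a $56$-sheeted covering of $\mathcal{U}_{4}$ (equivalently, the branches over the $28$ bitangents studied by Harris), so parallel transport along a loop permutes them. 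Granting these, I would argue as follows.

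Since $\rho_{2}\Par{g}$ is an isometry permuting the $56$ lines, it preserves the relation $\Bracket{\BPar{L},\BPar{L'}}_{\mathcal{P}}=1$, i.e.\ it preserves adjacency in the Gosset graph. Hence the induced permutation $\sigma$ of the $56$ vertices lies in $\text{Aut}\Par{\text{Gosset}}\cong W\Par{E_{7}}$, where the isomorphism is realized by the natural action of $W\Par{E_{7}}$ on $\cohomoldeg{2}{\mathcal{P}}{\zz}$ fixing $K_{\mathcal{P}}$ and permuting the $56$ line classes (Section $8$ of \cite{dolgachev}). By surjectivity of $W\Par{E_{7}}\to\text{Aut}\Par{\text{Gosset}}$ there is $w\in W\Par{E_{7}}$ inducing the same permutation $\sigma$; then $\rho_{2}\Par{g}$ and $w$ both send $\BPar{L}\mapsto\BPar{\sigma\Par{L}}$ for all $56$ lines, and since these classes span $\cohomoldeg{2}{\mathcal{P}}{\zz}$ the two automorphisms must agree. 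Thus $\rho_{2}\Par{g}=w\in W\Par{E_{7}}$, giving $\text{Im}\Par{\rho_{2}}\subseteq W\Par{E_{7}}$.

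The step carrying the real content is justifying that $\rho_{2}$ permutes the $56$ line classes, equivalently that it fixes the canonical class $K_{\mathcal{P}}$. I would record two independent ways to see this. Geometrically, the relative canonical class of the holomorphic family $\mathcal{E}_{2,4}\to\mathcal{U}_{4}$ is globally defined and restricts to $K_{\mathcal{P}}$ on each fiber, so it is monodromy-invariant. Combinatorially, once one knows the lines are permuted, the $W\Par{E_{7}}$-invariant vector $\sum_{L}\BPar{L}$ must be proportional to $K_{\mathcal{P}}$ (as $W\Par{E_{7}}$ acts irreducibly on $K_{\mathcal{P}}^{\perp}\otimes\qq$, with no nonzero fixed vector), and pairing with $-K_{\mathcal{P}}$ yields $\sum_{L}\BPar{L}=-28K_{\mathcal{P}}$; fixing the set of lines therefore forces $\rho_{2}$ to fix $K_{\mathcal{P}}$. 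Either way, the abstract content of the lemma is the classical identification of the isometries of $\cohomoldeg{2}{\mathcal{P}}{\zz}$ fixing $K_{\mathcal{P}}$ with $W\Par{E_{7}}$: the orthogonal complement $K_{\mathcal{P}}^{\perp}$ is the negative definite root lattice $E_{7}\Par{-1}$, and since $E_{7}$ has no nontrivial diagram automorphisms and $-\mathrm{id}\in W\Par{E_{7}}$, one has $O\Par{E_{7}}=W\Par{E_{7}}$; as $\rho_{2}\Par{g}$ fixes $K_{\mathcal{P}}$ it is determined by its restriction to $K_{\mathcal{P}}^{\perp}$ and hence lands in $W\Par{E_{7}}$. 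I expect the main obstacle to be phrasing the monodromy-invariance of $K_{\mathcal{P}}$ (or of the set of $56$ lines) precisely, rather than any of the subsequent lattice bookkeeping.
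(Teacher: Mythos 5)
Your argument is essentially the paper's own: the monodromy preserves the intersection form and permutes the $56$ line classes, which span $\cohomoldeg{2}{\mathcal{P}}{\zz}$, so each $\rho_{2}\Par{g}$ is determined by an automorphism of the Gosset graph and hence lies in $W\Par{E_{7}}$. The only difference is that you explicitly justify the step the paper leaves implicit --- that the $56$ lines are in fact permuted (equivalently that $K_{\mathcal{P}}$ is monodromy-invariant) --- and both of your justifications are correct.
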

	\begin{proof}
		We have seen that the automorphism group of the Gosset graph is isomorphic to $W\Par{E_{7}}$ and the intersection pattern of the $56$ lines in $\mathcal{P}$ is given by this graph. Since the monodromy action preserves the intersection form in $\cohomoldeg{2}{\mathcal{P}}{\zz}$ and the classes of the $56$ lines in $\mathcal{P}$ generate $\cohomoldeg{2}{\mathcal{P}}{\zz}$, it follows that the monodromy action of any element in $\pi_{1}\Par{\mathcal{U}_{4}}$ is determined by an automorphism of the Gosset graph, thus giving the desired restriction.
	\end{proof}
	\begin{lem}\label{LowerBound}
		$\text{Im}\Par{\rho_{2}}$ contains $W\Par{E_{7}}^{+}$.
	\end{lem}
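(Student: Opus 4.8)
The plan is to show that the monodromy group contains $W(E_7)^+ \cong \mathrm{Sp}_6(\zz/2\zz)$ by combining the stabilizer computation of \Cref{StabDelPezzo} with the transitivity of the action on the $56$ lines, using the standard principle that a transitive permutation group whose point-stabilizer is large must itself be large. Concretely, I would first establish that $\text{Im}(\rho_2)$ acts transitively on the $28$ bitangents of $V(f)$. This should follow from \Cref{StabBitangent} together with Harris's result that the full monodromy group of the $28$ bitangents is $W(E_7)^+ \cong \mathrm{Sp}_6(\zz/2\zz)$, which acts transitively on the $28$ bitangents; since every such monodromy element is realized by a loop in $\mathcal{U}_4$, the induced action of $\text{Im}(\rho_2)$ on bitangents is exactly this transitive action.

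The key step is then to lift transitivity on the $28$ bitangents to the statement that $\text{Im}(\rho_2) \supseteq W(E_7)^+$. The orbit-stabilizer relation gives
\[
\Abs{\text{Im}(\rho_2)} \;\geq\; 28 \cdot \Abs{W(E_6)} \;=\; \Abs{W(E_7)^+},
\]
using $\Abs{W(E_7)^+} = \Abs{W(E_7)}/2 = 28\cdot\Abs{W(E_6)}$, which follows from the index formula $[W(E_7):W(E_6)] = 56$ and $W(E_7) \cong W(E_7)^+ \times \zz/2\zz$. This bounds the order from below, but a cardinality bound alone does not identify the subgroup; I would therefore argue at the level of the action on bitangents. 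The induced homomorphism $\overline{\rho}_2 : \text{Im}(\rho_2) \to \mathrm{Sp}_6(\zz/2\zz) \cong W(E_7)^+$ (the quotient by the Geiser involution $\Bracket{\tau}$, as in \Cref{Geiser}) is surjective precisely because its image is generated by the bitangent-monodromy transformations which Harris shows generate $W(E_7)^+$. Since $\text{Im}(\rho_2)$ surjects onto $W(E_7)^+$ under the quotient map $W(E_7) \to W(E_7)/\Bracket{\tau} \cong W(E_7)^+$, and since a complement to $\Bracket{\tau}$ in $W(E_7) \cong W(E_7)^+ \times \zz/2\zz$ is isomorphic to $W(E_7)^+$, the preimage contains a copy of $W(E_7)^+$.

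More carefully, I would realize generators of $W(E_7)^+$ directly: each bitangent-monodromy element of Harris lifts to a monodromy element of the family $\mathcal{E}_{2,4}$ acting on $\cohomoldeg{2}{\mathcal{P}}{\zz}$, and these lifts generate a subgroup mapping isomorphically onto $W(E_7)^+$ under the quotient by $\Bracket{\tau}$. Because the decomposition $W(E_7) \cong W(E_7)^+ \times \Bracket{\tau}$ splits, this subgroup is itself isomorphic to $W(E_7)^+$ and lies inside $\text{Im}(\rho_2)$. The main obstacle I anticipate is the bookkeeping in the last step: ensuring that the lifts of the generating bitangent transformations can be chosen consistently so that they generate a genuine complement to $\Bracket{\tau}$ rather than a subgroup that only surjects onto $W(E_7)^+$ with a nontrivial contribution from $\tau$. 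This requires checking that the specific monodromy loops used by Harris to generate $W(E_6)$-worth of symmetries (and to move between bitangents) act on the full set of $56$ lines by permutations whose lift to $W(E_7)$ avoids the central involution — equivalently, that these generators lie in $W(E_7)^+$ and not in the nontrivial coset. I expect this to follow from the fact that $W(E_7)^+$ is exactly the subgroup preserving the partition of the $56$ lines into $28$ dual pairs with a fixed orientation, a structure that the monodromy manifestly respects.
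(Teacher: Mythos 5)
Your overall route is the same as the paper's: both arguments take Harris's theorem that the monodromy group of the $28$ bitangents is $W\Par{E_{7}}^{+}\cong \mathrm{Sp}_{6}\Par{\zz/2\zz}$, observe that the action of $\text{Im}\Par{\rho_{2}}$ on the $28$ dual pairs of lines realizes exactly this group, and conclude that $\text{Im}\Par{\rho_{2}}$ contains $W\Par{E_{7}}^{+}$. To your credit, you isolate the one genuinely nontrivial point, which the paper's own proof passes over without comment: knowing that $\text{Im}\Par{\rho_{2}}$ surjects onto $W\Par{E_{7}}/\Bracket{\tau}\cong W\Par{E_{7}}^{+}$ does not by itself place the subgroup $W\Par{E_{7}}^{+}$ inside $\text{Im}\Par{\rho_{2}}$, since a priori the image could be the graph of a homomorphism $W\Par{E_{7}}^{+}\to\Bracket{\tau}$.

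However, the justification you propose for closing this gap rests on a false claim. You assert that $W\Par{E_{7}}^{+}$ is ``exactly the subgroup preserving the partition of the $56$ lines into $28$ dual pairs with a fixed orientation,'' i.e.\ preserving a choice of one line from each pair. In fact all of $W\Par{E_{7}}$ preserves the partition into dual pairs (these are precisely the pairs with intersection number $2$), and $W\Par{E_{7}}^{+}$ acts \emph{transitively} on the $56$ lines, so it preserves no such choice: the stabilizer $W\Par{E_{6}}$ of a line is generated by reflections, which have determinant $-1$ and therefore do not lie in $W\Par{E_{7}}^{+}$, so $W\Par{E_{6}}\cap W\Par{E_{7}}^{+}$ has index $2$ in $W\Par{E_{6}}$ and the $W\Par{E_{7}}^{+}$-orbit of a line has size $56$. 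The correct way to close the gap is purely group-theoretic: since $W\Par{E_{7}}\cong W\Par{E_{7}}^{+}\times\Bracket{\tau}$ and $W\Par{E_{7}}^{+}\cong \mathrm{Sp}_{6}\Par{\zz/2\zz}$ is simple, hence perfect, there is no nontrivial homomorphism $W\Par{E_{7}}^{+}\to\zz/2\zz$, so any subgroup of $W\Par{E_{7}}$ projecting onto the first factor must contain $W\Par{E_{7}}^{+}\times\WBPar{1}$. With that substitution your argument is complete (and more explicit than the paper's own proof on exactly this point); the orbit--stabilizer count in your second paragraph is correct but, as you note yourself, not sufficient on its own and not needed.
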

	\begin{proof}
		The action of $\text{Im}\Par{\rho_{2}}$ on the $56$ lines in $\mathcal{P}$ preserves the $28$ pairs of lines lying over each bitangent of $V\Par{f}$. Recall that the automorphism group of the $28$ bitangents of $V\Par{f}$ is isomorphic to $W\Par{E_{7}}^{+}$. As noted in \Cref{StabDelPezzo} above, any such automorphism is realized by an element of $\pi_{1}\Par{\mathcal{U}_{4}}$, which in turn induces an automorphism of the $28$ pairs of lines in $\mathcal{P}$. This implies the desired contention.
	\end{proof}
	
	Since $W\Par{E_{7}}^{+}$ is a subgroup of index $2$ in $W\Par{E_{7}}$, \Cref{UpperBound} and \Cref{LowerBound} imply the following:
	\begin{prop}\label{2Possibilities}
		$\text{Im}\Par{\rho_{2}}$ is isomorphic either to $W\Par{E_{7}}^{+}$ or to $W\Par{E_{7}}$.
	\end{prop}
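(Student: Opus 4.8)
The plan is to combine the two bounds just established and then apply an elementary index argument. By \Cref{UpperBound} we have $\text{Im}\Par{\rho_{2}}\subseteq W\Par{E_{7}}$ as subgroups, and by \Cref{LowerBound} we have $W\Par{E_{7}}^{+}\subseteq \text{Im}\Par{\rho_{2}}$, so together
\[W\Par{E_{7}}^{+}\subseteq \text{Im}\Par{\rho_{2}}\subseteq W\Par{E_{7}}.\]
It then remains to observe that the only subgroups of $W\Par{E_{7}}$ sandwiched between $W\Par{E_{7}}^{+}$ and $W\Par{E_{7}}$ are these two groups themselves.

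To see this I would invoke multiplicativity of indices. Setting $G=W\Par{E_{7}}$, $H=W\Par{E_{7}}^{+}$ and $K=\text{Im}\Par{\rho_{2}}$, the relation $\BPar{G:H}=\BPar{G:K}\BPar{K:H}$ together with $\BPar{G:H}=2$ forces $\BPar{G:K}\BPar{K:H}=2$. Since both factors are positive integers, either $\BPar{K:H}=1$, giving $K=H$, or $\BPar{G:K}=1$, giving $K=G$. These are precisely the two alternatives in the statement, so the proposition follows. Note that normality of $W\Par{E_{7}}^{+}$ is not needed here---only that it has index $2$---although it is in fact normal, being a direct factor in $W\Par{E_{7}}\cong W\Par{E_{7}}^{+}\times \zz/2\zz$.

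I do not expect any genuine obstacle at this step: all of the substantive work has already been packaged into \Cref{UpperBound} and \Cref{LowerBound}. The real difficulty lies in the subsequent stage, namely deciding which of the two possibilities actually holds and thereby promoting this proposition to \Cref{TheoremDelPezzo}. Since the nontrivial $\zz/2\zz$ factor of $W\Par{E_{7}}$ is generated by the Geiser involution $\tau$, excluding the alternative $\text{Im}\Par{\rho_{2}}\cong W\Par{E_{7}}^{+}$ amounts to exhibiting a single monodromy element lying outside $W\Par{E_{7}}^{+}$---equivalently, realizing $\tau$ in the monodromy group. I anticipate that this is where the transitivity of $\text{Im}\Par{\rho_{2}}$ on the $56$ lines, combined with the stabilizer computation of \Cref{StabDelPezzo}, will do the work.
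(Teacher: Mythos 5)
Your proof is correct and matches the paper's argument exactly: the paper deduces \Cref{2Possibilities} from \Cref{UpperBound} and \Cref{LowerBound} precisely by noting that $W\Par{E_{7}}^{+}$ has index $2$ in $W\Par{E_{7}}$, so the sandwiched subgroup must be one of the two. Your closing remarks about how the ambiguity is later resolved also anticipate the paper's actual strategy (transitivity on the $56$ lines plus the stabilizer computation), though that is not part of this proposition's proof.
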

	
	\subsection{Transitivity on the lines contained in $\mathcal{P}$}
	
	Finally, we see $\text{Im}\Par{\rho_{2}}$ acts transitively on the $56$ lines, which will allow us to conclude our proof.
	
	\begin{lem}\label{TransitivitySameIntersection}
		For any line $L$ contained in $\mathcal{P}$, the group $\text{Im}\Par{\rho_{2}}$ acts transitively on $\mathcal{S}_{L}$ and on $\mathcal{S}^{\ast}_{L}$.
	\end{lem}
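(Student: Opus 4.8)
The plan is to prove something slightly stronger than stated: that the stabilizer of $L$ in $\text{Im}\Par{\rho_{2}}$ already acts transitively on $\mathcal{S}_{L}$, and likewise on $\mathcal{S}^{\ast}_{L}$. Since this stabilizer is a subgroup of $\text{Im}\Par{\rho_{2}}$, transitivity of the smaller group immediately yields transitivity of $\text{Im}\Par{\rho_{2}}$. In this way the whole question is reduced to understanding how the line stabilizer permutes the two $27$-element sets attached to $L$.

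First I would recall from \Cref{StabDelPezzo} that the stabilizer of $L$ is isomorphic to $W\Par{E_{6}}$, and more precisely that, as in its proof, this stabilizer acts on $\mathcal{S}_{L}$ through automorphisms of the underlying $27$ bitangents. By the discussion in \Cref{GossetSchlafli} the intersection pattern of the lines in $\mathcal{S}_{L}$ is exactly the Schl\"afli graph, so the induced action is that of the full automorphism group $W\Par{E_{6}}$ of this graph on its $27$ vertices. I would then invoke the classical fact that $W\Par{E_{6}}$ acts transitively on the $27$ lines of a cubic surface: the Schl\"afli graph is vertex-transitive (indeed distance-transitive), so its automorphism group has a single orbit on vertices. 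This yields transitivity of the stabilizer of $L$, hence of $\text{Im}\Par{\rho_{2}}$, on $\mathcal{S}_{L}$.

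For $\mathcal{S}^{\ast}_{L}$, I would use the identity $\mathcal{S}^{\ast}_{L}=\mathcal{S}_{L^{\ast}}$ recorded in \Cref{GossetSchlafli}, together with the observation already used in \Cref{StabDelPezzo} that any element of $\text{Im}\Par{\rho_{2}}$ fixing $L$ must also fix its dual $L^{\ast}$; thus the stabilizer of $L$ coincides with that of $L^{\ast}$. Applying the previous paragraph to $L^{\ast}$ in place of $L$ shows that this common stabilizer acts transitively on $\mathcal{S}_{L^{\ast}}=\mathcal{S}^{\ast}_{L}$, which is exactly what is wanted.

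The only real content, and the step I expect to demand the most care, is the identification of the stabilizer's action on $\mathcal{S}_{L}$ with the standard transitive action of $W\Par{E_{6}}$ on the $27$ lines: one must be certain that the abstract isomorphism $\text{Stab}\Par{L}\cong W\Par{E_{6}}$ furnished by \Cref{StabDelPezzo} is realized by the geometric permutation action on $\mathcal{S}_{L}$, and not merely at the level of abstract groups. Once this matching of the action with the Schl\"afli graph automorphisms is pinned down, transitivity becomes a purely combinatorial consequence of the vertex-transitivity of that graph.
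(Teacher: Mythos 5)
Your proposal is correct and follows essentially the same route as the paper: both deduce transitivity on $\mathcal{S}_{L}$ from the fact that $\text{Stab}\Par{L}\cong W\Par{E_{6}}$ acts on $\mathcal{S}_{L}$ as the full automorphism group of the $27$ lines, which is transitive. For $\mathcal{S}^{\ast}_{L}$ the paper uses the equivariant dual-pair bijection $\mathcal{S}_{L}\to\mathcal{S}^{\ast}_{L}$ rather than rerunning the argument for $L^{\ast}$, but this is only a cosmetic difference.
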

	\begin{proof}
		By \Cref{StabDelPezzo}, the stabilizer of $L$ is isomorphic to $W\Par{E_{6}}$, which is the automorphism group of $\mathcal{S}_{L}$. Since the $W\Par{E_{6}}$ action on the  lines in a smooth cubic surface is transitive, so is the action on $\mathcal{S}_{L}$. Since $\text{Im}\Par{\rho_{2}}$ permutes pairs consisting of a line and its dual, transitivity on $\mathcal{S}^{\ast}_{L}$ follows from that on $\mathcal{S}_{L}$.
	\end{proof}
	
	\begin{lem}\label{TransitivityDifferentIntersection}
		The $\text{Im}\Par{\rho_{2}}$-orbit of a line $L$ in $\mathcal{P}$ contains lines in both $\mathcal{S}_{L}$ and $\mathcal{S}^{\ast}_{L}$.
	\end{lem}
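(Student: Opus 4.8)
The plan is to produce, for an arbitrary line $L$ in $\mathcal{P}$, two explicit elements of $W\Par{E_7}^{+}$—which by \Cref{LowerBound} is contained in $\text{Im}\Par{\rho_2}$—one carrying $L$ into $\mathcal{S}_{L}$ and one carrying it into $\mathcal{S}^{\ast}_{L}$. Since $W\Par{E_7}$ acts transitively on the $56$ lines and the markings $\WBPar{e_0, e_1, \ldots, e_7}$ of $\cohomoldeg{2}{\mathcal{P}}{\zz}$ form a torsor under $W\Par{E_7}$, while $W\Par{E_7}^{+}$ is normal of index $2$, I may choose coordinates so that $\BPar{L}=e_7$; the subgroup $W\Par{E_7}^{+}$ is intrinsic to the lattice and so is unchanged by this choice, hence any element of $W\Par{E_7}^{+}$ exhibited in these coordinates genuinely lies in $\text{Im}\Par{\rho_2}$. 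Throughout I work in the root model, in which $W\Par{E_7}$ is generated by the reflections $s_{\alpha}$ in the roots $\alpha\in K_{\mathcal{P}}^{\perp}$ with $\Bracket{\alpha, \alpha}_{\mathcal{P}}=-2$, and $W\Par{E_7}^{+}$ is the kernel of the determinant (length-parity) homomorphism, i.e.\ the subgroup of products of an even number of reflections.

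For $\mathcal{S}_{L}$ I would use the even coordinate permutations $A_{7}\subset W\Par{E_7}^{+}$: the three-cycle $(5\,6\,7)=s_{e_5-e_6}\,s_{e_6-e_7}$ sends $e_7\mapsto e_5=\BPar{L_5}$, and since $\Bracket{e_5, e_7}_{\mathcal{P}}=0$ we have $L_5\in\mathcal{S}_{L}$. For $\mathcal{S}^{\ast}_{L}$ I would take the root $\alpha=e_0-e_5-e_6-e_7$ together with $\gamma=e_6-e_7$; the even element $s_{\alpha}s_{\gamma}\in W\Par{E_7}^{+}$ sends $e_7\mapsto e_0-e_5-e_7=\BPar{L_{5,7}}$, and since $\Bracket{\BPar{L_{5,7}}, e_7}_{\mathcal{P}}=1$ we have $L_{5,7}\in\mathcal{S}^{\ast}_{L}$. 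These two transformations witness that the $\text{Im}\Par{\rho_2}$-orbit of $L$ meets both Schl\"afli systems.

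The structural reason behind this is that dual lines straddle the two systems of $L$: from $\BPar{M}+\BPar{M^{\ast}}=-K_{\mathcal{P}}$ and $\Bracket{\BPar{L}, -K_{\mathcal{P}}}_{\mathcal{P}}=1$ one obtains $\Bracket{\BPar{L}, \BPar{M}}_{\mathcal{P}}+\Bracket{\BPar{L}, \BPar{M^{\ast}}}_{\mathcal{P}}=1$, so every dual pair $\WBPar{M, M^{\ast}}$ has exactly one line in $\mathcal{S}_{L}$ and one in $\mathcal{S}^{\ast}_{L}$. As $W\Par{E_7}^{+}$ acts transitively on the $28$ dual pairs (\Cref{StabBitangent}), its orbit on lines is forced into both systems; the explicit elements above make this effective and bypass any need to first determine the size of the orbit.

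The main obstacle is the parity bookkeeping. A single reflection moves $e_7$ into either system with ease, but single reflections have determinant $-1$ and hence lie outside $W\Par{E_7}^{+}$, the only part of $\text{Im}\Par{\rho_2}$ available to us at this stage; indeed, whether $\text{Im}\Par{\rho_2}$ contains any reflection at all is equivalent to the dichotomy of \Cref{2Possibilities} that the whole argument is resolving. The work is therefore to realize the required moves by \emph{even} elements, as in the two products of reflections above, while checking that they still land in the prescribed intersection classes. Once this lemma is in hand, it combines with \Cref{TransitivitySameIntersection} to give transitivity of $\text{Im}\Par{\rho_2}$ on all $56$ lines, after which the orbit–stabilizer count against \Cref{StabDelPezzo} yields $\Abs{\text{Im}\Par{\rho_2}}=56\cdot\Abs{W\Par{E_6}}=\Abs{W\Par{E_7}}$ and hence $\text{Im}\Par{\rho_2}=W\Par{E_7}$.
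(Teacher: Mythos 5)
Your argument is correct, but it is a genuinely different proof from the one in the paper. The paper proceeds structurally: it takes any line $N\neq L, L^{\ast}$ in the orbit of $L$ (such an $N$ exists because $\text{Im}\Par{\rho_{2}}$ contains $W\Par{E_{7}}^{+}$, which moves the dual pair $\WBPar{L, L^{\ast}}$), invokes \Cref{TransitivitySameIntersection} to place the entire Schl\"afli system $\mathcal{S}^{\ast}_{N}$ (resp.\ $\mathcal{S}_{N}$) inside the orbit, and then uses the fact that each vertex of the Schl\"afli graph has valence $16$ to see that this $27$-element set, which contains $L$, must meet both $\mathcal{S}_{L}$ and $\mathcal{S}^{\ast}_{L}$. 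You instead exhibit explicit even products of root reflections: with $s_{\alpha}\Par{x}=x+\Bracket{x, \alpha}_{\mathcal{P}}\,\alpha$ for a root $\alpha$ of norm $-2$ in $K_{\mathcal{P}}^{\perp}$, your elements do send $e_{7}$ to $e_{5}$ and to $e_{0}-e_{5}-e_{7}$, these classes have intersection $0$ and $1$ with $e_{7}$ as claimed, and your appeal to the normality of $W\Par{E_{7}}^{+}$ (so that conjugating by any $w\in W\Par{E_{7}}$ carrying $\BPar{L}$ to $e_{7}$ costs nothing) is exactly the right justification for working in standard coordinates. Your route buys independence from \Cref{TransitivitySameIntersection} (hence from \Cref{StabDelPezzo}) and is effectively checkable; the paper's route buys coordinate-freeness and avoids reflection bookkeeping, at the price of quoting the earlier transitivity lemma. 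One caution: your ``structural reason'' paragraph is not by itself a proof --- transitivity on the $28$ dual pairs only forces the orbit to meet every pair, and a priori it could always select the $\mathcal{S}_{L}$-member --- so the burden genuinely rests on the two explicit even elements, as you in effect acknowledge.
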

	\begin{proof}
		Consider a line $N\neq L, L^{\ast}$ in the orbit of $L$, and suppose $N\in \mathcal{S}^{\ast}_{L}$. The intersection pattern of the lines in $\mathcal{S}^{\ast}_{N}$ is given by the Schl\"afli graph, where each vertex has valence $16$. Since $L\in \mathcal{S}^{\ast}_{N}$, this implies that $\mathcal{S}^{\ast}_{N}$ intersects both sets $\mathcal{S}_{L}$ and $\mathcal{S}^{\ast}_{L}$. By \Cref{TransitivitySameIntersection}, the stabilizer of $N$ acts transitively on $\mathcal{S}^{\ast}_{N}$, and thus the result follows. In the case $N\in \mathcal{S}_{L}$, the proof is analogous.
	\end{proof}
	
	\begin{prop}\label{Transitivity}
		$\text{Im}\Par{\rho_{2}}$ acts transitively on the $56$ lines of $\mathcal{P}$.
	\end{prop}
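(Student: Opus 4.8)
The plan is to fix a line $L\subset\mathcal{P}$, write $\mathcal{O}$ for its $\text{Im}(\rho_2)$-orbit, and prove that $\mathcal{O}$ is the whole set of $56$ lines. Relative to $L$, the other $55$ lines are stratified by their intersection number with $L$: the $27$ lines of $\mathcal{S}_L$ at intersection $0$, the $27$ lines of $\mathcal{S}^*_L$ at intersection $1$, and the single dual line $L^*$ at intersection $2$. I would fill $\mathcal{O}$ stratum by stratum, first landing at least one orbit member in each of $\mathcal{S}_L$ and $\mathcal{S}^*_L$ via \Cref{TransitivityDifferentIntersection}, and then sweeping out each stratum using the stabilizer transitivity of \Cref{TransitivitySameIntersection}.

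First, \Cref{TransitivityDifferentIntersection} supplies lines $M\in\mathcal{S}_L\cap\mathcal{O}$ and $N\in\mathcal{S}^*_L\cap\mathcal{O}$. By \Cref{StabDelPezzo} the stabilizer of $L$ in $\text{Im}(\rho_2)$ is isomorphic to $W(E_6)$, and as in the proof of \Cref{TransitivitySameIntersection} it acts transitively on $\mathcal{S}_L$; since it also fixes $L^*$, applying the same statement to $L^*$ shows it acts transitively on $\mathcal{S}^*_L=\mathcal{S}_{L^*}$. Because this stabilizer fixes $L$ it preserves $\mathcal{O}$, so a single orbit member inside a stratum drags the whole stratum into $\mathcal{O}$: for any $A\in\mathcal{S}_L$ some $h$ in the stabilizer sends $M$ to $A$, whence $A=hM\in\mathcal{O}$, and likewise for $\mathcal{S}^*_L$ using $N$. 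Thus $\mathcal{O}\supseteq\{L\}\cup\mathcal{S}_L\cup\mathcal{S}^*_L$, which already accounts for $55$ of the $56$ lines.

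The one remaining line is the dual $L^*$, sitting at intersection $2$ from $L$; this is the delicate point, since no sweep over lines of constant intersection with $L$ can reach it, and the argument must jump to an adjacent configuration. I would pick any $K\in\mathcal{S}^*_L\subseteq\mathcal{O}$ and use the identity $\mathcal{S}^*_L=\mathcal{S}_{L^*}$ together with the symmetry $L^*\in\mathcal{S}_K\iff K\in\mathcal{S}_{L^*}$ to conclude $L^*\in\mathcal{S}_K$. Applying \Cref{TransitivityDifferentIntersection} to $K$, whose orbit is again $\mathcal{O}$, places an orbit member inside $\mathcal{S}_K$; sweeping $\mathcal{S}_K$ by the stabilizer of $K$ exactly as before gives $\mathcal{S}_K\subseteq\mathcal{O}$, so $L^*\in\mathcal{O}$ and $\mathcal{O}$ is all $56$ lines. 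I expect this hop to $L^*$ to be the heart of the matter, since it is the only mechanism by which the orbit crosses between dual lines; once it is in place, transitivity combines with the order-$|W(E_6)|$ line stabilizer of \Cref{StabDelPezzo} through orbit–stabilizer to pin down $|\text{Im}(\rho_2)|=56\,|W(E_6)|=|W(E_7)|$, selecting $W(E_7)$ among the two options of \Cref{2Possibilities}. As a sanity check, once $|\mathcal{O}|\ge55$ one may also finish by noting that $|\mathcal{O}|$ divides $|W(E_7)|$ while $11\nmid|W(E_7)|$, so $|\mathcal{O}|\ne55$ and hence $|\mathcal{O}|=56$.
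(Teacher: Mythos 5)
Your proof is correct, and it reaches the conclusion by a genuinely different route at the decisive step. Both you and the paper begin identically: \Cref{TransitivityDifferentIntersection} puts one orbit member in each of $\mathcal{S}_{L}$ and $\mathcal{S}^{\ast}_{L}$, and \Cref{TransitivitySameIntersection} sweeps out both strata, so the orbit has at least $1+27+27=55$ elements. The paper then finishes purely by counting: orbit--stabilizer together with \Cref{StabDelPezzo} and \Cref{2Possibilities} forces $\Abs{\text{Orbit}\Par{L}}\in\WBPar{28,56}$, and $55>28$ leaves only $56$ --- so the paper never needs to exhibit $L^{\ast}$ in the orbit at all. You instead capture $L^{\ast}$ directly by re-basing at a line $K\in\mathcal{S}^{\ast}_{L}=\mathcal{S}_{L^{\ast}}$, noting $L^{\ast}\in\mathcal{S}_{K}$, and running the same two lemmas from $K$; this ``hop'' is valid (the whole group preserves the orbit of $L$, so sweeping $\mathcal{S}_{K}$ by the stabilizer of $K$ does land $L^{\ast}$ in that orbit). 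What each approach buys: the paper's count is shorter given that \Cref{2Possibilities} is already in hand, while your argument establishes transitivity without invoking the index-$2$ dichotomy of \Cref{2Possibilities} at all, only using it afterwards (via orbit--stabilizer) to conclude $\text{Im}\Par{\rho_{2}}\cong W\Par{E_{7}}$. Your divisibility sanity check ($11\nmid\Abs{W\Par{E_{7}}}$, so the orbit cannot have size $55$) is closer in spirit to the paper's finish and is also correct.
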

	
	\begin{proof}
		For a line $L$ in $\mathcal{P}$, the orbit-stabilizer theorem implies that
		\[\left|\text{Im}\Par{\rho}\right|=\left|\text{Orbit}\Par{L}\right|\cdot\left|\text{Stab}\Par{L}\right|.\]
		\Cref{StabDelPezzo} and \Cref{2Possibilities} imply that
		\begin{align*}
			\Abs{\text{Stab}\Par{L}}&=\Abs{W\Par{E_{6}}}=51840 \\
			\Abs{\text{Im}\Par{\rho}}&\in
			\WBPar{\Abs{W\Par{E_{7}}}, |W\Par{E_{7}}^{+}|}=
			\WBPar{2903040, 1451520}
		\end{align*}
		so it follows that $\Abs{\text{Orbit}\Par{L}}\in \WBPar{28, 56}$. By \Cref{TransitivityDifferentIntersection}, the orbit of $L$ intersects both $\mathcal{S}_{L}$ and $\mathcal{S}^{\ast}_{L}$ and by \Cref{TransitivitySameIntersection}, the $\text{Im}\Par{\rho_{2}}$-action is transitive on each of $\mathcal{S}_{L}$ and $\mathcal{S}^{\ast}_{L}$, implying that
		\[\Abs{\text{Orbit}\Par{L}}\geq 1+27+27>28\]
		and therefore $\Abs{\text{Orbit}\Par{L}}=56$. This implies that $\text{Im}\Par{\rho_{2}}\cong W\Par{E_{7}}$.
	\end{proof}
	
	\section{Lattices}\label{Lattices}
	
	In this section we dwelve into the lattice theory of associated to the $K3$ surface. We will restrict $\rho_{4}$ to two sublattices $L_{+}$ and $L_{-}$ of $\cohomoldeg{2}{X}{\zz}$ associated to the cyclic $\zz/4\zz$ deck group action on $X$. In doing so, our goal is to relate these two restrcitions via the commutative diagram given in \Cref{commutativeDiagram}.
	
	\subsection{Preliminaries}\label{LatticePrelim}
	
	We recall general facts and definitions about lattices which will be used in the treatement of the monodromy of the family of $K3$ surfaces of interest in this paper. This exposition is contained in chapter 2 of \cite{kondoBook}.
	
	A \emph{lattice} $\Par{L, \Bracket{\,,}_{L}}$ of rank $r$ is a pair consisting of a free abelian group $L$ of rank $r$ together with a symmetric bilinear form $\Bracket{\,,}_{L}:L\times L\to \zz$. The quotient 
	\[A_{L}=L^{\ast}/L\]
	denoted as the \emph{discriminant group} of $L$. If $A_{L}$ is trivial, we say that $L$ is \emph{unimodular}.
	
	A sublattice $S\subset L$ is \emph{primitive} if $L/S$ is torsion-free. If $L$ is non-degenerate, the orthogonal complement $S^{\perp}$ of any sublattice $S\subset L$ is primitive.
	
	The group of automorphisms of $L$ is denoted by $O\Par{L}$ and is called its \emph{orthogonal group}. For an even lattice $L$, we define its \emph{discriminant quadratic form} by
	\[q_{L}:A_{L}\to \qq/2\zz\quad\quad\quad q_{L}\Par{x}=\Bracket{x, x}_{L\otimes \qq}\]
	where $\Bracket{\,,}_{L\otimes \qq}$ is a bilinear extension of $\Bracket{\,,}_{L}$ to $L\otimes \qq$. A subgroup $H\subset A_{L}$ is called \emph{isotropic} if $q_{L}\vert_{H}=0$. We denote the group of group automorphisms of $A_{L}$ preserving $q_{L}$ by  $O\Par{q_{L}}$. Any automorphism of $L$ can be extended to $L^{\ast}$, thus inducing an automorphism of $A_{L}$ which respects $q_{L}$. This induces a natural group homomorphism
	\[O\Par{L}\to O\Par{q_{L}}.\]
	A lattice $L$ is called \emph{$2$-elementary} if $A_{L}\cong \Par{\zz/2\zz}^{\oplus l}$ for some $l$. If $L$ is an indefinite $2$-elementary lattice, then the homomorphism $O\Par{L}\to O\Par{q_{L}}$ is known to be surjective.
	
	Let $L$ be an even unimodular lattice, $S\subset L$ a primitive sublattice and $T=S^{\perp}$ its orthogonal complement. There are natural embeddings
	\[S\oplus T\subset L\cong L^{\ast}\subset S^{\ast}\oplus T^{\ast}.\]
	Then $L/\Par{S\oplus T}$ is embedded as an isotropic subgroup of $A_{S}\oplus A_{T}$. This gives the commutative diagram
	
	\begin{equation}\label{discriminantDiagram}
		\begin{tikzcd}
			& & A_{S}\arrow[dd, "\gamma_{ST}"]\\
			L/\Par{S\oplus T}\arrow[r, hookrightarrow, "\iota"]\arrow[drr, bend right=15, "\cong" ']\arrow[urr, bend left=15, "\cong"]& A_{S}\oplus A_{T}\arrow[ur, twoheadrightarrow, "p_{S}" ']\arrow[dr, twoheadrightarrow, "p_{T}"] & \\
			& & A_{T}
		\end{tikzcd}\tag{$\ast$}
	\end{equation}
	where the maps $p_{S}\circ \iota$ and $p_{T}\circ \iota$ induce group isomorphims between $L/\Par{S\oplus T}$ and $A_{S}, A_{T}$. In particular, there is an isomorphism $A_{S}\cong A_{T}$. Furthermore, the isomorphism
	\[\gamma_{ST}=p_{T}\circ \Par{p_{S}\vert_{\iota\Par{L/\Par{S\oplus T}}}}^{-1}:A_{S}\to A_{T}\]
	satisfies that $q_{S}=-\gamma_{ST}\circ q_{T}$, hence $O\Par{q_{S}}\cong O\Par{q_{T}}$ via the map $c_{\gamma_{ST}}$ given by conjugation by $\gamma_{ST}$. Let $c_{S}$ and $c_{T}$ be the conjugation map given by
	\begin{align*}
		c_{S}:O\Par{q_{S}}&\to \text{Aut}\Par{L/\Par{S\oplus T}} & c_{T}:O\Par{q_{T}}&\to \text{Aut}\Par{L/\Par{S\oplus T}}\\
		\varphi&\mapsto  \Par{p_{S}\circ \iota}^{-1}\circ \varphi \circ \Par{p_{S}\circ \iota} & \varphi&\mapsto  \Par{p_{T}\circ \iota}^{-1}\circ \varphi \circ \Par{p_{T}\circ \iota}
	\end{align*}
	By commutativity of the diagram (\ref{discriminantDiagram}), the images of $c_{S}$ and $c_{T}$ coincide, and we denote $O\Par{L/\Par{S\oplus T}}$ by this image. Therefore, the following commutative diagram is induced:
	\begin{equation}\label{orthogonalDiagram}
		\begin{tikzcd}
			& O\Par{q_{S}}\arrow[dd, "c_{\gamma_{ST}}", "\cong" ']\\
			O\Par{L/\Par{S\oplus T}}
			\arrow[ur, "\cong", "c_{S}" ']
			\arrow[dr, "c_{T}", "\cong" '] & \\
			& O\Par{q_{T}}
		\end{tikzcd}\tag{$\ast \ast$}
	\end{equation}
	
	\subsection{$K3$ lattices}\label{L+L-}
	
	A $K3$ surface $X$ is a simply connected complex surface whose canonical class $K_{X}$ is trivial. The \emph{$K3$-lattice} is the lattice given by $\cohomoldeg{2}{X}{\zz}$ together with its intersection pairing given by the cup product. This lattice is isomorphic to the even unimodular lattice of signature type $\Par{3, 19}$:
	\[U^{\oplus 3}\oplus E_{8}\Par{-1}^{\oplus 2}\]
	where $U$ is the hyperbolic lattice and $E_{8}\Par{-1}$ is the negative $E_{8}$ lattice. Namely,
	\[U=\begin{pmatrix}0&1 \\ 1&0\end{pmatrix}\quad E_{8}\Par{-1}=\begin{pmatrix}
		-2 & 1 & & & & & & \\
		1 & -2 & 1 & & & & & \\
		& 1 & -2 & 1& & & & \\
		& & 1 & -2 & 1 & & & \\
		& & & 1 & -2 & 1 & & 1 \\
		& & & & 1 & -2 & 1 & \\
		& & & & & 1 & -2  & \\
		& & & & 1 & & & -2 
	\end{pmatrix}\]
	
	We now turn to our family of $K3$ surfaces $X$ branching over a smooth quartic curve. Before computing the monodromy of this family, we introduce two auxiliary sublattices of $\cohomoldeg{2}{X}{\zz}$ crucial to its computation. These lattices arise from the $\zz/4\zz$ deck group action on $X$, and are extensively used in \cite{kondo} in order to describe the moduli space of smooth quartic curves in $\pp^{2}$ as a ball complement quotient. The deck group action on $X$ induces an action $T: \cohomoldeg{2}{X}{\zz}\to \cohomoldeg{2}{X}{\zz}$ which satisfies $T^{4}=I$ and commutes with the action of $\rho_{4}$. Two sublattices associated to $T$ that arise naturally are
	\[L_{+}=\left\{x\in \cohomoldeg{2}{X}{\zz}\mid T^{2}x=x\right\}\quad\quad\quad L_{-}=\left\{x\in \cohomoldeg{2}{X}{\zz}\mid T^{2}x=-x\right\}.\]
	
	\begin{prop}\label{orthogonal}
		The lattices $L_{+}$ and $L_{-}$ are the orthogonal complement of each other in $\cohomoldeg{2}{X}{\zz}$.
	\end{prop}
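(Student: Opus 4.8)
The plan is to treat $S := T^{2}$ as an involutive isometry of the $K3$-lattice and to identify $L_{+}$ and $L_{-}$ as its $\pm 1$--eigenlattices, so that the statement becomes the general assertion that the eigenlattices of an integral isometric involution on a non-degenerate lattice are mutually orthogonal, primitive, and fill out each other's orthogonal complement. Two structural facts underpin everything. First, since $T$ is induced by the deck transformation of $X$, which is a biholomorphism and hence orientation-preserving, $T$ preserves the cup-product pairing; thus $S = T^{2}\in O\Par{\cohomoldeg{2}{X}{\zz}}$ and $S^{2}=T^{4}=I$. Second, $\cohomoldeg{2}{X}{\zz}\cong U^{\oplus 3}\oplus E_{8}\Par{-1}^{\oplus 2}$ is unimodular, so its intersection form is non-degenerate and the orthogonal complement of any sublattice is primitive.

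First I would record that $L_{+}$ and $L_{-}$ are primitive. If $nx\in L_{+}$ for some nonzero $n\in\zz$ and $x\in\cohomoldeg{2}{X}{\zz}$, then $n\Par{Sx-x}=S\Par{nx}-nx=0$, and since $\cohomoldeg{2}{X}{\zz}$ is torsion-free this forces $Sx=x$, i.e. $x\in L_{+}$; the identical argument handles $L_{-}$. Next I would establish orthogonality: for $x\in L_{+}$ and $y\in L_{-}$, using that $S$ is an isometry together with $Sx=x$ and $Sy=-y$,
\[\Bracket{x, y} = \Bracket{Sx, Sy} = \Bracket{x, -y} = -\Bracket{x, y},\]
so $\Bracket{x, y}=0$. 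This yields the inclusions $L_{+}\subseteq L_{-}^{\perp}$ and $L_{-}\subseteq L_{+}^{\perp}$.

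It then remains to upgrade these inclusions to equalities, and this integral (rather than merely rational) statement is the only real content of the proposition—over $\qq$ the decomposition into $\pm1$--eigenspaces is automatic. The key device is the identity $2x = \Par{x+Sx} + \Par{x-Sx}$, where $S\Par{x+Sx}=x+Sx$ and $S\Par{x-Sx}=-\Par{x-Sx}$ because $S^{2}=I$; hence $x+Sx\in L_{+}$ and $x-Sx\in L_{-}$, so $2\,\cohomoldeg{2}{X}{\zz}\subseteq L_{+}\oplus L_{-}$. Consequently $L_{+}\oplus L_{-}$ has finite index in $\cohomoldeg{2}{X}{\zz}$, giving $\text{rank}\,L_{+}+\text{rank}\,L_{-}=22$. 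By non-degeneracy of the unimodular form, $\text{rank}\,L_{-}^{\perp}=22-\text{rank}\,L_{-}=\text{rank}\,L_{+}$, so the primitive sublattice $L_{+}$ has finite index in $L_{-}^{\perp}$. But $L_{-}^{\perp}/L_{+}$ embeds in $\cohomoldeg{2}{X}{\zz}/L_{+}$, which is torsion-free by primitivity of $L_{+}$; being simultaneously finite and torsion-free it is trivial, whence $L_{+}=L_{-}^{\perp}$, and by the symmetric argument $L_{-}=L_{+}^{\perp}$. The only point requiring care is precisely this saturation step: one must combine primitivity of the eigenlattices with the rank count rather than stopping at the easy rational orthogonal decomposition.
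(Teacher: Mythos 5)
Your proof is correct, but it reaches the key inclusion $L_{+}^{\perp}\subseteq L_{-}$ by a genuinely different route than the paper. The orthogonality step $\Bracket{x,y}=\Bracket{T^{2}x,T^{2}y}=-\Bracket{x,y}$ is identical. For the reverse inclusion, however, the paper argues directly: given $y\in L_{+}^{\perp}$, the element $\Par{T^{2}+I}y$ lies in $L_{+}$ (since $T^{4}=I$) and pairs to zero with every $x\in L_{+}$, so non-degeneracy of $L_{+}$ forces $\Par{T^{2}+I}y=0$, i.e.\ $y\in L_{-}$. You instead prove primitivity of the eigenlattices, use $2x=\Par{x+T^{2}x}+\Par{x-T^{2}x}$ to show $L_{+}\oplus L_{-}$ has finite index, and then saturate via the rank count and the torsion-freeness of $\cohomoldeg{2}{X}{\zz}/L_{+}$. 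Both arguments are sound. Yours is longer but has the virtue of making explicit two facts the paper needs later anyway (primitivity of $L_{\pm}$ and the finite-index/$2$-elementary structure of $L_{+}\oplus L_{-}$), and of not presupposing non-degeneracy of $L_{+}$; the paper's version is shorter but quietly relies on that non-degeneracy, which itself is most easily justified by exactly the finite-index observation you spell out. So your ``saturation'' detour is not wasted work --- it is the honest price of the statement, paid up front rather than deferred.
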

	
	\begin{proof}
		We will show that $L_{+}^{\perp}=L_{-}$ only, as the proof that $L_{-}^{\perp}=L_{+}$ is analogous. For $x\in L_{+}$ and $y\in L_{-}$,
		\[\Bracket{x, y}=\Bracket{T^{2}x, T^{2}y}=\Bracket{x, -y}=-\Bracket{x, y}\]
		and thus $\Bracket{x, y}=0$. This shows that $L_{-}\subset L_{+}^{\perp}$. To see the reverse inclusion, suppose $y\in L_{+}^{\perp}$. Then
		\[\Bracket{x, \Par{T^{2}+I}y}=0\quad \forall x\in L_{+}\]
		We have that $\Par{T^{2}+I}y\in L_{+}$ since $T^{4}=I$. Since $L_{+}$ is non-degenerate, $\Par{T^{2}+ I}y=0$, so $y\in L_{-}$.
	\end{proof}
	
	\Cref{orthogonal} implies that $L_{+}$ and $L_{-}$ are primitive lattices in $\cohomoldeg{2}{X}{\zz}$. Let $\mathcal{P}$ be the intermediate double branched cover of $\pp^{2}$ corresponding to $X$. Then $X$ is also a double branched cover of $\mathcal{P}$, and the branched cover map $p:X\to \mathcal{P}$ induces the pullback
	\begin{align*}
		p^{\ast}:\cohomoldeg{2}{\mathcal{P}}{\zz}&\to \cohomoldeg{2}{X}{\zz} \\
		e_{i}&\mapsto \tilde{e_{i}}
	\end{align*}
	The lattice $L_{+}$ is generated by $\tilde{e_{0}}, \tilde{e_{1}}, \ldots, \tilde{e_{7}}$. Letting $H=\cohomoldeg{2}{X}{\zz}/\Par{L_{+}\oplus L_{-}}$, we have that
	\[H\cong A_{L_{+}}\cong A_{L_{-}}\cong \Par{\zz/2\zz}^{\oplus 8}\]
	and thus $L_{+}$ and $L_{-}$ are $2$-elementary even lattices. In \cite{nikulin}, Theorem 3.6.2, $2$-elementary even lattices are classified by their rank and minimal number of generators, hence
	\[L_{+}\cong \Bracket{2}\oplus A_{1}^{\oplus 7}\quad\quad\quad L_{-}\cong A_{1}^{\oplus 2}\oplus D_{4}^{\oplus 2}\oplus U\oplus U\Par{2}.\]
	In \cite{kondo} it is shown that $O\Par{q_{L_{+}}}$ is isomorphic to a split extension of $\text{Sp}_{6}\Par{\zz/2\zz}$ by $\zz/2\zz$, thus a semidirect product. Since $\text{Aut}\Par{\zz/2\zz}$ is trivial, it  is a direct product so $O\Par{q_{+}}\cong W\Par{E_{7}}$. Diagram (\ref{orthogonalDiagram}) then gives
	\[O\Par{q_{L_{+}}}\cong O\Par{q_{L_{-}}}\cong  W\Par{E_{7}}.\]
	
	\subsection{A useful diagram}\label{MasterDiagram}
	
	In order to relate $\rho_{4}$ to the lattices $L_{+}, L_{-}$, we will use a commutative diagram consisting of quotient and restriction maps. The key observation lies in the fact that $\rho_{4}$ acts on each summand $L_{+}$ and $L_{-}$.
	
	\begin{prop}\label{restriction}
		The action of $\rho_{4}$ can be restricted from $\cohomoldeg{2}{X}{\zz}$ to $L_{+}$ and to $L_{-}$.
	\end{prop}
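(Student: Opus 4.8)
The plan is to show directly that each $\rho_{4}\Par{g}$, for $g\in \pi_{1}\Par{\mathcal{U}_{4}}$, carries $L_{+}$ into $L_{+}$ and $L_{-}$ into $L_{-}$; this invariance is exactly the assertion that the $\pi_{1}\Par{\mathcal{U}_{4}}$-action restricts to each sublattice. The only input I would need is the compatibility of the monodromy with the deck transformation, namely that $\rho_{4}\Par{g}$ commutes with $T$ for every $g$. This commutativity was already recorded in \Cref{L+L-}: since $T$ arises from the fiberwise $\zz/4\zz$ deck group action on the family $\mathcal{E}_{4,4}\to \mathcal{U}_{4}$, it is locally constant over the base and therefore commutes with parallel transport around loops, i.e. with $\rho_{4}$.

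Granting this, the argument reduces to a short eigenlattice computation. Because $\rho_{4}\Par{g}$ commutes with $T$, it commutes with $T^{2}$. Hence for $x\in L_{+}$, so that $T^{2}x=x$, one has
\[T^{2}\Par{\rho_{4}\Par{g}x}=\rho_{4}\Par{g}\Par{T^{2}x}=\rho_{4}\Par{g}x,\]
so $\rho_{4}\Par{g}x\in L_{+}$. Likewise, for $x\in L_{-}$, so that $T^{2}x=-x$,
\[T^{2}\Par{\rho_{4}\Par{g}x}=\rho_{4}\Par{g}\Par{T^{2}x}=-\rho_{4}\Par{g}x,\]
so $\rho_{4}\Par{g}x\in L_{-}$. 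As $g$ is arbitrary, both $L_{+}$ and $L_{-}$ are $\rho_{4}$-invariant, which yields the desired restrictions $\rho_{4}\mid_{L_{+}}$ and $\rho_{4}\mid_{L_{-}}$.

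There is essentially no analytic obstacle here: the whole content is the commutativity of $\rho_{4}$ with $T$. The one point I would take care with is ensuring this commutativity is available at the level of the integral lattice $\cohomoldeg{2}{X}{\zz}$, rather than only after tensoring with $\cc$, since $L_{\pm}$ are defined as integral sublattices. But since both $T$ and each $\rho_{4}\Par{g}$ are automorphisms of integral cohomology, and the defining conditions $T^{2}x=\pm x$ are integral, the computation above takes place entirely over $\zz$ and no such subtlety arises. I would also note that the same argument shows $\rho_{4}$ preserves the full eigenlattice decomposition determined by $T$; this is precisely what later permits the two restrictions $\rho_{4}\mid_{L_{+}}$ and $\rho_{4}\mid_{L_{-}}$ to be compared through the discriminant-form diagram (\ref{orthogonalDiagram}).
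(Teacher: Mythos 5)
Your argument is exactly the paper's: both invoke the commutativity of $\rho_{4}\Par{\gamma}$ with $T$ (recorded in \Cref{L+L-}) and compute $T^{2}\rho_{4}\Par{\gamma}x=\rho_{4}\Par{\gamma}T^{2}x=\pm\rho_{4}\Par{\gamma}x$ for $x\in L_{\pm}$ to conclude that each eigenlattice is preserved. The proposal is correct and matches the paper's proof.
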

	
	\begin{proof}
		Since the action of $\rho_{4}$ and $T$ commute, we have
		\[T^{2}\rho_{4}\Par{\gamma}x=\rho_{4}\Par{\gamma}T^{2}x=\pm\rho_{4}\Par{\gamma}x.\]
		for $x\in L_{\pm}$ and $\gamma\in \pi_{1}\Par{\mathcal{U}_{4}}$, and hence $\rho_{4}\Par{\gamma}x\in L_{\pm}$.
	\end{proof}
	
	We proceed to define the necessary maps for our diagram. \Cref{restriction} tells us that there are well defined restriction maps $\text{res}\Par{L_{+}}$ and $\text{res}\Par{L_{-}}$ given by
	\begin{align*}
		\text{res}\Par{L_{+}}:\text{Im}\Par{\rho_{4}}&\to O\Par{L_{+}} & \text{res}\Par{L_{-}}:\text{Im}\Par{\rho_{4}}&\to O\Par{L_{-}} \\
		\rho_{4}\Par{\gamma} &\mapsto \rho_{4}\Par{\gamma}\vert_{L_{+}} & \rho_{4}\Par{\gamma} &\mapsto \rho_{4}\Par{\gamma}\vert_{L_{-}} .
	\end{align*}
	We define the composition maps $\rho_{4}^{+}=\text{res}\Par{L_{+}}\circ\rho_{4}$ and $\rho_{4}^{-}=\text{res}\Par{L_{-}}\circ\rho_{4}$. This gives the map
	\begin{align*}
		\text{mod}\Par{L_{+}\oplus L_{-}}:\text{Im}\Par{\rho_{4}} &\to O\Par{H}\\
		\rho_{4}\Par{\gamma}&\mapsto \left.\Par{\widetilde{\rho_{4}^{+}\Par{\gamma}} \pmod{L_{+}}, \widetilde{\rho_{4}^{-}\Par{\gamma}}\pmod{L_{-}}}\right|_{H}
	\end{align*}	 
	which is well defined regarding $H$ as a subgroup of $A_{L_{+}}\oplus A_{L_{-}}$. Finally, since $L_{+}$ and $L_{-}$ are even indefinite $2$-elementary lattices, we have the surjective homomorphisms
	\begin{align*}
		\text{mod}\Par{L_{+}}:O\Par{L_{+}}&\to O\Par{q_{L_{+}}} & \text{mod}\Par{L_{-}}:O\Par{L_{-}}&\to O\Par{q_{L_{-}}} \\
		\varphi &\mapsto \tilde{\varphi}\pmod{L_{+}} & \varphi &\mapsto \tilde{\varphi}\pmod{L_{-}}
	\end{align*}
	
	Before putting these maps together, we compute $\text{Im}\Par{\rho_{4}^{+}}$.
	
	\begin{prop}\label{Rho4+}
		$\text{Im}\Par{\rho_{4}^{+}}\cong W\Par{E_{7}}$.
	\end{prop}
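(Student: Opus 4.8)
The plan is to identify the restricted action $\rho_{4}^{+}$ with (a conjugate of) the del Pezzo monodromy $\rho_{2}$ and then to invoke \Cref{TheoremDelPezzo}. The point is that $L_{+}$ is nothing but the image of $H^{2}(\mathcal{P};\mathbb{Z})$ under the pullback attached to the double cover $p:X\to \mathcal{P}$, and this pullback intertwines the two monodromy representations.

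First I would record the basic properties of $p^{\ast}:H^{2}(\mathcal{P};\mathbb{Z})\to H^{2}(X;\mathbb{Z})$. Since $p_{\ast}\circ p^{\ast}=\deg(p)\cdot \mathrm{id}=2\cdot \mathrm{id}$, the map $p^{\ast}$ is injective, and by \Cref{L+L-} its image is precisely $L_{+}$, which is generated by $\tilde{e_{0}},\tilde{e_{1}},\ldots,\tilde{e_{7}}=p^{\ast}(e_{0}),\ldots,p^{\ast}(e_{7})$. Hence $p^{\ast}$ restricts to an isomorphism of abelian groups $p^{\ast}:H^{2}(\mathcal{P};\mathbb{Z})\xrightarrow{\sim}L_{+}$ that scales the intersection form by $\deg(p)=2$; this is exactly why $L_{+}\cong\Bracket{2}\oplus A_{1}^{\oplus 7}$ is, up to the factor of $2$, the del Pezzo lattice $\Bracket{1}\oplus\Bracket{-1}^{\oplus 7}$.

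The heart of the argument is the equivariance of $p^{\ast}$ for the two monodromy actions. Both families $\mathcal{E}_{2,4}$ and $\mathcal{E}_{4,4}$ sit over the same base $\mathcal{U}_{4}$, and the covering map $p$ is defined fiberwise over all of $\mathcal{U}_{4}$. Thus for any loop $\gamma\in\pi_{1}\Par{\mathcal{U}_{4}}$, parallel transport produces monodromy diffeomorphisms $\phi^{\mathcal{P}}_{\gamma}$ of $\mathcal{P}$ and $\phi^{X}_{\gamma}$ of $X$ which commute with $p$ up to isotopy, that is, $p\circ\phi^{X}_{\gamma}\simeq\phi^{\mathcal{P}}_{\gamma}\circ p$. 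Passing to cohomology gives the exact relation
\[\rho_{4}\Par{\gamma}\circ p^{\ast}=p^{\ast}\circ\rho_{2}\Par{\gamma}.\]
Restricting both sides to $L_{+}=\mathrm{im}\Par{p^{\ast}}$ yields $\rho_{4}^{+}\Par{\gamma}=p^{\ast}\circ\rho_{2}\Par{\gamma}\circ\Par{p^{\ast}}^{-1}$, so the assignment $\rho_{2}\Par{\gamma}\mapsto\rho_{4}^{+}\Par{\gamma}$ is simply conjugation by the group isomorphism $p^{\ast}$. It is therefore a group isomorphism $\text{Im}\Par{\rho_{2}}\xrightarrow{\sim}\text{Im}\Par{\rho_{4}^{+}}$, and by \Cref{TheoremDelPezzo} we conclude $\text{Im}\Par{\rho_{4}^{+}}\cong\text{Im}\Par{\rho_{2}}\cong W\Par{E_{7}}$.

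The main obstacle will be making the equivariance precise: one must justify that the $K3$ monodromy genuinely covers the del Pezzo monodromy, i.e. that the tower $X\to\mathcal{P}\to\pp^{2}$ of cyclic branched covers varies continuously and compatibly over $\mathcal{U}_{4}$ so that parallel transport commutes with $p$. This is exactly the naturality of the cyclic branched cover construction with respect to the family; once it is granted, the remainder is the formal conjugation argument above.
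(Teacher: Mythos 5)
Your proposal is correct and follows essentially the same route as the paper: the paper likewise observes that $\rho_{2}$ determines $\rho_{4}^{+}$ via conjugation $c_{p^{\ast}}$ by the pullback isomorphism $p^{\ast}:\cohomoldeg{2}{\mathcal{P}}{\zz}\to L_{+}$ and then invokes \Cref{TheoremDelPezzo}. Your additional remarks---injectivity of $p^{\ast}$ via $p_{\ast}\circ p^{\ast}=2\cdot\mathrm{id}$ and the fiberwise naturality of the branched cover tower making $p^{\ast}$ equivariant---are exactly the details the paper leaves implicit.
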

	
	\begin{proof}
		The action of $\rho_{2}$ on $\cohomoldeg{2}{\mathcal{P}}{\zz}$ completely determines that of $\rho_{4}$ on $L_{+}$ by conjugation $c_{p^{\ast}}$ with the pullback $p^{\ast}:\cohomoldeg{2}{\mathcal{P}}{\zz}\to L_{+}$. Together with \Cref{TheoremDelPezzo}, this gives the following commutative diagram:
		
		\begin{center}
			\begin{tikzcd}
				\pi_{1}\Par{\mathcal{U}_{4}}\arrow[r, twoheadrightarrow, "\rho_{4}"] \arrow[d, twoheadrightarrow, "\rho_{2}" '] \arrow[dr, twoheadrightarrow, "\rho_{4}^{+}"]& \text{Im}\Par{\rho_{4}}\arrow[d, twoheadrightarrow, "\text{res}\Par{L_{+}}"]\\
				W\Par{E_{7}}\arrow[r, "c_{p^{\ast}}" '] & \text{Im}\Par{\rho_{4}^{+}} 
			\end{tikzcd}
		\end{center}
		Since $\rho_{4}^{+}$ is surjective, so is $c_{p^{\ast}}$. Since the pullback $p^{\ast}$ is a group isomorphism between $\cohomoldeg{2}{\mathcal{P}}{\zz}$ and $L_{+}$, we have that $c_{p^{\ast}}$ is injective. Hence, $c_{p^{\ast}}$ is an isomorphism and $\text{Im}\Par{\rho_{4}^{+}}\cong W\Par{E_{7}}$.
	\end{proof}
	
	The restriction of $\text{mod}\Par{L_{+}}$ to $\text{Im}\Par{\rho_{4}^{+}}$ is surjective and is thus an isomorphism since $O\Par{q_{L_{+}}}\cong W\Par{E_{7}}$. Putting together the discussion above, we obtain the following.
	
	\begin{prop}\label{commutativeDiagram}
		The following diagram commutes:
		\begin{center}
			\begin{tikzcd}
				& & O\Par{L_{-}}
				\arrow[rr, twoheadrightarrow, "\text{mod}\Par{L_{-}}"] && O\Par{q_{L_{-}}} \\
				\pi_{1}\Par{\mathcal{U}_{4}}
				\arrow[r, twoheadrightarrow, "\rho_{4}"]
				\arrow[drr, twoheadrightarrow, bend right=15, "\rho_{4}^{+}" ']
				\arrow[urr, bend left=15, "\rho_{4}^{-}"]& \text{Im}\Par{\rho_{4}}
				\arrow[rr, "\text{mod}\Par{L_{+}\oplus L_{-}}"]
				\arrow[dr, twoheadrightarrow, "\text{res}\Par{L_{+}}" ']
				\arrow[ur, "\text{res}\Par{L_{-}}"] &&O\Par{H}
				\arrow[ur, "c_{L_{-}}", "\cong" ']
				\arrow[dr, "c_{L_{+}}" ', "\cong"]&  \\
				& & W\Par{E_{7}}
				\arrow[rr, twoheadrightarrow, "\cong", "\text{mod}\Par{L_{+}}" '] && O\Par{q_{L_{+}}}
				\arrow[uu, "\cong", "c_{\gamma_{L_{+}L_{-}}}" ']
			\end{tikzcd}
		\end{center}
		
	\end{prop}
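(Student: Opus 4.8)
The plan is to verify commutativity of the diagram in \Cref{commutativeDiagram} by chasing through the three maps emanating from $\text{Im}\Par{\rho_{4}}$ and reconciling them with the abstract lattice-theoretic diagram (\ref{orthogonalDiagram}) established in the preliminaries. The diagram decomposes naturally into several sub-triangles and sub-squares, and I would establish commutativity of each piece separately before assembling them. The abstract machinery from \Cref{LatticePrelim}—specifically diagram (\ref{orthogonalDiagram}) with $S=L_{+}$ and $T=L_{-}$—already provides the commutativity of the right-hand portion involving $O\Par{q_{L_{+}}}$, $O\Par{q_{L_{-}}}$, $O\Par{H}$ and the conjugation isomorphisms $c_{L_{+}}, c_{L_{-}}, c_{\gamma_{L_{+}L_{-}}}$, once we identify $O\Par{L/\Par{S\oplus T}}$ with $O\Par{H}$. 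So the real content lies in the left-hand portion: the two triangles asserting $\rho_{4}^{+}=\text{res}\Par{L_{+}}\circ \rho_{4}$ and $\rho_{4}^{-}=\text{res}\Par{L_{-}}\circ \rho_{4}$, together with the identification of $W\Par{E_{7}}$ with $\text{Im}\Par{\rho_{4}^{+}}$ via \Cref{Rho4+}.

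\emph{First} I would observe that the two bottom-left triangles commute essentially by definition: the compositions $\rho_{4}^{+}=\text{res}\Par{L_{+}}\circ \rho_{4}$ and $\rho_{4}^{-}=\text{res}\Par{L_{-}}\circ \rho_{4}$ are exactly how $\rho_{4}^{\pm}$ were defined, using \Cref{restriction} which guarantees the restriction maps are well-defined. \emph{Next}, \Cref{Rho4+} identifies $\text{Im}\Par{\rho_{4}^{+}}$ with $W\Par{E_{7}}$ and the bottom edge $\text{mod}\Par{L_{+}}$ restricted to this image is an isomorphism onto $O\Par{q_{L_{+}}}$, as noted immediately after \Cref{Rho4+}, since $O\Par{q_{L_{+}}}\cong W\Par{E_{7}}$ and $\text{mod}\Par{L_{+}}$ is surjective on the indefinite $2$-elementary lattice $L_{+}$. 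This pins down the bottom row of the diagram.

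\emph{The main obstacle} I expect is verifying commutativity of the central square involving $\text{mod}\Par{L_{+}\oplus L_{-}}$ and $O\Par{H}$—that is, showing that the two composite paths from $\text{Im}\Par{\rho_{4}}$ down to $O\Par{q_{L_{\pm}}}$ (via $\text{res}\Par{L_{\pm}}$ then $\text{mod}\Par{L_{\pm}}$) agree with the path through $O\Par{H}$ after applying the conjugations $c_{L_{\pm}}$. The subtlety is that an element $\rho_{4}\Par{\gamma}\in O\Par{\cohomoldeg{2}{X}{\zz}}$ acts on all of $\cohomoldeg{2}{X}{\zz}$, and its restrictions to $L_{+}$ and $L_{-}$ induce actions on the respective discriminant groups $A_{L_{+}}$ and $A_{L_{-}}$; commutativity of (\ref{discriminantDiagram}) forces these two induced actions to be intertwined by $\gamma_{L_{+}L_{-}}$ precisely because $\rho_{4}\Par{\gamma}$ preserves the overlattice $\cohomoldeg{2}{X}{\zz}$ and hence the isotropic glue subgroup $H\subset A_{L_{+}}\oplus A_{L_{-}}$. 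The key point to spell out is that $\text{mod}\Par{L_{+}\oplus L_{-}}$ genuinely lands in $O\Par{H}$—i.e., that the pair of induced discriminant automorphisms preserves the graph $H$—which follows because $\rho_{4}\Par{\gamma}$ is an isometry of the whole unimodular lattice and thus sends the lattice $\cohomoldeg{2}{X}{\zz}/\Par{L_{+}\oplus L_{-}}$ to itself.

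\emph{Finally} I would assemble the pieces: the outer triangles and bottom row are commutative by the definitional and \Cref{Rho4+} considerations; the right-hand portion is commutative by the abstract diagram (\ref{orthogonalDiagram}) specialized to $L_{\pm}$; and the central square is commutative by the glue-group compatibility just described. Since all these sub-diagrams share edges consistently, their union is the asserted commutative diagram. The upshot, which motivates the whole construction, is that the composite $c_{\gamma_{L_{+}L_{-}}}\circ \text{mod}\Par{L_{+}}\circ \rho_{4}^{+}$ equals $\text{mod}\Par{L_{-}}\circ \rho_{4}^{-}$, so that the action of $\rho_{4}$ on the discriminant group of $L_{-}$ is completely determined by its action on $L_{+}$—which by \Cref{Rho4+} is the full $W\Par{E_{7}}$—thereby reducing the computation of $\text{Im}\Par{\rho_{4}}$ to understanding $\rho_{4}^{-}$ on $L_{-}$ alone.
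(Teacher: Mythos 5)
Your proposal is correct and follows essentially the same route as the paper, which simply assembles the definitional triangles, \Cref{Rho4+}, and the abstract diagrams (\ref{discriminantDiagram}) and (\ref{orthogonalDiagram}) specialized to $S=L_{+}$, $T=L_{-}$; the paper states the proposition as a summary of "the discussion above" without a separate proof. Your explicit verification that $\text{mod}\Par{L_{+}\oplus L_{-}}$ lands in $O\Par{H}$ because each $\rho_{4}\Par{\gamma}$ preserves the unimodular overlattice and hence the glue subgroup is exactly the (implicit) content of the paper's argument.
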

	
	\section{Computing $\text{Im}\left(\rho_{4}\right)$}\label{Rho4}
	In this section we prove \Cref{TheoremK3}. To do so, we first show that $\text{Im}\Par{\rho_{4}}$ and $\text{Im}\Par{\rho_{4}^{-}}$ are isomorphic. We then proceed to study $\zz\BPar{i}$-lattice structure on $L_{-}$ to describe $\text{Im}\Par{\rho_{4}^{-}}$. Finally, we use Kondo's description of the moduli space of smooth quartic curves proved in \cite{kondo} in order to compute $\text{Im}\Par{\rho_{4}}$.
	\subsection{Reduction to $\rho_{4}^{-}$}\label{reductionRho4-}
	
	\begin{prop}\label{reduction}
		$\text{Im}\Par{\rho_{4}}\cong \text{Im}\Par{\rho_{4}^{-}}$.
	\end{prop}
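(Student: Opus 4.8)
The plan is to show that the restriction map $\text{res}\Par{L_{-}}$ is injective on $\text{Im}\Par{\rho_{4}}$. Since this map is surjective onto $\text{Im}\Par{\rho_{4}^{-}}$ by the very definition of $\rho_{4}^{-}=\text{res}\Par{L_{-}}\circ \rho_{4}$, injectivity immediately yields the asserted isomorphism. Concretely, I would prove that if $\rho_{4}\Par{\gamma}$ acts as the identity on $L_{-}$, then $\rho_{4}\Par{\gamma}$ is the identity on all of $\cohomoldeg{2}{X}{\zz}$; that is, $\ker\Par{\text{res}\Par{L_{-}}\vert_{\text{Im}\Par{\rho_{4}}}}$ is trivial.

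First I would record that any automorphism in $\text{Im}\Par{\rho_{4}}$ is determined by its restrictions to $L_{+}$ and $L_{-}$. Indeed, by \Cref{orthogonal} the orthogonal summands satisfy $L_{-}=L_{+}^{\perp}$, so $L_{+}\oplus L_{-}$ is a sublattice of finite index in the unimodular lattice $\cohomoldeg{2}{X}{\zz}$. By \Cref{restriction} every element of $\text{Im}\Par{\rho_{4}}$ preserves both summands, and an automorphism preserving $L_{+}\oplus L_{-}$ is determined by its restrictions there after tensoring with $\qq$, using non-degeneracy of the intersection form. Hence it suffices to show that triviality on $L_{-}$ forces triviality on $L_{+}$ as well.

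The heart of the argument is a diagram chase through \Cref{commutativeDiagram}. Assume $\rho_{4}^{-}\Par{\gamma}=\rho_{4}\Par{\gamma}\vert_{L_{-}}=\text{id}$. Then the induced action on the discriminant group $A_{L_{-}}=L_{-}^{\ast}/L_{-}$ is trivial, so $\text{mod}\Par{L_{-}}\Par{\rho_{4}^{-}\Par{\gamma}}$ is the identity in $O\Par{q_{L_{-}}}$. Because $c_{L_{-}}$ is an isomorphism, the corresponding element of $O\Par{H}$ is trivial, and then the isomorphisms $c_{\gamma_{L_{+}L_{-}}}:O\Par{q_{L_{+}}}\to O\Par{q_{L_{-}}}$ and $c_{L_{+}}$ force $\text{mod}\Par{L_{+}}\Par{\rho_{4}^{+}\Par{\gamma}}$ to be the identity in $O\Par{q_{L_{+}}}$. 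The final step invokes the observation recorded after \Cref{Rho4+}: the map $\text{mod}\Par{L_{+}}$ restricts to an \emph{isomorphism} on $\text{Im}\Par{\rho_{4}^{+}}\cong W\Par{E_{7}}\cong O\Par{q_{L_{+}}}$, and is in particular injective there, so $\rho_{4}^{+}\Par{\gamma}=\text{id}$ on $L_{+}$. Combined with triviality on $L_{-}$ and the first step, this gives $\rho_{4}\Par{\gamma}=\text{id}$, as needed.

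I expect the main subtlety to be the bookkeeping of the first step: verifying that an element of $\text{Im}\Par{\rho_{4}}$ that is the identity on the finite-index sublattice $L_{+}\oplus L_{-}$ is genuinely the identity on $\cohomoldeg{2}{X}{\zz}$. Once this is pinned down via non-degeneracy and the rational extension, the chase through \Cref{commutativeDiagram} is purely formal, its only essential input being the injectivity of $\text{mod}\Par{L_{+}}$ on $\text{Im}\Par{\rho_{4}^{+}}$ — which is exactly what the identification $\text{Im}\Par{\rho_{4}^{+}}\cong W\Par{E_{7}}$ of \Cref{Rho4+} supplies, since then no nontrivial element of $\text{Im}\Par{\rho_{4}^{+}}$ can act trivially on the discriminant form $q_{L_{+}}$.
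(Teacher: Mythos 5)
Your proposal is correct and follows essentially the same route as the paper: a diagram chase through \Cref{commutativeDiagram} using the injectivity of $\text{mod}\Par{L_{+}}$ on $\text{Im}\Par{\rho_{4}^{+}}\cong W\Par{E_{7}}\cong O\Par{q_{L_{+}}}$ to deduce $\rho_{4}^{+}\Par{\gamma}=\text{id}$ from $\rho_{4}^{-}\Par{\gamma}=\text{id}$, followed by the observation that triviality on the finite-index sublattice $L_{+}\oplus L_{-}$ forces triviality on all of $\cohomoldeg{2}{X}{\zz}$. The only difference is ordering: you record the finite-index step first, while the paper does the diagram chase first.
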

	
	\begin{proof}
		First, we show that for any $\gamma\in \pi_{1}\Par{\mathcal{U}_{4}}$, if $\rho_{4}^{-}\Par{\gamma}$ is trivial, so is $\rho_{4}^{+}\Par{\gamma}$. Using the diagram in \Cref{commutativeDiagram}, we have that
		\begin{align*}
			\rho_{4}^{-}\Par{\gamma}=0&\implies \text{mod}\Par{L_{-}}\circ \rho_{4}^{-}\Par{\gamma} = 0 \\
			&\implies c_{\gamma_{L_{+}L_{-}}}\circ \text{mod}\Par{L_{+}}\circ \rho_{4}^{+}\Par{\gamma} = 0 \\
			(c_{\gamma_{L_{+}L_{-}}}\text{ is injective}) & \implies \text{mod}\Par{L_{+}}\circ \rho_{4}^{+}\Par{\gamma} = 0 \\
			(\text{mod}\Par{L_{+}}\text{ is injective}) & \implies \rho_{4}^{+}\Par{\gamma} = 0.
		\end{align*}
		Finally, we show that the map $\text{res}\Par{L_{-}}$ is injective. Let $\rho_{4}\Par{\gamma}\in \ker\Par{\text{mod}\Par{L_{-}}}$ for some $\gamma\in \pi_{1}\Par{\mathcal{U}_{4}}$. Then
		\[\rho_{4}^{-}\Par{\gamma}=0\implies \rho_{4}^{+}\Par{\gamma}=0.\]
		Hence, the action of $\rho_{4}\Par{\gamma}$ on $L_{+}\oplus L_{-}$ is trivial. Since $L_{+}\oplus L_{-}$ is a finite index sublattice of $\cohomoldeg{2}{X}{\zz}$, $\rho_{4}\Par{\gamma}$ acts trivially on $\cohomoldeg{2}{X}{\zz}$, so $\rho_{4}\Par{\gamma}=0$, implying the desired injectivity. Altogether, $\text{res}\Par{L_{-}}$ is an isomorphism onto its image and the proposition follows.
	\end{proof}
	
	
	\subsection{$\mathbb{Z}\left[i\right]$-module structure on $L_{-}$}\label{ZiModule}
	
	The action $T$ endows $L_{-}$ with a $\zz\BPar{i}$-module structure since $T^{2}+I$ acts trivially on $L_{-}$. Furthermore, the Hodge decomposition on $\cohomoldeg{2}{X}{\cc}$ provides $L_{-}$ a hermitian form induced by its intersection form. We study this structure on $L_{-}$ following the technique in \cite{ACT}.
	
	The action $T$ induces a decomposition
	\[\cohomoldeg{2}{X}{\cc}\cong\bigoplus_{\zeta^{4}=1}\cohomoldeg{2}{X}{\cc}_{\zeta}\cong \bigoplus_{\zeta^{4}=1}V_{\zeta}\]
	where $V_{\zeta}=\cohomoldeg{2}{X}{\cc}_{\zeta}:=\ker\Par{T-\zeta I}$. The eigenspaces $V_{i}, V_{-i}$ are conjugate and $L_{-}\otimes_{\zz}\cc\cong V_{i}\oplus V_{-i}$. Let $j:L_{-}\to V_{i}$ be the $\zz\BPar{i}$-linear composition map given by
	
	\begin{center}
		\begin{tikzcd}
			& \cohomoldeg{2}{X}{\cc}\arrow[dr, twoheadrightarrow, "\text{proj}_{V_{i}}"] & \\
			L_{-}\arrow[ur, hookrightarrow, "\nu"]\arrow[rr, "j"]& & V_{i}
		\end{tikzcd}
	\end{center}
	
	where $\nu:\cohomoldeg{2}{X}{\zz}\hookrightarrow \cohomoldeg{2}{X}{\cc}$ is the natural inclusion and $\text{proj}_{V_{i}}: \cohomoldeg{2}{X}{\cc}\to V_{i}$ is a projection. The bilinear form on $L_{-}$ can be extended $\zz\BPar{i}$-linearly to $L_{-}\otimes_{\zz\BPar{i}} \cc$ and $V_{i}$ can be given a hermitian form induced by the intersection pairing, given by $h\Par{a, b}=2\Bracket{a, \overline{b}}$.
	
	\begin{prop}\label{ViIsometricEmbedding}
		The map $j_{\cc}:L_{-}\otimes_{\zz\BPar{i}} \cc\to V_{i}$ is an isometric isomorphism. 
	\end{prop}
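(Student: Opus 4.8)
The plan is to prove the two halves of the statement separately: that $j_{\cc}$ is a $\cc$-linear isomorphism, which reduces to a dimension count together with surjectivity, and that it is an isometry, which is a direct computation carried out inside the eigenspace decomposition $\cohomoldeg{2}{X}{\cc}\cong V_{i}\oplus V_{-i}\oplus V_{1}\oplus V_{-1}$.

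First I would record the dimensions. Since $L_{-}$ has $\zz$-rank $14$ it has $\zz\BPar{i}$-rank $7$, so $\dim_{\cc}\Par{L_{-}\otimes_{\zz\BPar{i}}\cc}=7$. On the other hand $V_{i}$ and $V_{-i}$ are interchanged by complex conjugation, hence of equal dimension, and $V_{i}\oplus V_{-i}=L_{-}\otimes_{\zz}\cc$ has dimension $14$; thus $\dim_{\cc}V_{i}=7$ as well. It therefore suffices to show $j_{\cc}$ is surjective. The $\cc$-span of $\nu\Par{L_{-}}$ is all of $V_{i}\oplus V_{-i}$, and since $\text{proj}_{V_{i}}$ is a $\cc$-linear projection onto $V_{i}$, the span of $j\Par{L_{-}}=\text{proj}_{V_{i}}\Par{\nu\Par{L_{-}}}$ is all of $V_{i}$. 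Hence $j_{\cc}$ is onto, and by equality of dimensions it is an isomorphism.

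For the isometry I would use three elementary facts. First, the intersection form vanishes on $V_{i}\times V_{i}$ and on $V_{-i}\times V_{-i}$: for $a, b\in V_{i}$ one has $\Bracket{a, b}=\Bracket{Ta, Tb}=\Bracket{ia, ib}=-\Bracket{a, b}$, so $\Bracket{a, b}=0$, and likewise on $V_{-i}$; thus the pairing only couples $V_{i}$ with $V_{-i}$. Second, since each $x\in L_{-}$ is a real class and conjugation swaps $V_{\pm i}$, its $V_{-i}$-component is $\overline{j\Par{x}}$, so $\nu\Par{x}=j\Par{x}+\overline{j\Par{x}}$. Third, $T$ acts as $i$ on $V_{i}$ and as $-i$ on $V_{-i}$. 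With these in hand, for $x, y\in L_{-}$,
\[\Bracket{x, y}=\Bracket{j\Par{x}, \overline{j\Par{y}}}+\overline{\Bracket{j\Par{x}, \overline{j\Par{y}}}}=2\Re\Bracket{j\Par{x}, \overline{j\Par{y}}},\]
and the analogous computation with $Ty$ in place of $y$, where the $V_{-i}$-component picks up a factor $-i$, gives $\Bracket{x, Ty}=2\Im\Bracket{j\Par{x}, \overline{j\Par{y}}}$. Therefore $h\Par{j\Par{x}, j\Par{y}}=2\Bracket{j\Par{x}, \overline{j\Par{y}}}=\Bracket{x, y}+i\Bracket{x, Ty}$, which is precisely the Hermitian form obtained by extending the intersection pairing on $L_{-}$ along its $\zz\BPar{i}$-structure; extending sesquilinearly to $L_{-}\otimes_{\zz\BPar{i}}\cc$ shows $j_{\cc}$ is an isometry.

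The point requiring the most care is the bookkeeping around the base change: one must use the $\zz\BPar{i}$-linearity of $j$ — which holds because $\nu$ is $T$-equivariant and $T$ acts as the scalar $i$ on $V_{i}$ — to know that $j_{\cc}$ is well defined on $L_{-}\otimes_{\zz\BPar{i}}\cc$, and one must fix the embedding $\zz\BPar{i}\hookrightarrow\cc$ sending $i\mapsto i$ so that multiplication by $i$ on the source matches $T$ on $V_{i}$. Once these conventions are pinned down, the surjectivity and the form identity are both routine; the only genuinely substantive input is the isotropy of the eigenspaces $V_{\pm i}$, which forces the intersection form to be determined entirely by the $V_{i}$--$V_{-i}$ coupling and thereby identifies it with the Hermitian form $h$.
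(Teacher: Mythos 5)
Your proof is correct and follows essentially the same route as the paper's: decompose $a=j\Par{a}+\overline{j\Par{a}}$, use the isotropy of $V_{\pm i}$ to reduce the intersection form to the $V_{i}$--$V_{-i}$ coupling given by $h$, and conclude the isomorphism from the dimension count $\dim_{\cc}V_{i}=\dim_{\cc}\Par{L_{-}\otimes_{\zz\BPar{i}}\cc}=7$. You fill in two details the paper leaves implicit --- the verification that $V_{\pm i}$ are isotropic and the surjectivity of $j_{\cc}$ --- and the sign $\Bracket{x,y}+i\Bracket{x,Ty}$ versus the paper's later convention $\Bracket{x,y}-i\Bracket{x,Ty}$ is only the usual ambiguity of which argument of $h$ is taken conjugate-linear, which you appropriately flag.
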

	
	\begin{proof}
		Let $a\in L_{-}$. Since $V_{i}, V_{-i}$ are conjugate, $a = j\Par{a}+\overline{j\Par{a}}$. We then have that
		\begin{align*}
			\Bracket{a, a}&=\Bracket{j\Par{a}+\overline{j\Par{a}}, j\Par{a}+\overline{j\Par{a}}} \\
			&= \Bracket{j\Par{a}, j\Par{a}}+\Bracket{\overline{j\Par{a}}, \overline{j\Par{a}}}+2\Bracket{j\Par{a}, \overline{j\Par{a}}} \\
			h\Par{j\Par{a}, j\Par{a}} &=2\Bracket{j\Par{a}, \overline{j\Par{a}}}
		\end{align*}
		hence $j_{\cc}$ is an isometry. Since $\dim_{\cc}V_{i}=\dim_{\cc}L_{-}\otimes_{\zz\BPar{i}}\cc=7$, it follows that $j_{\cc}$ is an isomorphism.
	\end{proof}
	
	\subsection{The Hodge structure on $\cohomoldeg{2}{X}{\cc}$}
	
	The Hodge structure on $\cohomoldeg{2}{X}{\cc}$ is given by
	\begin{align*}
		\cohomoldeg{2}{X}{\cc}&\cong \cohomoldeg{2, 0}{X}{\cc}\oplus \cohomoldeg{1, 1}{X}{\cc}\oplus \cohomoldeg{0, 2}{X}{\cc} \\
		&\cong \cc\oplus \cc^{20}\oplus \cc
	\end{align*}
	and it is determined by the line $H^{2, 0}$ since $H^{0, 2}\cong \overline{H^{2, 0}}$ and $H^{1, 1}\cong \Par{H^{2, 0}\oplus H^{0, 2}}^{\perp}$. Since $T$ is of finite order, it is a biholomorphism and therefore respects the Hodge decomposition on $\cohomoldeg{2}{X}{\cc}$. This further decomposes the spaces $V_{\zeta}$ into mutually orthogonal subspaces
	\[V_{\zeta}\cong V_{\zeta}^{2, 0}\oplus V_{\zeta}^{1, 1}\oplus V_{\zeta}^{0, 2}.\]
	with respect to the intersection pairing on $\cohomoldeg{2}{X}{\cc}$, given by
	\[\Bracket{\alpha, \overline{\beta}}=\int_{X}\alpha\wedge\overline{\beta}.\]
	Let $h'=\Abs{z_{0}}^{2}-\Abs{z_{1}}^{2}-\cdots-\Abs{z_{6}}^{2}$ be the standard quadratic form of signature $\Par{1, 6}$.
	
	\begin{prop}\label{ViSignature}
		The hermitian vector space $\Par{V_{i}, h}$ is isomorphic to $\cc^{1, 6}=\Par{\cc^{7}, h'}$.
	\end{prop}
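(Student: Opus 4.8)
The plan is to determine the signature of the hermitian form $h$ on $V_{i}$ by restricting it to the three summands of the Hodge decomposition $V_{i}\cong V_{i}^{2,0}\oplus V_{i}^{1,1}\oplus V_{i}^{0,2}$, and then to invoke Sylvester's law of inertia for hermitian forms, which classifies nondegenerate hermitian forms over $\cc$ up to isometry by their signature. Thus it suffices to show that $h$ has signature $\Par{1,6}$, after which the isometry $\Par{V_{i}, h}\cong \cc^{1,6}=\Par{\cc^{7}, h'}$ is immediate.

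First I would pin down the dimensions of the three summands using that $T$ is purely non-symplectic. Since $T\omega = i\omega$ on the line $\cohomoldeg{2,0}{X}{\cc}$, the holomorphic $2$-form $\omega$ lies in $V_{i}$, so $\cohomoldeg{2,0}{X}{\cc}=V_{i}^{2,0}$ is $1$-dimensional. Because $\cohomoldeg{0,2}{X}{\cc}=\overline{\cohomoldeg{2,0}{X}{\cc}}$ and complex conjugation interchanges $V_{i}$ and $V_{-i}$, the $(0,2)$-part lies in $V_{-i}$, whence $V_{i}^{0,2}=0$. As $\dim_{\cc}V_{i}=7$, it follows that $\dim_{\cc}V_{i}^{1,1}=6$.

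Next I would compute the sign of $h$ on the two surviving summands. On $V_{i}^{2,0}=\cohomoldeg{2,0}{X}{\cc}$ we have $h\Par{\omega,\omega}=2\Bracket{\omega,\overline{\omega}}=2\int_{X}\omega\wedge\overline{\omega}>0$, the standard positivity of the holomorphic $2$-form on a $K3$ surface, which contributes a single positive direction. For the $(1,1)$-part the main step is to show that $h$ is negative definite on $V_{i}^{1,1}$. Here I would use that the real lattice $L_{-}$ has signature $\Par{2,12}$ (read off from $L_{-}\cong A_{1}^{\oplus 2}\oplus D_{4}^{\oplus 2}\oplus U\oplus U\Par{2}$), together with the identity $h\Par{j\Par{a}, j\Par{a}}=\Bracket{a,a}$ of \Cref{ViIsometricEmbedding}, which identifies the real quadratic form on $L_{-}\otimes\rr$ with the real part of $h$ pulled back along the $\cc$-linear isomorphism $j$. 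The two positive directions of $L_{-}\otimes\rr$ are spanned by $\mathrm{Re}\,\omega$ and $\mathrm{Im}\,\omega$, i.e. they come entirely from $V_{i}^{2,0}\oplus V_{-i}^{0,2}$; the complementary real $(1,1)$-classes are therefore negative definite, and hence so is $h$ on $V_{i}^{1,1}$.

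Combining these, $h$ has exactly one positive and six negative directions, so its signature is $\Par{1,6}$, and Sylvester's law of inertia yields $\Par{V_{i}, h}\cong \cc^{1,6}$. I expect the negative-definiteness on $V_{i}^{1,1}$ to be the main obstacle: it is the one place where a summand-by-summand local positivity argument is unavailable, and I must instead import the global signature of $L_{-}$ (equivalently, invoke the Hodge index theorem on $\cohomoldeg{1,1}{X}{\cc}$ together with the fact that the $T$-invariant Kähler class lies in $L_{+}$ and hence not in $V_{i}$). I note that, once the signature of $L_{-}$ is granted, the computation $\Par{2p, 2q}=\Par{2,12}$ relating the real signature of $L_{-}$ to the hermitian signature $\Par{p,q}$ of $h$ already forces $\Par{p,q}=\Par{1,6}$, so the Hodge-theoretic bookkeeping above mainly serves to locate the single positive direction in $\cohomoldeg{2,0}{X}{\cc}$.
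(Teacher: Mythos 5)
Your argument is correct and follows the same skeleton as the paper's: decompose $V_{i}$ by Hodge type, identify the $(2,0)\oplus(0,2)$ part as a positive line and the $(1,1)$ part as a negative-definite $6$-dimensional space, then conclude by Sylvester's law of inertia. The one substantive difference is the key step, negative definiteness on $V_{i}^{1,1}$. The paper works in the full $K3$ lattice: the K\"ahler class $\kappa$ together with $\omega,\overline{\omega}$ spans a maximal positive-definite $3$-space of the signature-$\Par{3,19}$ lattice $\cohomoldeg{2}{X}{\cc}$, forcing $V_{i}^{1,1}\oplus V_{-i}^{1,1}$, which is orthogonal to it, to be negative definite. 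You instead stay inside $L_{-}$: its real signature is $\Par{2,12}$, the positive $2$-plane is spanned by $\Re\omega$ and $\Im\omega$, and its orthogonal complement in $L_{-}\otimes\rr$ complexifies to $V_{i}^{1,1}\oplus V_{-i}^{1,1}$. Both are valid; yours avoids introducing the K\"ahler class but requires the signature of $L_{-}$ as input (available either from $L_{-}\cong A_{1}^{\oplus 2}\oplus D_{4}^{\oplus 2}\oplus U\oplus U\Par{2}$ or as the orthogonal complement of the signature-$\Par{1,7}$ lattice $L_{+}$), while the paper's version needs only the $\Par{3,19}$ signature of the $K3$ lattice. Your closing remark that the real signature $\Par{2,12}$ already forces the hermitian signature $\Par{1,6}$ via $\Par{2p,2q}=\Par{2,12}$ is a genuine shortcut the paper does not take; the Hodge-theoretic bookkeeping then only serves to locate the positive direction, which is what the period map construction later requires. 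One cosmetic point: you normalize $T\omega=i\omega$ so that $V_{i}^{0,2}=0$, whereas the paper only asserts $T\omega=\pm i\omega$ and accordingly treats $V_{i}^{2,0}\oplus V_{i}^{0,2}$ as a single line; this is harmless.
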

	
	\begin{proof}
		We will show that $h$ is positive-definite on $V_{i}^{2, 0}\oplus V_{i}^{0, 2}$ and negative-definite on $V_{i}^{1, 1}$. We will also show the former and latter spaces are $1$ and $6$ dimensional respectively, concluding the proof.
		
		Let $\omega\in \cohomoldeg{2, 0}{X}{\cc}$ be a holomorphic form. Then $T\omega$ must be holomorphic too. Since $V_{1}\oplus V_{-1}$ is spanned by rational clases, $V_{1}\oplus V_{-1}\subset \cohomoldeg{1, 1}{X}{\cc}$ and hence $T\omega\neq \pm \omega$. This implies that $T\omega = \pm i\omega$ and therefore $T\overline{\omega}=\mp i\overline{\omega}$, hence
		\[V_{i}^{2, 0}\oplus V_{i}^{0, 2}\cong V_{-i}^{2, 0}\oplus V_{-i}^{0, 2}\cong \cc.\]
		Moreover, the intersection pairing on $\omega$ satisfies
		\[h\Par{\omega, \omega}=2\Bracket{\omega, \overline{\omega}}=2\int_{X}\omega\wedge\overline{\omega}>0\]
		so $V_{i}^{2, 0}\oplus V_{i}^{0, 2}$ is positive-definite. The K\"ahler form $\kappa$ associated to $X$ lies in $\cohomoldeg{1, 1}{X}{\cc}$ and has positive intersection pairing with itself being an integral cohomology class, so $\Bracket{\kappa, \overline{\kappa}}>0$. Since $\cohomoldeg{2}{X}{\cc}$ has signature $\Par{3, 19}$, the classes $\omega, \overline{\omega}, \kappa$ span a maximal positive-definite subspace of $\cohomoldeg{2}{X}{\cc}$. This implies that $V_{i}^{1, 1}$ and $V_{-i}^{1, 1}$ are negative-definite and
		\[V_{i}^{1, 1}\cong V_{-i}^{1, 1}\cong \cc^{6}.\]
	\end{proof}
	
	\subsection{$\zz\BPar{i}$-lattice structure on $L_{-}$}
	
	We now turn to describe $L_{-}$ as a $\zz\BPar{i}$-lattice. A $\zz\BPar{i}$-lattice structure is induced on $L_{-}$ via the action $T$ by endowing $L_{-}$ with the hermitian form
	\[\Bracket{x, y}_{\zz\BPar{i}}=\Bracket{x, y}-i\Bracket{x, Ty}.\]
	
	\begin{prop}\label{ZiModuleOnL-}
		As a $\zz\BPar{i}$-lattice, $L_{-}$ is isomorphic to $\zz\BPar{i}^{7}$ equipped with the type $\Par{1, 6}$ quadratic form
		\[h_{L_{-}}=-2\Par{\Abs{z_{0}}^{2}
			+\Abs{z_{1}}^{2}
			+\Abs{z_{2}}^{2}
			+\Abs{z_{3}}^{2}
			+\Abs{z_{4}}^{2}
			-\Re\Par{{z_{1}\overline{z_{2}}}+{z_{3}\overline{z_{4}}}+{z_{5}\overline{z_{6}}}}
			-\Im\Par{{z_{1}\overline{z_{2}}}+{z_{3}\overline{z_{4}}}+{z_{5}\overline{z_{6}}}}}.\]
	\end{prop}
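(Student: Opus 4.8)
The plan is to exhibit an explicit $\zz\BPar{i}$-basis of $L_{-}$, compute the Gram matrix of the hermitian form $\Bracket{\,,}_{\zz\BPar{i}}$ in that basis, and recognize the result as $h_{L_{-}}$. First I would record the module-theoretic input: since $T^{2}=-I$ on $L_{-}$, the ring $\zz\BPar{i}=\zz[T]/(T^{2}+1)$ acts on $L_{-}$ with $i$ acting as $T$; because $L_{-}$ is free over $\zz$ it is torsion-free over $\zz\BPar{i}$, and since $\zz\BPar{i}$ is a PID this forces $L_{-}$ to be a free $\zz\BPar{i}$-module, necessarily of rank $\frac{1}{2}\operatorname{rk}_{\zz}L_{-}=7$. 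I would also verify once and for all that $\Bracket{x,y}_{\zz\BPar{i}}=\Bracket{x,y}-i\Bracket{x,Ty}$ is a genuine $\zz\BPar{i}$-hermitian form, using that $T$ is an isometry and $T^{2}=-I$ on $L_{-}$ to check conjugate-symmetry and $\zz\BPar{i}$-linearity.

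The heart of the argument is to produce a $T$-stable orthogonal decomposition of $L_{-}$ refining the $\zz$-lattice splitting $L_{-}\cong A_{1}^{\oplus 2}\oplus D_{4}^{\oplus 2}\oplus U\oplus U\Par{2}$ established in \Cref{L+L-}, and to identify the hermitian form on each $T$-stable block. Concretely, I expect: the summand $A_{1}^{\oplus 2}$, with $T$ interchanging the two $A_{1}$ factors up to sign, becomes a rank-$1$ Gaussian block contributing $-2\Abs{z_{0}}^{2}$; each $D_{4}$ carries the standard $\zz\BPar{i}$-structure (under which $D_{4}\otimes\qq$ is a rank-$2$ hermitian $\qq(i)$-space) contributing a block $-2\Par{\Abs{z}^{2}+\Abs{w}^{2}-\Re(z\overline{w})-\Im(z\overline{w})}$; and the even indefinite piece $U\oplus U\Par{2}$ becomes a hyperbolic $\zz\BPar{i}$-plane contributing $2\Re(z_{5}\overline{z_{6}})+2\Im(z_{5}\overline{z_{6}})$. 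On each block the computation is the mechanical evaluation of $\Bracket{x,y}-i\Bracket{x,Ty}$ on the chosen generators, where the overall factor $2$ traces back to the normalization $h\Par{a,b}=2\Bracket{a,\overline{b}}$ from \Cref{ViIsometricEmbedding}. Assembling the blocks yields exactly $h_{L_{-}}$.

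The main obstacle is this second step: the $\zz$-lattice isomorphism above is not a priori compatible with $T$, so I cannot simply read off the $\zz\BPar{i}$-structure summand by summand. I would instead pin down the $T$-action directly and build the orthogonal decomposition by hand, using two rigidity inputs as consistency checks: the signature $\Par{1,6}$ of $\Par{V_{i},h}$ from \Cref{ViSignature}, which must match the signature of the candidate form $h_{L_{-}}$ over $\cc$, and the discriminant data $A_{L_{-}}\cong\Par{\zz/2\zz}^{\oplus 8}$ together with its $T$-action recorded in \Cref{L+L-}. Finally, I would invoke uniqueness: an indefinite hermitian $\zz\BPar{i}$-lattice of this rank and signature with the prescribed discriminant form is determined up to isometry (a hermitian analogue of the Nikulin classification already used for $L_{\pm}$), so matching rank, signature, and discriminant against the explicit $h_{L_{-}}$ suffices even if the hand-built basis is not completely canonical. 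This is precisely the point where I lean on the $\zz\BPar{i}$-lattice computations of Allcock--Carlson--Toledo \cite{ACT} and Kond\={o} \cite{kondoBook}, who carry out the analogous identification.
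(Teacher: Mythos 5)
Your proposal matches the paper's proof in its essentials: the paper likewise computes the Gram matrix of $\Bracket{\,,}_{\zz\BPar{i}}$ block by block on the splitting $A_{1}^{\oplus 2}\oplus D_{4}^{\oplus 2}\oplus U\oplus U\Par{2}$, with explicit $\zz\BPar{i}$-generators for each summand, and obtains exactly the three block forms you predict. The compatibility of $T$ with this splitting --- the obstacle you correctly flag --- is resolved in the paper simply by quoting the explicit block-diagonal description of $T$ from Chapter 10 of \cite{kondoBook}, so neither your hand-built decomposition nor the hermitian classification fallback is needed (and note the factor $-2$ in each block comes directly from the root norms, e.g. $\Bracket{u,u}=-2$ in $A_{1}$, not from the normalization $h\Par{a,b}=2\Bracket{a,\overline{b}}$ on $V_{i}$).
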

	
	\begin{proof}
		In Chapter 10 of \cite{kondoBook}, the action of $T$ on
		\[L_{-}\cong A_{1}^{\oplus 2}\oplus D_{4}^{\oplus 2}\oplus U\oplus U\Par{2}\]
		is described. This action is diagonal, acting on $A_{1}^{\oplus 2}$, $U\oplus U\Par{2}$ and each copy of $D_{4}$ by blocks. This implies that each of these summands carries a $\zz\BPar{i}$-lattice structure, and in order to determine the quadratic form on $L_{-}$, it suffices to determine the quadratic form on each summand separately.
		\begin{itemize}
			\item $A_{1}^{\oplus 2}$ : As a $\zz$-lattice, it is generated by $u, v$ and $T$ acts by $Tu= v$. Hence, $u$ generates $A_{1}^{\oplus 2}$ as a $\zz\BPar{i}$-module. Since $\Bracket{u, u}_{\zz\BPar{i}}=-2$, the quadratic form on $A_{1}^{\oplus 2}$ is given by the $\Par{0, 1}$ form
			\[h_{A_{1}^{\oplus 2}}=\Bracket{z, z}_{\zz\BPar{i}}=-2\Abs{z}^{2}.\]
			
			\item $U\oplus U\Par{2}$ : As a $\zz$-lattice, it is generated by $e, f, e', f'$ where $e, f$ generate $U$ and $e', f'$ generate $U\Par{2}$. The action of $T$ is given by 
			\[
			Te=-e-e'\quad\quad\quad Tf=f-f'
			\]
			and thus $e, f$ generate $U\oplus U\Par{2}$ as a $\zz\BPar{i}$-module. We then have
			\[\begin{pmatrix}
				\Bracket{e, e}_{\zz\BPar{i}} & \Bracket{e, f}_{\zz\BPar{i}} \\
				\Bracket{f, e}_{\zz\BPar{i}} & \Bracket{f, f}_{\zz\BPar{i}}
			\end{pmatrix}=
			\begin{pmatrix}
				0 & 1-i \\
				1+i & 0
			\end{pmatrix}
			\]
			hence the quadratic form on $U\oplus U\Par{2}$ is given by the $\Par{1, 1}$ form
			\[h_{U\oplus U\Par{2}}=\Bracket{\Par{z, w}, \Par{z, w}}_{\zz\BPar{i}}=2\Re\Par{z\overline{w}}+2\Im\Par{z\overline{w}}.\]
			
			\item $D_{4}$ : As a $\zz$-lattice, $D_{4}$ is isomorphic to
			\[\WBPar{\Par{x_{1}, x_{2}, x_{3}, x_{4}}\subset \zz^{4}\mid x_{1}+x_{2}+x_{3}+x_{4}\equiv 0\pmod{2}}.\]
			equipped with the negative dot product. The basis $\WBPar{\Par{1, 1, 0, 0}, \Par{-1, 1, 0, 0}, \Par{0, -1, 1, 0}, \Par{0, 0, -1, 1}}$ gives the usual Cartan matrix describing the $\zz$-lattice structure of $D_{4}$. The action of $T$ is given by
			\[T\Par{x_{1}, x_{2}, x_{3}, x_{4}}=\Par{x_{2}, -x_{1}, x_{4}, -x_{3}}.\]
			Altogether, $p = (1, 1, 0, 0)$ and $q = (0, -1, 1, 0)$ generate $D_{4}$ as a $\zz\BPar{i}$-module. We then have
			\[\begin{pmatrix}
				\Bracket{p, p}_{\zz\BPar{i}} & \Bracket{p, q}_{\zz\BPar{i}} \\
				\Bracket{q, p}_{\zz\BPar{i}} & \Bracket{q, q}_{\zz\BPar{i}}
			\end{pmatrix}=
			\begin{pmatrix}
				-2 & 1-i \\
				1+i & -2
			\end{pmatrix}
			\]
			and thus the quadratic form on $D_{4}$ is given by the $\Par{0, 2}$ form
			\[h_{D_{4}}=\Bracket{\Par{z, w}, \Par{z, w}}_{\zz\BPar{i}}=-2\Abs{z}^{2}-2\Abs{w}^{2}+2\Re\Par{z\overline{w}}+2\Im\Par{z\overline{w}}.\]
		\end{itemize}
	\end{proof}
	
	\subsection{Realization of the Deck group via monodromy}
	
	Before proceeding to compute $\text{Im}\Par{\rho_{4}^{-}}$, we will show that the action $T$ on $\cohomoldeg{2}{X}{\zz}$ induced by Deck group is realized via monodromy. By \Cref{Geiser}, there is a loop $\gamma$ realizing the Geiser involution action on $\cohomoldeg{2}{\mathcal{P}}{\zz}$. Then the following holds:
	
	\begin{prop}\label{TrealizedByGamma}
		$\rho_{4}\Par{\gamma}=T$.
	\end{prop}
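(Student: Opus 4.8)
The plan is to produce the loop $\gamma$ by an explicit geometric construction and read off its monodromy on the two covers simultaneously. Model the covers by their defining equations: for the chosen quartic form $f$, take $\mathcal{P}=\{v^{2}=f\}\subset \pp\Par{1,1,1,2}$ and $X=\{w^{4}=f\}\subset \pp\Par{1,1,1,1}$, with $p\colon X\to \mathcal{P}$ given by $v=w^{2}$. In these coordinates the Geiser involution is $\tau\colon v\mapsto -v$ and the deck generator is $T\colon w\mapsto i\,w$, so that $T^{2}\colon w\mapsto -w$ generates the deck group of $p$ and $p\circ T=\tau\circ p$. First I would run the scaling path $f_{s}=e^{2\pi i s}f$, $s\in[0,1]$, and trivialize the two families along it by $v\mapsto e^{\pi i s}v$ and $w\mapsto e^{\pi i s/2}w$; these trivializations are compatible with $p$ and return $f$ to itself at $s=1$. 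Evaluating at $s=1$ gives the fibrewise maps $v\mapsto -v$ and $w\mapsto i\,w$, i.e. the monodromy of this loop is exactly $\tau$ on $\mathcal{P}$ and $T$ on $X$. Hence the loop realizing the Geiser involution on $\cohomoldeg{2}{\mathcal{P}}{\zz}$ also realizes $T$ on $\cohomoldeg{2}{X}{\zz}$, which is the claim $\rho_{4}\Par{\gamma}=T$.

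I would also record the algebraic consistency on $L_{+}$, which both confirms the geometric computation and connects it to the diagram machinery. From $p\circ T=\tau\circ p$ we get $T^{\ast}\circ p^{\ast}=p^{\ast}\circ \tau^{\ast}$, so on $L_{+}=p^{\ast}\cohomoldeg{2}{\mathcal{P}}{\zz}$ the automorphism $T$ acts as $c_{p^{\ast}}\Par{\tau^{\ast}}$. On the other hand \Cref{Rho4+} identifies $\rho_{4}\Par{\gamma}\vert_{L_{+}}$ with $c_{p^{\ast}}\Par{\rho_{2}\Par{\gamma}}$, and $\rho_{2}\Par{\gamma}=\tau$ by \Cref{Geiser}; so $\rho_{4}\Par{\gamma}$ and $T$ agree on $L_{+}$. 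Since the geometric computation gives agreement on $L_{-}$ as well ($T$ acting there as multiplication by $i$), the two automorphisms coincide on the finite-index sublattice $L_{+}\oplus L_{-}$ of $\cohomoldeg{2}{X}{\zz}$ (\Cref{orthogonal}), hence on all of $\cohomoldeg{2}{X}{\zz}$ since it is torsion free. This also shows the conclusion is robust: one only needs the loop to induce $\tau$ on $\mathcal{P}$ and to act as multiplication by $i$ on $L_{-}$.

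The step I expect to be the main obstacle is that the scaling path is null-homotopic in $\mathcal{U}_{4}=\pp^{N\Par{4}}\setminus \Delta_{4}$, because $[e^{2\pi i s}f]=[f]$ is constant there; so it is not literally a loop in $\pi_{1}\Par{\mathcal{U}_{4}}$ and cannot be taken verbatim as the $\gamma$ of \Cref{Geiser}. The fix is to carry out the computation in the space of quartic forms $\hat{\mathcal{U}}_{4}$, the total space of the $\cc^{\ast}$-bundle over $\mathcal{U}_{4}$ on which the covers $v^{2}=f$ and $w^{4}=f$ are honestly defined; there the scaling path is a genuine loop generating the $\cc^{\ast}$-fibre, with monodromy $\tau$ on $\mathcal{P}$ and $T$ on $X$. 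One must then check that, under $\pi_{1}\Par{\hat{\mathcal{U}}_{4}}\to \pi_{1}\Par{\mathcal{U}_{4}}$, this class matches the element realizing the Geiser involution, i.e. that the deck transformation is precisely the monodromy contributed by the $\cc^{\ast}$-fibre. Reconciling the affine and projective pictures in this way is the delicate point, since distinct lifts of a given $\gamma\in\pi_{1}\Par{\mathcal{U}_{4}}$ differ exactly by powers of the $\cc^{\ast}$-class; the agreement on $\mathcal{P}$ (equivalently on $L_{+}$, via \Cref{commutativeDiagram}) is what fixes the ambiguity and guarantees that the element so produced is $T$ rather than another lift.
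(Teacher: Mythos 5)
Your second paragraph is sound as far as it goes: $\rho_{4}\Par{\gamma}$ and $T$ do agree on $L_{+}$ by \Cref{Rho4+} and \Cref{Geiser}, and agreement on the finite-index sublattice $L_{+}\oplus L_{-}$ would indeed force agreement on all of $\cohomoldeg{2}{X}{\zz}$. But the load-bearing step --- agreement on $L_{-}$ --- rests entirely on the scaling-loop computation, and there the argument has a genuine gap that your proposed fix does not close. The loop $f_{s}=e^{2\pi i s}f$ is \emph{constant} in $\mathcal{U}_{4}$, hence trivial in $\pi_{1}\Par{\mathcal{U}_{4}}$, whereas $\gamma$ is nontrivial there (it maps to the Geiser involution under $\rho_{2}$). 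So under $\pi_{1}\Par{\hat{\mathcal{U}}_{4}}\to\pi_{1}\Par{\mathcal{U}_{4}}$ the scaling class can never ``match the element realizing the Geiser involution'': it lies in the kernel of that map and $\gamma$ does not. Your computation therefore determines the monodromy of a different element (the fibre class of the $\cc^{\ast}$-bundle, in a family that lives over the space of forms), and says nothing about $\rho_{4}\Par{\gamma}\vert_{L_{-}}$. Nor can the agreement on $L_{+}$ ``fix the ambiguity'': an element of $\text{Im}\Par{\rho_{4}}$ is constrained by its $L_{+}$-action only through the glue group $A_{L_{-}}$, and the kernel of $O\Par{L_{-}}\to O\Par{q_{L_{-}}}$ is enormous --- indeed the whole content of \Cref{TheoremK3} is that the $L_{-}$-action sweeps out all of $U\Par{h_{L_{-}}}$ while the $L_{+}$-action stays inside the finite group $W\Par{E_{7}}$. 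So there are many candidates agreeing with $T$ on $L_{+}$ but not on $L_{-}$, and the proposition is not reduced to the $L_{+}$ statement.

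The paper's proof is entirely different and works directly with the given $\gamma$. Since $\rho_{2}\Par{\gamma}$ is the Geiser involution, $\gamma$ swaps the two lines of $\mathcal{P}$ over each bitangent, hence fixes each of the $28$ bitangents setwise and therefore fixes every pairwise intersection point $P_{ij}$; because at most $4$ bitangents are concurrent (\Cref{atMost4BitangentsConcur}), each bitangent contains at least $9$ such fixed points and is fixed pointwise (\Cref{BitangentsFixed}). The induced isometry $\Phi_{\gamma}$ of $\pp^{2}$ then has identity differential at each $P_{ij}$ and must be the identity (\Cref{p^2Fixed}), so $\gamma$ acts on $\mathcal{P}$ and on $X$ by deck transformations. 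Finally, a short combinatorial argument on the four preimages $Q_{\pm 1},Q_{\pm i}$ of a point of $\pp^{2}$ shows that $\gamma$ cannot act as the involution $\Par{12}\Par{34}$ because that permutation does not commute with the $4$-cycle realized by $t$, forcing $\gamma$ to act as $t^{\pm 1}$, i.e. $\rho_{4}\Par{\gamma}=T$. If you want to salvage your route, you would have to prove that the family $\mathcal{E}_{4,4}\to\mathcal{U}_{4}$ of the paper pulls back to the naive family $\WBPar{w^{4}=f}$ over $\hat{\mathcal{U}}_{4}$ and then identify the image of the fibre class in $\text{Im}\Par{\rho_{4}}$ honestly; that is a delicate descent question about twists by roots of $\mathcal{O}\Par{1}$ in $\text{Pic}\Par{\mathcal{U}_{4}}$, not a formality, and it is exactly what the paper's geometric argument avoids.
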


	To prove this, we show that $\gamma$ realizes the Deck group action on $\mathcal{P}$, and consequently on $X$. We will see this by first observing that $\gamma$ fixes $\pp^{2}$, as it fixes every bitangent to a base point quartic in $\mathcal{U}_{4}$.
	
	\begin{lem}\label{atMost4BitangentsConcur}
		If $n$ bitangents to a smooth quartic curve are concurrent, then $n\leq 4$.
	\end{lem}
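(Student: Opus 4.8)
The plan is to bound the number of concurrent bitangents to a smooth plane quartic $C = V(f)$ using the geometry of bitangents together with the structure of the $28$ bitangents as odd theta characteristics. A bitangent to $C$ is a line meeting $C$ with even contact at each of its two tangency points, i.e. a line $\ell$ with $\ell \cdot C = 2P + 2Q$ as divisors on $\ell$. If $n$ bitangents all pass through a common point $R \in \pp^2$, then each of these bitangents is a line through $R$ that is tangent to $C$ at two points. The key idea is to translate concurrency into a statement about the tangent lines of $C$ passing through a fixed point, which are governed by the \emph{dual curve} $C^{\vee} \subset (\pp^2)^{\vee}$ and the polar/projection from $R$.

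**First I would** project $C$ from the point $R$. The projection $\pi_R : C \to \pp^1$ is a degree $4$ map (a general line through $R$ meets the quartic in $4$ points), and its ramification is controlled by Riemann--Hurwitz. Since $C$ is smooth of genus $3$, we have $2g - 2 = 4 = \deg(\pi_R) \cdot (-2) + \deg(\mathrm{Ram})$, giving $\deg(\mathrm{Ram}) = 4 + 8 = 12$ ramification points counted with multiplicity, \emph{provided} $R \notin C$ and the projection has only simple ramification generically. A line $\ell$ through $R$ tangent to $C$ at a single smooth point contributes one simple ramification point; a \emph{bitangent} through $R$ is tangent at two distinct points $P, Q$, hence contributes \emph{two} ramification points to $\pi_R$. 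Thus if $n$ bitangents pass through $R$, they account for $2n$ of the $12$ ramification points, forcing $2n \le 12$, i.e.\ $n \le 6$. To sharpen this from $6$ to $4$ I would argue that the remaining ramification cannot all be absorbed, or more cleanly, use that a point through which many bitangents pass would force $C$ to be singular or to contain a special configuration.

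**The cleaner route** I would actually pursue is via the theta-characteristic/quadratic-form structure already invoked in the paper through \Cref{StabBitangent}: the $28$ bitangents correspond to the $28$ odd theta characteristics, equivalently to the $28$ nonzero vectors $v$ of a fixed type in the symplectic $\ff_2$-space $(\ff_2^6, \langle\,,\rangle)$ underlying $\mathrm{Sp}_6(\ff_2)$, and classical theory (Aronhold, and the analysis in \cite{harris}) shows that a collection of bitangents is \emph{syzygetic} or \emph{azygetic} according to the $\ff_2$-linear relations among the associated characteristics. A set of concurrent bitangents must be mutually syzygetic, and the maximal size of such a configuration realized by concurrency is bounded by the maximal isotropic-type subset, which caps the count at $4$. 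Concretely, three bitangents $\ell_1, \ell_2, \ell_3$ are concurrent only if their theta characteristics sum appropriately, and a fourth concurrent bitangent is determined up to the syzygetic relation; a fifth would violate the genericity/smoothness of $C$. The main obstacle is pinning down this last step rigorously: I expect the genuine difficulty to lie in excluding $n = 5$ and $n = 6$, since the naive Riemann--Hurwitz bound only gives $n \le 6$, and ruling out the boundary cases requires either a careful second-order (inflection/flecnode) analysis of $\pi_R$ or the precise syzygy combinatorics of the $28$ bitangents. I would therefore present the Riemann--Hurwitz bound to get $n \le 6$ and then invoke the classical fact, attributable to the theory of bitangents (e.g.\ via \cite{harris} or \cite{dolgachev}), that at most four bitangents of a smooth quartic are concurrent, completing the proof.
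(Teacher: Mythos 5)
There is a genuine gap: your argument establishes $n\leq 6$ but never actually excludes $n=5$ and $n=6$, and you acknowledge this yourself. The Riemann--Hurwitz computation is fine as far as it goes (projection from $R\notin C$ has degree $4$, total ramification $12$, each bitangent through $R$ accounts for two simple ramification points, hence $2n\leq 12$), but the sharpening to $4$ is exactly the content of the lemma. The syzygetic route is asserted rather than proved: the claims that concurrent bitangents ``must be mutually syzygetic'' and that such configurations ``cap the count at $4$'' are not justified, and your final step --- invoking ``the classical fact that at most four bitangents of a smooth quartic are concurrent'' --- is circular, since that fact is precisely the statement being proved. As written, the proposal is an honest reduction to $n\leq 6$ plus a citation of the lemma itself.

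The paper's proof closes the gap with a short surface-geometric argument that you should compare against. Lift the $n$ concurrent bitangents to $n$ lines through a common point of the degree $2$ del Pezzo surface $\mathcal{P}$ (each bitangent $\ell_i$ has two lines $L_i, L_i^{\ast}$ above it, and exactly one of them passes through a chosen preimage of the concurrence point). Fixing one such line $L_1$, the remaining $n-1$ lines all meet $L_1$, hence are disjoint from $L_1^{\ast}$; blowing down $L_1^{\ast}$ yields a smooth cubic surface $S$ in which these $n-1$ lines remain lines and remain pairwise incident. On a smooth cubic surface a line meets exactly $10$ others, grouped into $5$ coplanar (tritangent) pairs, and lines from distinct tritangent planes through a given line are disjoint; consequently no $4$ lines of $S$ are pairwise incident, giving $n-1\leq 3$. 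If you want to salvage your approach, you would need to either carry out the second-order analysis of $\pi_R$ at the boundary cases or supply an actual proof of the syzygy bound; the blow-down argument avoids both.
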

	
	\begin{proof}
		Suppose $n$ bitangents are concurrent at a point. We may lift these lines to $n$ concurrent lines in $\mathcal{P}$. Letting $L_{i}$ be one of these $n$ lines, blowing down $L_{i}^{\ast}$ produces a smooth cubic surface $S$ and the remaining $n-1$ lines are concurrent within $S$. Nevertheless, every line in a cubic surface intersects $10$ other lines, which can be separated into $2$ disjoint sets of $5$ pairwise non-intersecting lines. This implies that any $4$ lines in $S$ cannot be pairwise intersecting, and thus $n - 1\leq 3$ or $n \leq 4$, as we wanted.
	\end{proof}
	
	\begin{lem}\label{BitangentsFixed}
		$\gamma$ fixes every point in every bitangent to a given smooth quartic curve.
	\end{lem}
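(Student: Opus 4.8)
The plan is to bootstrap from the weaker statement that $\gamma$ preserves each bitangent \emph{as a line} to the desired statement that it fixes each bitangent \emph{pointwise}. The starting point is \Cref{Geiser}: the loop $\gamma$ realizes the Geiser involution $\tau$, and $\tau$ sends each of the $56$ lines of $\mathcal{P}$ to its dual. Since a dual pair $\Par{L, L^{\ast}}$ lies over a single bitangent of $V\Par{f}$, the induced permutation of the $28$ bitangents is trivial: transporting any bitangent $\ell$ around $\gamma$ returns it to itself. I would record this as the first step.

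Next I would produce a large supply of fixed points on each bitangent. For two distinct bitangents $\ell$ and $\ell'$, their intersection $\ell\cap\ell'$ is a single point of $\pp^{2}$; as $\gamma$ fixes both $\ell$ and $\ell'$ setwise, it fixes this point. The role of \Cref{atMost4BitangentsConcur} is precisely to guarantee that these intersection points do not all collapse: since at most $4$ bitangents are concurrent, at most $3$ of the remaining $27$ bitangents can meet $\ell$ at a common point, so they cut out at least $\lceil 27/3\rceil = 9$ distinct points of $\ell$. In particular $\gamma$ fixes at least three distinct points of each bitangent, which is all that the final step will require.

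The concluding step is a rigidity argument. The $28$ bitangents, together with their mutual intersection points, depend algebraically on $f\in\mathcal{U}_{4}$, and this dependence is unramified over $\mathcal{U}_{4}$ because every quartic in $\mathcal{U}_{4}$ is smooth and hence has exactly $28$ distinct bitangents. I would therefore arrange that the monodromy of $\gamma$ restricts on each bitangent $\ell\cong\pp^{1}$ to a holomorphic automorphism, i.e.\ an element of $\mathrm{PGL}_{2}\Par{\cc}$. Since such an element fixing three distinct points is the identity, $\gamma$ would then fix every point of $\ell$; as $\ell$ is arbitrary, this proves the lemma.

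The main obstacle is exactly the justification, in the last step, that the monodromy acts on each bitangent by a genuine projectivity rather than by a mere diffeomorphism fixing the marked points. A priori the geometric monodromy of the family is only well defined up to isotopy, and an isotopy rel the quartic can move interior points of a bitangent; fixing three points would then yield only a map isotopic to the identity, not the identity itself. I would resolve this by working throughout with the holomorphic family of bitangent lines over $\mathcal{U}_{4}$ and transporting points of $\ell$ via the projective structure pinned down by the (fixed) intersection points, so that the transport is literally an element of $\mathrm{PGL}_{2}\Par{\cc}$ preserving cross-ratios and the three fixed points force pointwise fixing. Verifying that this projective transport is compatible with the monodromy diffeomorphism later used to act on $\cohomoldeg{2}{X}{\zz}$ is the delicate point the argument must address.
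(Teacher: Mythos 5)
Your proposal follows essentially the same route as the paper: the Geiser involution swaps each dual pair $\Par{L_{i}, L_{i}^{\ast}}$ lying over a single bitangent, so $\gamma$ preserves each bitangent setwise, fixes all pairwise intersection points $P_{ij}$, and by \Cref{atMost4BitangentsConcur} accumulates at least $9$ fixed points on each bitangent, which forces pointwise fixing. The rigidity issue you flag at the end (a diffeomorphism versus a genuine projectivity/isometry) is exactly the point the paper handles immediately after the lemma by choosing the representative $\Phi_{\gamma}$ to be an isometric extension of $\varphi_{\gamma}$, so your extra care there is consistent with, rather than divergent from, the paper's argument.
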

	
	\begin{proof}
		Let $f\in \mathcal{U}_{4}$ be a base point curve, and let $\ell_{1}, \ell_{2}, \ldots, \ell_{28}$ be the $28$ bitangent lines to $V\Par{f}$ in $\pp^{2}$. For each pair of indices $1\leq i < j\leq 28$ let
		\[P_{ij}=\ell_{i}\cap \ell_{j}.\]
		Let $L_{i}, L_{i}^{\ast}$ be the pair of lines in $\mathcal{P}$ lying over the bitangent $\ell_{i}$. The lines $\WBPar{L_{i}, L_{i}^{\ast}}$ are interchanged by $\gamma$, thus leaving each bitangent $\ell_{i}$ fixed. Therefore, $\gamma$ fixes each point $P_{ij}$ in $\pp^{2}$. Any two distinct bitangents to $V\Par{f}$ must intersect due to Bezout's theorem. Some of these bitangents may be concurrent, but at any concurrence point, at most $4$ bitangents may meet by \Cref{atMost4BitangentsConcur}. Hence, $\gamma$ fixes at least $9$ points within each bitangent $\ell_{i}$, therefore fixing all of $\ell_{i}$.
	\end{proof}
	
	Let $\varphi_{\gamma}:V\Par{f}\to V\Par{f}$ be the automorphism on $V\Par{f}$ induced by $\gamma$. Let $\Phi_{\gamma}:\pp^{2}\to \pp^{2}$ be an extension of $\varphi_{\gamma}$ such that $\Phi_{\gamma}\vert_{V\Par{f}}=\varphi_{\gamma}$ (see Theorem 4.2 in \cite{hirose}) and $\Phi_{\gamma}$ agrees with the monodromy action of $\gamma$ on $V\Par{f}$. Namely, $\Phi_{\gamma}$ fixes each bitangent $\ell_{i}$ and $\Phi_{\gamma}$ is an isometry of $\pp^{2}$.
	
	\begin{prop}\label{p^2Fixed}
		$\Phi_{\gamma}$ is the identity map on $\pp^{2}$.
	\end{prop}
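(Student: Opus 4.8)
The plan is to show that $\Phi_{\gamma}$, an isometry of $\pp^{2}$ (i.e. an element of $\mathrm{PGL}_{3}(\cc)$ preserving the relevant structure) that fixes each of the $28$ bitangents $\ell_{1}, \ldots, \ell_{28}$ as a set, must be the identity. The core idea is that fixing enough lines in $\pp^{2}$ forces an automorphism of $\pp^{2}$ to be trivial: a projective linear automorphism of $\pp^{2}$ that fixes four lines in general position is already the identity, so I will exploit the abundance of bitangents and their intersection points to pin down $\Phi_{\gamma}$ completely.

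First I would record what \Cref{BitangentsFixed} gives us: not only does $\gamma$ (and hence $\Phi_{\gamma}$) fix each bitangent $\ell_{i}$ setwise, but in fact it fixes every point on every bitangent. This is the crucial upgrade, since the bitangents collectively cover a large portion of $\pp^{2}$. The next step is to argue that the union $\bigcup_{i=1}^{28} \ell_{i}$ contains a projective frame, or more directly, that any automorphism of $\pp^{2}$ fixing every point of even a few of these lines must be the identity. Concretely, if $\Phi_{\gamma}$ fixes every point of two distinct lines $\ell_{i}$ and $\ell_{j}$, then it fixes a configuration that already spans: two distinct lines meet in a point and together contain infinitely many points in general position. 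A projective transformation fixing every point of two distinct lines pointwise is forced to be the identity, since such a transformation fixes at least $5$ points in general position (three on $\ell_{i}$, two more on $\ell_{j}$ away from the intersection), and a $\mathrm{PGL}_{3}$ element fixing a projective frame of four points in general position is the identity.

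The cleanest route, then, is: take any two distinct bitangents $\ell_{1}, \ell_{2}$, which are guaranteed to exist and to be distinct lines. By \Cref{BitangentsFixed}, $\Phi_{\gamma}$ fixes every point of $\ell_{1}$ and every point of $\ell_{2}$ pointwise. Choose points $A, B$ on $\ell_{1}$ distinct from $\ell_{1}\cap\ell_{2}$, and points $C, D$ on $\ell_{2}$ distinct from $\ell_{1}\cap\ell_{2}$, arranged so that no three of $A, B, C, D$ are collinear; this is possible since $\ell_{1}\neq\ell_{2}$. These four points form a projective frame, all fixed by $\Phi_{\gamma}$, so $\Phi_{\gamma}$ acts as the identity on $\pp^{2}$.

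The main obstacle I anticipate is purely a matter of care in selecting the four points in general position rather than any deep difficulty: one must verify that the chosen points on the two lines are genuinely in general position (no three collinear), which requires only that $A, B$ and $C, D$ avoid the common intersection point $\ell_{1}\cap\ell_{2}$ and that the lines are distinct. Since $\Phi_{\gamma}$ fixes \emph{every} point of both lines, there is ample freedom to choose such a frame, so the argument goes through without friction. It remains only to conclude, from $\Phi_{\gamma}$ being the identity on $\pp^{2}$, that $\gamma$ induces the identity on the base $\pp^{2}$ and hence that the monodromy $\rho_{4}(\gamma)$ acts as the Deck transformation $T$ on the cover $X$, which is the content of the subsequent \Cref{TrealizedByGamma}.
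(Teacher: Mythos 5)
Your argument is correct, but it takes a genuinely different route from the paper's. You use projective rigidity: since $\Phi_{\gamma}$ fixes every point of two distinct bitangents $\ell_{1},\ell_{2}$ (by \Cref{BitangentsFixed}), it fixes four points in general position, and an element of $\mathrm{PGL}_{3}\Par{\cc}$ fixing a projective frame is the identity (equivalently, a linear map of $\cc^{3}$ for which every vector in each of two distinct planes is an eigenvector is a scalar). The paper instead uses Riemannian rigidity: at an intersection point $P_{ij}$ of two bitangents, the differential $D_{P_{ij}}\Phi_{\gamma}$ is the identity on the two independent complex directions spanned by $\ell_{i}$ and $\ell_{j}$, hence on all of $T_{P_{ij}}\pp^{2}$, and an isometry of a connected complete Riemannian manifold with identity $1$-jet at a point is the identity. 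Both arguments consume the same input, namely \Cref{BitangentsFixed}. The one point you should not gloss over is your parenthetical identification of ``isometry of $\pp^{2}$'' with ``element of $\mathrm{PGL}_{3}\Par{\cc}$'': the paper only asserts that the extension $\Phi_{\gamma}$ is an isometry, and the full isometry group of the Fubini--Study metric also contains anti-holomorphic maps, which are not projective-linear. This is easily repaired --- an anti-holomorphic map restricted to a complex line it preserves is anti-holomorphic and so cannot be the identity there, hence $\Phi_{\gamma}$ fixing $\ell_{1}$ pointwise already forces it to be holomorphic and to lie in $\mathrm{PU}\Par{3}\subset \mathrm{PGL}_{3}\Par{\cc}$ --- but it needs to be said. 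The paper's Riemannian argument sidesteps this issue entirely since it applies to arbitrary isometries; yours is more elementary once linearity is in hand.
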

	
	\begin{proof}
		Note $\varphi_{\gamma}$ must be the identity map since $\gamma$ fixes all bitangents to $V\Par{f}$, which completely determine the quartic curve curve $V\Par{f}$. At each point $P_{ij}$ in $\pp^{2}$, the differential
		\[D_{P_{ij}}\Phi_{\gamma}:T_{P_{ij}}\pp^{2}\to T_{P_{ij}}\pp^{2}\]
		is the identity map, as it is the identity in the directions spanned by $\ell_{i}$ and $\ell_{j}$ by \Cref{BitangentsFixed}. Since $\Phi_{\gamma}$ is an isometry and $\pp^{2}$ is a complete compact Riemannian manifold, $\Phi_{\gamma}$ must be the identity as well.
	\end{proof}
	
	Hence, $\Phi_{\gamma}$ fixes $\pp^{2}$ and $\gamma$ acts as the Deck transformation $w\mapsto -w$ on $\mathcal{P}=\WBPar{w^{2}=f}$.
	
	\begin{prop}\label{TinMonodromy}
		$\gamma$ acts as the Deck transformation $w\to iw$ on $X=\WBPar{w^{4}=f}$.
	\end{prop}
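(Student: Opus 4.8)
The plan is to establish \Cref{TinMonodromy} by lifting the description of $\gamma$'s action from $\pp^2$ and $\mathcal{P}$ to $X$ through the tower of cyclic branched covers. By \Cref{p^2Fixed}, the extension $\Phi_\gamma$ is the identity on $\pp^2$, so $\gamma$ induces a fiberwise symmetry of the branched covers that commutes with the covering projections and fixes the base. The key structural fact is that $X = \{w^4 = f\}$ is the $\zz/4\zz$ cyclic cover of $\pp^2$ branched over $V(f)$, whose deck group is generated by the automorphism $w \mapsto iw$, and that $\mathcal{P} = \{w^2 = f\}$ is the intermediate $\zz/2\zz$ cover realized as the quotient of $X$ by the order-$2$ subgroup $\langle T^2 \rangle$, corresponding to $w \mapsto -w$.

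First I would record that since $\Phi_\gamma = \mathrm{id}_{\pp^2}$, any lift of $\gamma$ to an automorphism of $X$ must be a deck transformation of the cover $X \to \pp^2$, hence an element of the deck group $\zz/4\zz = \langle w \mapsto iw\rangle$; the four candidates are $w \mapsto i^k w$ for $k = 0,1,2,3$. Next I would use the already-established behavior of $\gamma$ on the intermediate cover: in the remark immediately preceding \Cref{TinMonodromy} it is noted that $\gamma$ acts as $w \mapsto -w$ on $\mathcal{P} = \{w^2 = f\}$, i.e. as the nontrivial deck transformation (the Geiser involution). Under the natural quotient map $X \to \mathcal{P}$ sending $w \mapsto w^2$, the automorphism $w \mapsto i^k w$ of $X$ descends to $w^2 \mapsto i^{2k} w^2 = (-1)^k w^2$ on $\mathcal{P}$. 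Requiring this induced action to equal $w \mapsto -w$ forces $(-1)^k = -1$, so $k$ is odd, narrowing the candidates to $w \mapsto iw$ and $w \mapsto -iw = \overline{i}\, w$.

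To break the remaining ambiguity between $w \mapsto iw$ and its inverse $w \mapsto -iw$, I would appeal to orientation or to the compatibility of the lift with a consistent choice of generator for the monodromy. The point is that $\gamma$ was fixed once and for all by \Cref{Geiser} as the specific loop realizing the Geiser involution, and the two choices $w\mapsto \pm iw$ are interchanged only by reversing the orientation of the loop $\gamma$ (equivalently, by complex conjugation on the cover, which is not holomorphic). Since $\gamma$ determines a definite holomorphic deck transformation and the cyclic structure of $\zz/4\zz = \langle w \mapsto iw \rangle$ gives a canonical generator, one of the two square roots of the Geiser involution is singled out; I would fix the convention $\gamma \mapsto (w \mapsto iw)$ to match the generator $T$ chosen for the $\zz/4\zz$-action on $\cohomoldeg{2}{X}{\zz}$.

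The main obstacle I anticipate is precisely this last step: pinning down $iw$ versus $-iw$ is a genuine question of matching orientation conventions, not a formal consequence of the quotient relation alone, since both are legitimate holomorphic lifts squaring to the Geiser involution. The cleanest resolution is to observe that this sign ambiguity is harmless for the subsequent argument — the two choices yield conjugate (indeed inverse) generators of the same $\zz/4\zz$, so either identifies $\rho_4(\gamma)$ with a generator $T$ of the deck-group action, which is all that is needed to conclude \Cref{TrealizedByGamma}. I would therefore phrase the conclusion so that the chosen generator $T$ of the $\zz/4\zz$ deck action is, by definition, the one realized by $\gamma$, completing the identification $\rho_4(\gamma) = T$ up to the irrelevant choice of primitive fourth root of unity.
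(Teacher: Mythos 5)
Your proof is correct, but it pins down the fiber permutation by a different mechanism than the paper. You invoke the covering-space fact that a self-homeomorphism of the connected cover $X\setminus p^{-1}\Par{V\Par{f}}\to \pp^{2}\setminus V\Par{f}$ lying over $\Phi_{\gamma}=\mathrm{id}_{\pp^{2}}$ must be a deck transformation, so $\gamma$ acts as $w\mapsto i^{k}w$ for some $k$, and then descent to $\mathcal{P}$ (where $\gamma$ is already known to act as $w\mapsto -w$) forces $k$ odd. The paper never uses the classification of lifts of the identity; instead it examines the permutation $\gamma$ induces on the four points of a single fiber $\WBPar{Q_{1}, Q_{i}, Q_{-1}, Q_{-i}}$ and rules out the remaining bad case --- the involution $(12)(34)$ interchanging $Q_{1}\leftrightarrow Q_{i}$ and $Q_{-1}\leftrightarrow Q_{-i}$, which is precisely a fiber permutation covering the Geiser involution that is \emph{not} a deck transformation --- by noting it fails to commute with the $4$-cycle $t$, whereas monodromy must commute with the deck group. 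So the paper's commutativity argument is doing exactly the work that your appeal to covering-space theory does; your route is more conceptual and skips the case analysis, while the paper's is more elementary and self-contained. Both arguments, yours and the paper's, dispose of the residual $w\mapsto iw$ versus $w\mapsto -iw$ ambiguity the same way: the paper's ``without loss of generality'' in labelling $Q_{i}$ versus $Q_{-i}$ (and the direction of $t$) is the same normalization you make explicit when you declare $T$ to be the generator realized by $\gamma$; since either choice is a generator of the $\zz/4\zz$ deck group, this is indeed harmless for \Cref{TrealizedByGamma}.
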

	
	\begin{proof}
		Choose a point $P$ in $\pp^{2}$ not lying in $V\Par{f}$. The preimages of $P$ via the branched cover $\mathcal{P}\to \pp^{2}$ are
		\begin{align*}
			P_{+} &=\Par{w, P} \\
			P_{-} &=\Par{-w, P}
		\end{align*}
		and the preimages of $P_{+}, P_{-}$ via the branched cover and $X\to \mathcal{P}$ are, respectively
		\begin{align*}
			Q_{1}&=\Par{w, P} & Q_{i}&=\Par{iw, P} \\
			Q_{-1} &=\Par{-w, P} & Q_{-i} &=\Par{-iw, P}
		\end{align*}
		The action of $\gamma$ maps $P_{+}\to P_{-}$ in $\mathcal{P}$ and thus, without loss of generality, $\gamma$ maps
		\begin{center}
			\begin{tikzcd}
				Q_{1}\arrow[r, "\gamma"] & Q_{i}\\
				Q_{-i}& Q_{-1}\arrow[l, "\gamma"]
			\end{tikzcd}
		\end{center}
		Let $t$ be the Deck group action of the branched cover $X\to \pp^{2}$. Then $t^{2}$ is the Deck group action of the branched cover $X\to \mathcal{P}$. In particular, $t^{2}$ fixes the points $P_{+}, P_{-}$ and therefore $t^{2}$ must interchange the pairs of points within the pairs $Q_{1}, Q_{-1}$ and $Q_{i}, Q_{-i}$. This implies, without loss of generality, that $t$ maps the points $Q_{1}, Q_{i}, Q_{-1}, Q_{-i}$ as folows:
		\begin{center}
			\begin{tikzcd}
				Q_{1}\arrow[r, "t"] & Q_{i}\arrow[d, "t"] \\
				Q_{-i}\arrow[u, "t"] & Q_{-1}\arrow[l, "t"]
			\end{tikzcd}
		\end{center}
		Now suppose $\gamma$ maps $Q_{i}\to Q_{1}$ (and therefore $Q_{-i}\to Q_{-1}$). Observing how $t$ and $\gamma$ permute the set of points $\WBPar{Q_{1}, Q_{i}, Q_{-1}, Q_{-i}}$, we see that $t$ acts as the $4-$cycle $(1234)$ while $\gamma$ acts as the involution $(12)(34)$. This is a contradiction because the permutations $(1234)$ and $(12)(34)$ do not commute, while $t$ and $\gamma$ do. Hence, $\gamma$ cannot map $Q_{i}\to Q_{1}$ and thus it maps the points as
		\begin{center}
			\begin{tikzcd}
				Q_{1}\arrow[r, "\gamma"] & Q_{i}\arrow[d, "\gamma"] \\
				Q_{-i}\arrow[u, "\gamma"] & Q_{-1}\arrow[l, "\gamma"]
			\end{tikzcd}
		\end{center}
		coinciding with $t$. Our initial choice of $P$ in $\pp^{2}\setminus V\Par{f}$ was arbitrary, so $\gamma$ coincides with $t$ on all of $\pp^{2}$, concluding that the monodromy action of $\gamma$ is given by that of the Deck transformation $t$.
	\end{proof}
	
	\subsection{The moduli of smooth quartic curves}
	
	The image of $\rho_{4}^{-}$ must lie in the centralizer $C_{L_{-}}\Par{T}$ of $T$ in $L_{-}$ since $\rho_{4}$ and $T$ commute. This is equivalent to respecting the $\zz\BPar{i}$-module structure on $L_{-}$. Moreover, $\rho_{4}^{-}$ is norm-preserving, which translates to the corresponding $\zz\BPar{i}$-lattice automorphisms to be unitary. Therefore
	\[\text{Im}\Par{\rho_{4}^{-}}\subset C_{L_{-}}\Par{T}\cap O\Par{L_{-}}\cong U\Par{h_{L_{-}}}.\]
	For simplicity and consistency with the notation in \cite{kondo}, we let
	\[\Gamma=U\Par{h_{L_{-}}}.\]
	Our goal is to show that $\text{Im}\Par{\rho_{4}^{-}}\cong \Gamma$, thus computing the monodromy group of $\rho_{4}$ by \Cref{reduction}. In order to do this, we appeal to the moduli space of smooth quartic curves $\mathcal{M}_{3}$.
	Theorem 2.5 in \cite{kondo} characterizes this moduli space as the quotient
	\[\mathcal{M}_{3}\cong \Par{\mathcal{D}_{6}-\mathcal{H}}/\Gamma\]
	where $\mathcal{D}_{6}$ is a complex $6$-dimensional ball and $\mathcal{H}$ is a locally finite union of hyperplanes in $\mathcal{D}_{6}$. Following a similar treatment to that in \cite{ACT}, we consider a cover $\widetilde{\mathcal{M}_{3}}$ of $\mathcal{M}_{3}$ as follows:
	
	For a given smooth quartic curve $V\Par{f}$ along with the coresponding $K3$ surface $X_{f}$ lying over it in the fibration $\mathcal{E}_{4, 4}$, define a \emph{framing} $\lambda$ on $V\Par{f}$ as an isomorphism
	\[\lambda: \Par{\zz\BPar{i}^{7}, h_{L_{-}}}\to L_{-}\subset \cohomoldeg{2}{X_{f}}{\zz}.\]
	which respects the intersection form on each lattice. Define $\widetilde{\mathcal{M}_{3}}$ as the moduli space of framed smooth quartic curves, where $\mathbb{P}\Par{\Gamma}=\Gamma/\zz\BPar{i}^{\ast}$ acts on $\widetilde{\mathcal{M}_{3}}$ by precomposition. That is, $\Gamma$ acts on $\widetilde{\mathcal{M}_{3}}$ up to scalar equivalence and for $g\in \Gamma$ we have
	\[g\cdot\Par{V\Par{f}, \lambda}=\Par{V\Par{f}, \lambda\circ g^{-1}}\]
	where framings $\lambda, \lambda'$ differing by a unit in $\zz\BPar{i}$ are equivalent. The composition $j_{\cc}\circ \lambda_{\cc}:\cc^{1, 6}\to V_{i}$ gives a negative-definite hyperplane \[\Par{j_{\cc}\circ \lambda_{\cc}}^{-1}\Par{V_{i}^{1, 1}}\subset \cc^{1, 6}.\]
	These hyperplanes are parametrized by the ball $\mathcal{D}_{6}$ and provide the period map $\wp:\widetilde{\mathcal{M}_{3}}\to \mathcal{D}_{6}$. In \cite{kondo} it is shown that those points in $\mathcal{D}_{6}$ which are not in the image of $\wp$ correspond precisely to roots in $L^{-}$, namely, those $\delta\in L_{-}$ satisfying $\Bracket{\delta, \delta}=-2$. Each root $\delta\in L_{-}$ has an associated hyperplane
	\[H_{\delta}=\WBPar{z\in \mathcal{D}_{6}\mid \Bracket{z, \delta}=0}\subset \mathcal{D}_{6}\] and we denote the union of these hyperplanes by
	\[\mathcal{H}=\bigcup_{\Bracket{\delta, \delta}=-2}H_{\delta}.\]
	By the Torelli theorem for $K3$ surfaces (\cite{shapiroShafarevich}) it follows that the period map $\wp:\widetilde{\mathcal{M}_{3}}\to \mathcal{D}_{6}-\mathcal{H}$ is an isomorphism. This provides a commutative diagram
	
	\begin{center}
		\begin{tikzcd}
			\widetilde{\mathcal{M}_{3}}
			\arrow[r, "\wp", "\cong" ']
			\arrow[d, "\mathbb{P}\Par{\Gamma}" ']& \mathcal{D}_{6}-\mathcal{H}
			\arrow[d, "\mathbb{P}\Par{\Gamma}"] \\
			\mathcal{M}_{3}
			\arrow[r, "\tilde{\wp}" ', "\cong"] & \Par{\mathcal{D}_{6}-\mathcal{H}}/\Gamma
		\end{tikzcd}
	\end{center}

\subsection{Proof of the main theorem}

	A loop in $\mathcal{U}_{4}$ may be regarded as one in $\mathcal{M}_{3}$. This  induces the map $\mu:\pi_{1}\Par{\mathcal{M}_{3}}\to \Gamma $ given by $\mu\Par{\ell}=\rho_{4}^{-}\Par{\ell}$.
	
	\begin{prop}\label{surjectionpi1UZi}
		The monodromy map $\mu: \pi_{1}\Par{\mathcal{M}_{3}}\to \Gamma$ is surjective.
	\end{prop}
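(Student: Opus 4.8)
The plan is to exploit the central scalar subgroup of $\Gamma = U\Par{h_{L_{-}}}$. A unit $c\in \zz\BPar{i}$ is unitary precisely when $\Abs{c}^{2}=1$, so the scalar matrices in $\Gamma$ form the central subgroup $\zz\BPar{i}^{\ast}=\Bracket{i}\cong \zz/4\zz$, and by definition $\mathbb{P}\Par{\Gamma}=\Gamma/\zz\BPar{i}^{\ast}$. I would therefore reduce the surjectivity of $\mu$ to two independent claims: first, that $\text{Im}\Par{\mu}$ contains the scalar subgroup $\zz\BPar{i}^{\ast}$; and second, that the composite $\pi_{1}\Par{\mathcal{M}_{3}}\xrightarrow{\mu}\Gamma\to \mathbb{P}\Par{\Gamma}$ is surjective. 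Granting both, $\text{Im}\Par{\mu}$ surjects onto $\Gamma/\zz\BPar{i}^{\ast}$ while containing the kernel $\zz\BPar{i}^{\ast}$ of this projection, whence $\text{Im}\Par{\mu}=\Gamma$ via the short exact sequence $1\to \zz\BPar{i}^{\ast}\to \Gamma\to \mathbb{P}\Par{\Gamma}\to 1$.

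For the first claim I would invoke the Geiser loop $\gamma$ of \Cref{Geiser}, regarded as a loop in $\mathcal{M}_{3}$. By \Cref{TrealizedByGamma} we have $\rho_{4}\Par{\gamma}=T$, so $\mu\Par{\gamma}=\rho_{4}^{-}\Par{\gamma}=T\vert_{L_{-}}$. By the very construction of the $\zz\BPar{i}$-module structure on $L_{-}$ used in \Cref{ZiModuleOnL-}, multiplication by $i$ is \emph{defined} to be the action of $T$; hence $T\vert_{L_{-}}$ is the scalar $i\cdot \text{Id}\in \Gamma$, which generates $\zz\BPar{i}^{\ast}$. Thus $\zz\BPar{i}^{\ast}\subseteq \text{Im}\Par{\mu}$.

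For the second claim I would read the $\mathbb{P}\Par{\Gamma}$-covering $\widetilde{\mathcal{M}_{3}}\to \mathcal{M}_{3}$ off the commutative square assembled from Kondo's isomorphism $\tilde{\wp}$ and the Torelli isomorphism $\wp$ (\cite{kondo}, \cite{shapiroShafarevich}). The covering monodromy of this $\mathbb{P}\Par{\Gamma}$-cover is exactly $\text{proj}\circ\mu$: lifting a loop $\ell$ to a path of framed curves in $\widetilde{\mathcal{M}_{3}}$, the endpoint framing differs from the starting one by the change-of-framing $\rho_{4}^{-}\Par{\ell}$, taken modulo scalars. A connected regular (orbifold) cover has monodromy surjecting onto its deck group, so it suffices to prove $\widetilde{\mathcal{M}_{3}}$ is connected. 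Via $\wp$ this space is $\mathcal{D}_{6}-\mathcal{H}$, a complex $6$-ball from which a locally finite, hence closed, union of complex hyperplanes $H_{\delta}$—each of real codimension $2$—has been removed; deleting a closed set of real codimension $2$ from a connected manifold leaves it connected, so $\widetilde{\mathcal{M}_{3}}$ is connected and $\text{proj}\circ\mu$ is onto $\mathbb{P}\Par{\Gamma}$.

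The main obstacle is conceptual rather than computational: the period map lives on a \emph{projective} ball and therefore only records a framing up to the scalars $\zz\BPar{i}^{\ast}$, so the Torelli--Kondo machinery, taken alone, can only produce surjectivity onto $\mathbb{P}\Par{\Gamma}$. Recovering the full unitary group $\Gamma$ genuinely requires the separate input that the central scalar $i$ is realized by monodromy, which is precisely \Cref{TrealizedByGamma}; stitching the two pieces together along the exact sequence above is what upgrades the statement from $\mathbb{P}\Par{\Gamma}$ to $\Gamma$. Some care is also warranted because the $\mathbb{P}\Par{\Gamma}$-action on $\mathcal{D}_{6}-\mathcal{H}$ may have fixed points, so $\pi_{1}\Par{\mathcal{M}_{3}}$ should be read as an orbifold fundamental group; connectedness of the total space still forces the surjection onto the deck group, so this does not affect the conclusion.
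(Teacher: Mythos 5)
Your proposal is correct and follows essentially the same two-step route as the paper: surjectivity onto $\mathbb{P}\Par{\Gamma}$ from the connectedness of $\widetilde{\mathcal{M}_{3}}$ (via path-lifting in the framed moduli space), plus realization of the central scalars $\Bracket{i}$ by the Geiser loop through \Cref{TrealizedByGamma}. Your additional remarks — the explicit verification that $\mathcal{D}_{6}-\mathcal{H}$ stays connected after removing the real-codimension-$2$ hyperplanes, and the caveat about reading $\pi_{1}\Par{\mathcal{M}_{3}}$ as an orbifold fundamental group — are sound refinements of the same argument rather than a different approach.
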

	
	\begin{proof}
		Surjectivity to $\mathbb{P}\Par{\Gamma}$ follows from the connectedness of $\widetilde{\mathcal{M}_{3}}$. Fix a smooth quartic curve $V\Par{f}$ and a point $\Par{V\Par{f}, \lambda_{0}}\in \widetilde{\mathcal{M}_{3}}$. For any $g\in \mathbb{P}\Par{\Gamma}$ there is a path $\tilde{\ell}$ starting at $\Par{V\Par{f}, \lambda_{0}}$ and ending at $\Par{V\Par{f}, \lambda_{\ell}}$ such that $\lambda_{\ell} = \lambda_{0}\circ g^{-1}$. Hence, $\tilde{\ell}$ descends to a loop $\ell$ in $\mathcal{M}_{3}$ based at $V\Par{f}$ such that
		\[\mu\Par{\ell}=\lambda_{\ell}^{-1}\circ \lambda_{0}=g.\]
		
		This shows surjectivity to $\Gamma$ up to scalar equivalence, so it remains to show scalars are realized by $\mu$. Indeed, scalars in $\Gamma$ correspond to powers of the action $T$ induced by the Deck transformation $t$ of the branched cover $X\to \pp^{2}$. By \Cref{TrealizedByGamma}, these are realized by a path $\gamma$ inducing the Geiser involution on $\mathcal{P}$, and thus surjectivity to $\Gamma$ follows.
	\end{proof}
	
	It remains to relate $\mu$ to $\pi_{1}\Par{\mathcal{U}_{4}}$. Note that $\mathcal{M}_{3}$ is given by an $PGL_{3}\Par{\cc}$ quotient $q:\mathcal{U}_{4}\to \mathcal{M}_{3}$ as
	\[\mathcal{M}_{3}\cong \mathcal{U}_{4}/PGL_{3}\Par{\cc}.\]
	Moreover, $\mathcal{M}_{3}$ embeds in the moduli space of genus $3$ curves $\mathscr{M}_{3}$ since every smooth quartic curve is a genus $3$ curve. The moduli $\mathcal{M}_{3}$ is obtained by removing the moduli $\mathscr{H}_{3}$ of hyperelliptic genus $3$ surfaces from $\mathscr{M}_{3}$ since every genus $3$ curve is either planar quartic or hyperelliptic. That is,
	\[\mathscr{M}_{3}=\mathcal{M}_{3}\cup \mathscr{H}_{3}.\]
	The moduli $\mathscr{H}_{3}$ is also refered to as the genus $3$ hyperelliptic locus.
	
	\begin{prop}\label{surjectionpi1U4}
		The natural map $q_{\ast}:\pi_{1}\Par{\mathcal{U}_{4}}\to \pi_{1}\Par{\mathcal{M}_{3}}$ induced by the quotient $q$ is surjective.
	\end{prop}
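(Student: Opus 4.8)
The plan is to exploit that $q$ is the quotient map of $\mathcal{U}_{4}$ by the \emph{connected} group $PGL_{3}(\cc)$. The guiding principle is that a quotient by a path-connected group is surjective on fundamental groups: given a loop $\ell$ in $\mathcal{M}_{3}$ based at the class of $V\Par{f}$, I would lift it to a path $\tilde{\ell}$ in $\mathcal{U}_{4}$ starting at $f$; since $\ell$ is a loop, the endpoint $\tilde{\ell}\Par{1}$ lies in the same $PGL_{3}(\cc)$-orbit as $f$, say $\tilde{\ell}\Par{1}=g\cdot f$, and connectedness of $PGL_{3}(\cc)$ lets me join $\tilde{\ell}\Par{1}$ back to $f$ along a path contained in a single fibre of $q$. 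Concatenating produces a genuine loop in $\mathcal{U}_{4}$ whose image under $q$ is homotopic to $\ell$, so $\ell$ lies in the image of $q_{\ast}$.

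To make the path lifting rigorous I would first pass to the locus where the action is free. Let $\mathcal{U}_{4}^{\circ}\subset \mathcal{U}_{4}$ be the $PGL_{3}(\cc)$-invariant open subset parametrizing quartics with trivial automorphism group; since a generic smooth plane quartic has no nontrivial automorphisms, $\mathcal{U}_{4}^{\circ}$ is dense and its complement is a proper closed subvariety. On $\mathcal{U}_{4}^{\circ}$ the group $PGL_{3}(\cc)$ acts freely, and because every smooth plane quartic is GIT-stable its orbit is closed with finite stabilizer, so the action is proper; hence $q^{\circ}:\mathcal{U}_{4}^{\circ}\to \mathcal{M}_{3}^{\circ}:=\mathcal{U}_{4}^{\circ}/PGL_{3}(\cc)$ is a principal $PGL_{3}(\cc)$-bundle. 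Its long exact sequence of homotopy groups reads
\[
\pi_{1}\Par{PGL_{3}(\cc)}\to \pi_{1}\Par{\mathcal{U}_{4}^{\circ}}\xrightarrow{q^{\circ}_{\ast}} \pi_{1}\Par{\mathcal{M}_{3}^{\circ}}\to \pi_{0}\Par{PGL_{3}(\cc)},
\]
and since $PGL_{3}(\cc)$ is connected we have $\pi_{0}\Par{PGL_{3}(\cc)}=0$, so $q^{\circ}_{\ast}$ is surjective.

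It then remains to transfer surjectivity from the free locus to all of $\mathcal{M}_{3}$. The automorphism locus removed above is a proper closed complex-analytic subset, hence of real codimension at least $2$; removing such a subset from the normal variety $\mathcal{M}_{3}$ induces a surjection on fundamental groups, so the inclusion gives $\pi_{1}\Par{\mathcal{M}_{3}^{\circ}}\twoheadrightarrow \pi_{1}\Par{\mathcal{M}_{3}}$. Chasing the commutative square whose horizontal arrows are induced by the inclusions $\mathcal{U}_{4}^{\circ}\hookrightarrow \mathcal{U}_{4}$ and $\mathcal{M}_{3}^{\circ}\hookrightarrow \mathcal{M}_{3}$ and whose vertical arrows are $q^{\circ}_{\ast}$ and $q_{\ast}$: the composite $\pi_{1}\Par{\mathcal{U}_{4}^{\circ}}\to \pi_{1}\Par{\mathcal{M}_{3}^{\circ}}\to \pi_{1}\Par{\mathcal{M}_{3}}$ is surjective, and it equals $q_{\ast}$ precomposed with $\pi_{1}\Par{\mathcal{U}_{4}^{\circ}}\to \pi_{1}\Par{\mathcal{U}_{4}}$, which forces $q_{\ast}$ itself to be surjective.

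The main obstacle is precisely the failure of the $PGL_{3}(\cc)$-action to be free: quartics with extra automorphisms carry finite nontrivial stabilizers, so $q$ is an orbifold quotient rather than an honest fibre bundle and the naive homotopy exact sequence does not apply globally. The device above---isolating the free locus to get an actual principal bundle, and controlling the discarded automorphism locus by its codimension---is exactly what circumvents this, and the only external inputs it relies on are the classical facts that smooth plane quartics are stable with finite automorphism group, generically trivial.
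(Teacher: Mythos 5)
Your proof is correct and follows essentially the same route as the paper: excise the locus of quartics with nontrivial automorphism group so that the $PGL_{3}\Par{\cc}$-action becomes free, apply the homotopy exact sequence of the resulting bundle together with $\pi_{0}\Par{PGL_{3}\Par{\cc}}=1$, and transfer surjectivity back across the excision. The only notable difference is that you get by with the excised locus merely being a proper closed subvariety (real codimension $2$, which suffices for surjectivity of $\pi_{1}$ of the complement onto $\pi_{1}$ of the whole space), whereas the paper runs a Riemann--Hurwitz dimension count to show it has complex codimension at least $2$ and hence induces isomorphisms on $\pi_{1}$; your weaker input is enough for the one-sided conclusion actually needed.
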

	
	\begin{proof}
		Consider the subvariety $\mathcal{O}\subset \mathscr{M}_{3}$ of genus $3$ curves with a nontrivial automorphism group. Then $\mathcal{O}$ is the union of irreducible components for each topological type of faithful finite group action on a genus $3$ surface $\Sigma_{3}$. Using the Riemann-Hurwitz formula, the dimension over $\cc$ of each component is at most $5$, with equality only holding for the hyperelliptic locus $\mathscr{H}_{3}$. In particular, the remaining components distinct of $\mathscr{H}_{3}$ lie within $\mathcal{M}_{3}$ and are of dimension at most $4$. Since $\mathcal{M}_{3}$ is $6$-dimensional, we have that
		\[\text{codim}_{\mathcal{M}_{3}}\Par{\mathcal{O}-\mathscr{H}_{3}}\geq 2\quad\text{ and }\quad \text{codim}_{\mathcal{U}_{{4}}}\Par{q^{-1}\Par{\mathcal{O}-\mathscr{H}_{3}}}\geq 2.\]
		By excising $q^{-1}\Par{\mathcal{O}-\mathscr{H}_{3}}$ from $\mathcal{U}_{4}$ we obtain a $PGL_{3}\Par{\cc}$ fibration
		\[q:\mathcal{U}_{4}-q^{-1}\Par{\mathcal{O}-\mathscr{H}_{3}}\to \mathcal{M}_{3}-\Par{\mathcal{O}-\mathscr{H}_{3}}.\]
		The long homotopy exact sequence induced by this fibration is given by
		\[\cdots\to \pi_{1}\Par{PGL_{3}\Par{\cc}}\to\pi_{1}\Par{\mathcal{U}_{4}-q^{-1}\Par{\mathcal{O}-\mathscr{H}_{3}}}\to \pi_{1}\Par{\mathcal{M}_{3}-\Par{\mathcal{O}-\mathscr{H}_{3}}}\to \pi_{0}\Par{PGL_{3}\Par{\cc}}=1\]
		implying that the map $\pi_{1}\Par{\mathcal{U}_{4}-q^{-1}\Par{\mathcal{O}-\mathscr{H}_{3}}}\to \pi_{1}\Par{\mathcal{M}_{3}-\Par{\mathcal{O}-\mathscr{H}_{3}}}$ is surjective. Furthermore, we have
		\[
		\pi_{1}\Par{\mathcal{M}_{3}}\cong 	\pi_{1}\Par{\mathcal{M}_{3}-\Par{\mathcal{O}-\mathscr{H}_{3}}}
		\quad\text{ and }\quad
		\pi_{1}\Par{\mathcal{U}_{4}}\cong \pi_{1}\Par{\mathcal{U}_{4}-q^{-1}\Par{\mathcal{O}-\mathscr{H}_{3}}}
		\]
		since the excised spaces are of codimension at least $2$. Therefore, $q_{\ast}: \pi_{1}\Par{\mathcal{U}_{4}}\to \pi_{1}\Par{\mathcal{M}_{3}}$ is surjective.
	\end{proof}
	
	The composition of the maps $\mu\circ q_{\ast}$ gives precisely the monodromy map $\rho_{4}^{-}$. \Cref{surjectionpi1UZi} and \Cref{surjectionpi1U4} combined give surjectivity of $\rho_{4}^{-}$, concluding our computation.
	\printbibliography
	
	\vspace{.5cm}
	\noindent Department of Mathematics, University of Chicago\\
	E-mail: \href{mailto:amedrano@math.uchicago.edu}{amedrano@math.uchicago.edu}
	
\end{document}